\documentclass{amsart}
\usepackage{amsmath,amscd,amssymb,amsthm}
\usepackage{amsfonts}
\usepackage{mathabx}
\usepackage[all]{xy}
\usepackage{mathtools}
\usepackage{enumerate}

\usepackage{enumitem}

\usepackage{hyperref}

\usepackage{adjustbox}

\hypersetup{
  pdftitle={Weak and genuine An-formality for number fields},
  pdfauthor={Ambrus Pál and Gereon Quick}
}

\usepackage[usenames, dvipsnames]{color}

\definecolor{NTNUblue}{RGB}{0,80,158}
\definecolor{NTNUbluesupport}{RGB}{62,98,138}
\definecolor{NTNUorange}{RGB}{239,129,20}

\usepackage[OT2,T1]{fontenc}
\DeclareSymbolFont{cyrletters}{OT2}{wncyr}{m}{n}
\DeclareMathSymbol{\Sha}{\mathalpha}{cyrletters}{"58}

\newcommand{\Fp}{\mathbb F_p}

\newcommand{\HH}{\operatorname{HH}}
\newcommand{\Frob}{\operatorname{Frob}}
\newcommand{\Gal}{\operatorname{Gal}}
\newcommand{\Gov}{\operatorname{Gov}}
\newcommand{\Nm}{\operatorname{N}}
\newcommand{\Aut}{\operatorname{Aut}}
\newcommand{\Hom}{\operatorname{Hom}}

\newcommand{\im}{\operatorname{im}}
\newcommand{\coker}{\operatorname{coker}}
\newcommand{\res}{\operatorname{res}}

\newcommand{\angles}[1]{\left\langle #1\right\rangle}

\newcommand{\loc}{\operatorname{loc}}

\newcommand{\Un}{{\mathbb U}_n}

\newcommand{\cd}{\operatorname{cd}}

\newcommand{\ob}{\mathfrak o}

\newcommand{\qq}{\mathfrak q}

\newcommand{\Br}{\operatorname{Br}}
\newcommand{\Sol}{\operatorname{Sol}}

\newcommand{\uA}{\mathfrak u}

\newcommand{\UA}{\mathbb U}
\newcommand{\Krel}{K}

\newcommand{\MD}{M'}

\newcommand{\into}{\hookrightarrow}

\newcommand{\xto}{\xrightarrow}

\newcommand{\F}{\mathbb{F}}

\newcommand{\Q}{\mathbb{Q}}
\newcommand{\R}{\mathbb{R}}
\newcommand{\Z}{\mathbb{Z}}

\newcommand{\Ch}{\mathcal{C}}

\newcommand{\Fh}{\mathcal{F}}

\newcommand{\Zh}{\mathcal{Z}}

\newcommand{\ann}{\mathrm{an}}

\newcommand{\gHom}{\underline{\Hom}}

\newcommand{\cont}{\mathrm{cont}}

\newcommand{\id}{\mathrm{id}}

\newcommand{\bbb}{\bullet}

\newcommand{\Cb}{\Ch^\bbb}

\newcommand{\Zb}{\Zh^\bbb}

\newcommand{\Hb}{H^\bbb}

\newcommand{\dee}{\partial}

\newcommand{\One}{\mathbf{1}}

\newcommand{\ot}{\otimes}

\newcommand{\rr}{\rho} 

\newcommand{\brr}{\overline{\rho}}

\theoremstyle{plain}

\newtheorem{theorem}{Theorem}[section]

\newtheorem{proposition}[theorem]{Proposition}
\newtheorem{lemma}[theorem]{Lemma}

\newtheorem{corollary}[theorem]{Corollary}

\theoremstyle{definition}
\newtheorem{definition}[theorem]{Definition}

\newtheorem{example}[theorem]{Example}

\newtheorem{notn}[theorem]{Notation}

\theoremstyle{remark}

\newtheorem{remark}[theorem]{Remark}

\title{Weak and genuine $A_{\lowercase{n}}$-formality for number fields}

\author{Ambrus P\'al}
\address{Mathematical Institute, E\"otv\"os Lor\'and University, H-1117 Budapest, Hungary; 
\newline \hspace*{3.2mm} 
Department of Mathematical Sciences, UAEU,  UAE}
\email{ambrus.pal@uaeu.ac.ae}
\author{Gereon Quick}
\address{Department of Mathematical Sciences, NTNU, NO-7491 Trondheim, Norway}
\email{gereon.quick@ntnu.no}
\thanks{Both authors were partially supported by %RCN Project No.\,313472 {\it Equations in Motivic Homotopy}, and   
the project \emph{Pure Mathematics in Norway} funded by the Trond Mohn Foundation.}

\subjclass{11R34, 55S30, 12G05, 16E40.} 

\begin{document}

\begin{abstract}
We introduce weak $A_n$-formality for Galois cochain algebras 
and, for profinite groups whose mod-$p$ cohomology is concentrated in degrees at most two, 
characterise weak and genuine $A_n$-formality 
through finite embedding problems and the projective system they form. 
We then study $A_n$-formality for number fields. 
Let $K$ be a number field and let $p$ be an odd prime
with $\mu_p\not\subset K$. For $p\geq 5$, and for $p=3$ under a natural
local cyclotomic condition, we prove that every associated finite 
$A_3$-embedding problem is solvable. 
In contrast, for every odd $p$ with $\mu_p\not\subset K$, 
we show that no compatible family of finite-stage 
solutions exists, and hence $C^\bullet(G_K,\mathbb F_p)$ is not
genuinely $A_3$-formal. 
The proof uses Chebotarev density, governing fields, and the Gras--Munnier criterion
to obstruct the compatibility of finite-stage solutions, 
and it builds on previous work of Maire--Min\'a\v c--Ramakrishna--T\^an. 
This shows that genuine $A_3$-formality detects a global compatibility obstruction for number fields 
invisible to every individual finite embedding problem and 
invisible to obstructions arising from individual strong Massey vanishing problems. 
\end{abstract}

\maketitle
%\end{nouppercase}

%%%%%%%%%%

\section{Introduction}

Let $F$ be a field, let $G_F$ be its absolute Galois group, and let $p$ be a prime. 
The differential graded algebra   $\Cb(G_F,\F_p)$ 
of continuous inhomogeneous cochains 
contains more information than its cohomology ring $\Hb(G_F,\F_p)$. 
Formality asks whether this extra information can be discarded, up to quasi-isomorphism. 
This question is closely related to the behaviour of Massey products.
In \cite{HW}, Hopkins and Wickelgren proved that all defined triple Massey products
over local and global fields vanish at the prime $p=2$, 
and asked whether this phenomenon reflects formality of the Galois cochain
algebra. 
While Positselski \cite{PoMoscow, Po} showed 
that $\Cb(G_F,\F_2)$ is not formal in general for  local fields, 
Min\'a\v c and T\^an conjectured in \cite[Conjecture 1.6]{MT2} that 
the computation of Hopkins--Wickelgren generalises and all fields satisfy $n$-Massey vanishing with respect to $p$, 
i.e., all $n$-fold Massey products of elements in $H^1(G_F,\F_p)$ vanish whenever they are defined. 
In fact, by the work of % Matzri \cite{Matzri}, 
Efrat--Matzri \cite{EM17} and Min\'a\v c--T\^an \cite{MT1}, 
all fields satisfy triple Massey vanishing with respect to all primes. 
For a more recent, independent proof, see \cite{Lam_etal}. %\cite[Section 6.2]{Lam_etal}. 
In \cite{HarpazWittenberg}, Harpaz--Wittenberg showed that number fields satisfy $n$-Massey vanishing with respect to all primes. 
More recently, Merkurjev--Scavia proved in \cite{MS2} that all fields satisfy fourfold Massey vanishing with respect to $p=2$. 
Other cases of the conjecture have been proven in \cite{MTE}, \cite{PQ}, \cite{PSz}, and \cite{Quadrelli}. 
Maire--Min\'a\v c--Ramakrishna--T\^an 
\cite{MMRT} show that number fields not containing all $p$-th roots of unity % checked and updated from arXiv-version 
even satisfy a strong form of $n$-Massey vanishing. 
However, Galois cochain algebras need not be formal at $p=2$. 
Examples of Harpaz and
Wittenberg \cite[Example A.15]{GMT}, 
and later Merkurjev and Scavia in \cite{MS1,MSNF} % \cite[Theorem 1.6]{MS1} and \cite[Theorem 1.3]{MS2023}, 
show that fourfold Massey products need not even be defined when all neighbouring cup
products vanish.  

Formality of a differential graded algebra $\Cb$ is equivalent to the vanishing of the higher operations 
of its transferred minimal \(A_\infty\)-structure on its cohomology algebra $\Hb$. 
This allows one to study formality one operation at a time, 
and  $\Cb$ is $A_n$-formal if the transferred minimal structure can be chosen so that the higher operations vanish up to level $n$. 
In this paper, we propose to study a weaker version of $A_n$-formality, 
called {\em weak $A_n$-formality}, 
which requires the vanishing of all higher operations up to level $n$ 
after restriction to every finite-dimensional vector subspace $V \subset H^1$, 
see Section \ref{sec:weak_vs_genuine_An_general} for a precise definition. 
Genuine $A_n$-formality implies weak $A_n$-formality. 
Under the assumption $H^i=0$ for $i \ge 3$, the converse holds 
if the trivialisations on each finite-dimensional subspace $V$ can be chosen compatibly. 
The main purpose of this paper is to show that this compatibility
problem provides %is a genuine 
a new arithmetic obstruction for number fields. % for $n=3$. 
%
%%%%%%%%%

\subsection{Main results} 

Our first main result is a reformulation of $A_n$-formality for the cochain algebra $\Cb(G,F_p)$ of a profinite group $G$ 
in terms of the existence of compatible solutions of certain embedding problems for $G$. 
More concretely, for every $n\ge 2$ and every vector subspace $V \subseteq H^1(G,\F_p)$, 
we construct a profinite group $\UA_n(V)$ arising as the Pontryagin dual of a construction 
similar to the Koszul complex of a quadratic algebra; see Section \ref{sec:finite-embedding-comparison}. 
We then obtain an embedding problem 
$\mathbf E^{(n)}_V \colon G\xrightarrow{\chi_V} V^\vee \twoheadleftarrow\UA_n(V)$ 
of profinite groups where %$V^\vee$ is the Pontryagin dual and 
$\chi_V$ is a canonically defined character. 
Our main technical result on $A_n$-formality is the following characterisation in terms of projective systems of finite embedding problems, 
which may be of independent interest, 
see Theorem \ref{thm:compatible-defining-systems}. 
\begin{theorem}
\label{thm:compatible-defining-systems_intro}
Let $n \ge 3$, and let $G$ be a profinite group such that $H^i(G,\F_p) = 0$ for $i\ge3$.  
Then $\Cb(G,\F_p)$ is $A_n$-formal 
if and only if 
all embedding problems $\mathbf E^{(n)}_V$ for finite-dimensional $V \subseteq H^1(G,\F_p)$ 
admit a compatible family of solutions. 
\end{theorem}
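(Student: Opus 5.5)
The plan is to translate $A_n$-formality into a statement about lifting a homomorphism of profinite groups, and to check compatibility one finite-dimensional subspace at a time. Since $H^i(G,\F_p)=0$ for $i\ge 3$, the transferred minimal $A_\infty$-structure $\{m_k\}$ on $\Hb=\Hb(G,\F_p)$ only has nontrivial components $m_k\colon (H^1)^{\otimes k}\to H^2$ for $k\ge 3$. These come from a choice of homotopy data, i.e.\ a splitting of $\Cb$ into cocycle representatives plus a contracting homotopy. Hence $A_n$-formality amounts to choosing a ``defining system'' on $H^1$: a family of cochains $\varphi_k\colon (H^1)^{\otimes k}\to C^1(G,\F_p)$ for $1\le k\le n-1$, with $\varphi_1$ a choice of cocycle representatives. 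These must satisfy the Maurer--Cartan-type equations $d\varphi_k=-\sum_{i+j=k}\mu(\varphi_i\otimes\varphi_j)$ modulo the products that are allowed to be nonzero in $\Hb$, up to the level where the $m_k$, $k\le n$, vanish.

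First I will make this precise. The vanishing of $m_3,\dots,m_n$, up to $A_\infty$-isomorphism of minimal models, is equivalent to the existence of a $\varphi=(\varphi_1,\dots,\varphi_{n-1})$ on all of $H^1$ that solves the truncated Maurer--Cartan equation. The relations are taken in the quadratic dual sense: the cup product relations on $H^1$ are exactly the kernel $R\subset H^1\otimes H^1$ of $\cup$, and the equations only need to hold on the Koszul-type pieces $R_k=\bigcap_{i}(H^1)^{\otimes i}\otimes R\otimes (H^1)^{\otimes k-i-2}$. The condition $H^{\ge 3}=0$ is used here: there are no obstruction classes in degree $\ge 3$ for the higher homotopies, so no extra conditions appear.

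Second, I identify such data with homomorphisms. By construction, $\UA_n(V)$ is the Pontryagin dual of the truncated Koszul-like coalgebra built from $V$ and $R\cap (V\otimes V)$. So a continuous homomorphism $G\to\UA_n(V)$ lifting $\chi_V\colon G\to V^\vee$ is exactly a family of $1$-cochains $\varphi_k|_{V^{\otimes k}}$ satisfying the Maurer--Cartan identities. This is the same dictionary as between Massey defining systems and unipotent representations (Dwyer), carried out uniformly in $V$. The step requires checking that the group law on $\UA_n(V)$ encodes $d\varphi_k+\sum\varphi_i\cup\varphi_j=0$ on the right pieces. This gives weak $A_n$-formality for $V$ if and only if $\mathbf E^{(n)}_V$ is solvable.

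Third, I pass to the limit. For $V\subseteq W$ there are restriction maps $\UA_n(W)\to\UA_n(V)$ compatible with $\chi$, and solutions restrict. A compatible family $(\psi_V)$ over the directed set of finite-dimensional $V$ glues to a continuous homomorphism $G\to\varprojlim_V\UA_n(V)=\UA_n(H^1)$. Dually, this gives a defining system on all of $H^1$ as a direct limit of compatible cochains, and hence genuine $A_n$-formality. Conversely, a global $\varphi$ restricts to a compatible family.

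The main obstacle is the first step. One has to show that genuine $A_n$-formality (vanishing up to $A_\infty$-isomorphism, not only for a fixed transfer) is equivalent to the existence of a global Maurer--Cartan solution. This needs an inductive argument that the freedom in choosing $\varphi_k$ is exactly changing the minimal model by an $A_\infty$-isomorphism, and that this holds in the infinite-dimensional case with continuous cochains. I would handle it by an induction on $k$ with a chosen splitting of $Z^1\to H^1$, using that $m_k$ vanishing at stage $k$ is a cohomological condition on $R_k$.
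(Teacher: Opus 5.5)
Your proposal is correct and follows essentially the same route as the paper. The paper shows, for an arbitrary (possibly infinite-dimensional) $V\subseteq H^1$, that vanishing of $m_3,\dots,m_n$ on $A_V$ is equivalent to the existence of Maurer--Cartan cochains $b_1=\chi_V,b_2,\dots,b_n$, and hence to solvability of $\mathbf E^{(n)}_V$ (Theorem~\ref{thm:finite-lifting}, proved by the induction in Proposition~\ref{prop:defining-system-comparison} together with the Hochschild comparison of Theorem~\ref{thm:HHcomparison}, which is exactly your ``cohomological condition on the Koszul pieces''); it then glues a compatible family through $\UA_n(H^1)\cong\varprojlim_V\UA_n(V)$ and applies this with $V=H^1$. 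The only slip is your indexing: vanishing of $m_n$ needs defining-system components up to $k=n$, one for each graded piece $P_1(V),\dots,P_n(V)$ of $\UA_n(V)$ as in your own second step, not only $k\le n-1$.
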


We use Theorem \ref{thm:compatible-defining-systems_intro} 
to prove the following local--global principle for number fields, 
where $\mu_q$ denotes the group of $q$-th roots of unity, 
see Theorem \ref{thm:LG-equivalence}. 

\begin{theorem}
\label{thm:LG-equivalence_intro}
Let $K$ be a number field, let $p$ be odd, assume
$\mu_p\not\subset K$, 
and let $n\ge3$.  
Then the following are equivalent: 
\begin{enumerate}[label=\textnormal{(\roman*)}]
\item $\Cb(G_K,\Fp)$ is weakly $A_n$-formal in degree one, 
\item for every place $v$ of $K$, % the local cochain dga
$\Cb(G_{K_v},\Fp)$ is $A_n$-formal.
\end{enumerate}
\end{theorem}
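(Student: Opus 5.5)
We will prove both implications by translating everything into finite embedding problems via Theorem~\ref{thm:compatible-defining-systems_intro} and its finite-stage version. The relevant cohomological facts are these. For $p$ odd, $G_K$ has $p$-cohomological dimension two, since the real places are invisible at odd $p$. Each local group $G_{K_v}$ also has $\cd_p\le 2$. Moreover, $H^1(G_{K_v},\Fp)$ is finite-dimensional, so for the local groups genuine and weak $A_n$-formality coincide. The local groups are therefore covered directly by Theorem~\ref{thm:compatible-defining-systems_intro}: $\Cb(G_{K_v},\Fp)$ is $A_n$-formal if and only if the single embedding problem $\mathbf E^{(n)}_{H^1(G_{K_v})}$ is solvable, and this in turn is equivalent to solvability of $\mathbf E^{(n)}_W$ for every $W\subseteq H^1(G_{K_v},\Fp)$.

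\emph{(i)$\Rightarrow$(ii).} Fix $v$ and a finite-dimensional $W\subseteq H^1(G_{K_v},\Fp)$. Since $\mu_p\not\subset K$, a Grunwald--Wang type argument for odd $p$ gives surjectivity of $H^1(G_K,\Fp)\to H^1(G_{K_v},\Fp)$. So we can choose a finite-dimensional $V\subseteq H^1(G_K,\Fp)$ that maps onto $W$. Weak $A_n$-formality in degree one gives a solution of $\mathbf E^{(n)}_V$, namely a lift $G_K\to\UA_n(V)$. We restrict this lift to the decomposition group $G_{K_v}$ and push it out along the functorial map $\UA_n(V)\to\UA_n(W)$ induced by $W^\vee\hookrightarrow V^\vee$. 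Here we must check that $\UA_n(-)$ is contravariantly functorial in $V$ compatibly with $\chi$. The result is a solution of $\mathbf E^{(n)}_W$ for $G_{K_v}$, which gives (ii).

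\emph{(ii)$\Rightarrow$(i).} Fix a finite-dimensional $V$. The kernel of $\UA_n(V)\to V^\vee$ is a finite $p$-group filtered by central $\Fp$-layers, so we solve the embedding problem one layer at a time. Each layer is a central embedding problem whose obstruction is a class in $H^2(G_K,\Fp)$, or rather in $H^2$ with coefficients a finite trivial module. By hypothesis each local restriction of this obstruction vanishes. The global obstruction then dies provided $\Sha^2(G_K,\Fp)=0$. By Poitou--Tate, $\Sha^2(\Fp)$ is dual to $\Sha^1(\mu_p)$, and $\Sha^1(\mu_p)$ vanishes when $\mu_p\not\subset K$ and $p$ is odd, for example via $K(\mu_p)/K$ having prime-to-$p$ degree and Chebotarev.

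The main obstacle is the higher layers. After the first layer, the lift is only determined up to $H^1$-twists, and the local solutions that exist need not be restrictions of the chosen global partial lift. At each stage we must therefore modify the global partial lift by a class in $H^1(G_K,\cdot)$ so that the next obstruction is again locally trivial everywhere, and only finitely many places are relevant. We would handle this with the same device as in \cite{MMRT}: since $\mu_p\not\subset K$, Chebotarev supplies auxiliary primes, and the localisation map from $H^1(G_K)$ to any finite product of local $H^1$'s is surjective. Using this, we twist the partial lift so that it matches chosen local solutions at the finitely many bad places (those above $p$, those where the partial lift ramifies, and the auxiliary primes). The remaining places are unramified, where lifting is automatic. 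Induction over the layers then gives a solution of $\mathbf E^{(n)}_V$. This proves (i), but only for each $V$ separately, which is exactly weak formality; compatibility across different $V$ is not claimed.
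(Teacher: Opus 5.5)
\paragraph{The gap is in (i)$\Rightarrow$(ii).}
The map you push out along does not exist. Functoriality of $\UA_n(-)$ runs the other way. Localisation $\loc_v\colon V\to W$ sends $R_V$ into $R_W$, hence $K_m(V)$ into $K_m(W)$, and dualising gives $\UA_n(W)\to\UA_n(V)$ over $W^\vee\hookrightarrow V^\vee$. This is the left square of \eqref{eq:diagram_embedding_problems}, and it cannot turn a $V$-solution into a $W$-solution.

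A homomorphism $\UA_n(V)\to\UA_n(W)$ over $V^\vee\to W^\vee$ would need a section $s\colon W\to V$ of $\loc_v$ with $(s\otimes s)(R_W)\subseteq R_V$. Equivalently, take $V'\subseteq V$ mapping isomorphically onto $W$. Then $R_{V'}\subseteq R_W$ may be a proper inclusion. In that case restricting a global solution to $G_{K_v}$ only solves the embedding problem for the proper quotient $\UA_n(V')$ of $\UA_n(W)$, not $\mathbf E^{(n)}_W$ itself.

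Surjectivity of $H^1(G_K,\Fp)\to H^1(G_{K_v},\Fp)$ gives no control here. If $w\cup w'=0$ in $H^2(G_{K_v},\Fp)$, arbitrary global lifts $a,a'$ only satisfy $\loc_v(a\cup a')=0$, while $a\cup a'$ can be nonzero at other places. For odd $p$ the symmetric part of $R_W$ lifts automatically, but the antisymmetric local relations do not.

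\paragraph{The missing ingredient.}
This direction needs the relation-preserving approximation of Proposition~\ref{prop:relation-approx}.
\begin{itemize}
\item Lift a basis $w_1,\dots,w_d$ of $W$ inductively, via \cite[Proposition 3.1]{MMRT}, to classes $a_i$ ramified only at $v$ and one new tame prime $q_i$.
\item Choose $q_i$ to split completely in the extension cut out by $a_1,\dots,a_{i-1}$, and require $(a_i)_{q_j}=0$ for $j<i$.
\item Then every $a_i\cup a_j$ is locally trivial at all places other than $v$. At each $q_s$ at most one $a_i$ is nonzero and $a_s\cup a_s=0$; elsewhere all classes are unramified.
\item Since $\Sha^2(K,\Fp)=0$, a combination of cup products vanishes globally if and only if it vanishes at $v$.
\end{itemize}
This gives $R_V\cong R_W$, hence $\UA_n(W)\cong\UA_n(V)$. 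Only then does restricting a global solution to $G_{K_v}$ solve $\mathbf E^{(n)}_W$. The paper argues by contradiction with $W=H^1(G_{K_v},\Fp)$, but the mechanism is the same. You invoke $\Sha^2(K,\Fp)=0$ only in the other direction, yet it is exactly what is needed here.

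\paragraph{The direction (ii)$\Rightarrow$(i).}
Your layer-by-layer argument re-derives by hand what the paper gets by citing \cite[Theorem 3.2]{MMRT}, with the local solutions supplied by Lemma~\ref{lem:local-plans-general}. As a sketch this is acceptable. The delicate point is local solvability at the auxiliary primes introduced by your twists, because the twisted partial lift becomes ramified there. Those primes must be chosen as in \cite[Proposition 3.1]{MMRT}, which is precisely what the cited lifting theorem handles.
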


%%%%%%%%%%%%%

We apply the local--global principle to the case $n=3$, 
where our complete computation of the local obstruction class applies \cite{PQodd}. 
Weak $A_3$-formality already implies the cup-defining fourfold Massey product property, 
i.e., the fourfold Massey product of classes in $H^1$ is defined 
whenever the neighbouring cup-products vanish. 
For a number field $K$ and an odd prime $p$ such that $\mu_p\not\subset K$, 
Maire--Min\'a\v c--Ramakrishna--T\^an show in \cite{MMRT} 
that $\Cb(G_K,\F_p)$ satisfies strong Massey vanishing. 
Hence all obstructions arising from individual strong Massey problems vanish. 
The main results of this paper, however, 
demonstrate that the distinction between weak and genuine formality 
is even more subtle for number fields % at odd primes.  
as we will now describe. 
Our first key result is the following, see Theorem \ref{thm:numberfield-absolute-trim}. 

\begin{theorem}
\label{thm:intro-weak}
Let $K$ be a number field, let $p$ be an odd prime number, and assume $\mu_p\not\subset K$.  
For $p=3$, we assume that, for every place $v$ of $K$, 
if  $\mu_3 \subset K_v$ then $\mu_9 \subset K_v$. 
Then $\Cb(G_K,\F_p)$ is weakly $A_3$-formal.  
\end{theorem}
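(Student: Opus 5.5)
The plan is to deduce the statement from the local--global principle, Theorem \ref{thm:LG-equivalence_intro}, with $n=3$, together with the computation of the local $A_3$-obstruction in \cite{PQodd}. Since $K$ is a number field and $p$ is odd with $\mu_p\not\subset K$, Theorem \ref{thm:LG-equivalence_intro} says that weak $A_3$-formality (in degree one, which is where the obstruction lives once $H^i=0$ for $i\ge 3$) is equivalent to $A_3$-formality of $\Cb(G_{K_v},\Fp)$ for every place $v$. So the whole argument reduces to a place-by-place verification, which I split into three cases. I also need to check that ``weakly $A_3$-formal in degree one'' agrees with weak $A_3$-formality as defined in Section \ref{sec:weak_vs_genuine_An_general}. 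I expect this to follow because $\operatorname{cd}_p G_K\le 2$ for $p$ odd, so the operations $m_3$ on $H^1$ are the only relevant ones. The cases $H^1\otimes H^2$ give $H^{\geq 3}$-valued operations, which vanish.

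\emph{Archimedean places.} Here $G_{K_v}$ is trivial or of order $2$, and $p$ is odd, so $\Cb(G_{K_v},\Fp)$ is quasi-isomorphic to $\Fp$. It is therefore formal.

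\emph{Places $v\nmid p$.} The maximal pro-$p$ quotient is metabelian of the form $\Z_p\rtimes\Z_p$, or $\Z_p$ when $\mu_p\not\subset K_v$. In the latter case $H^2=0$ and formality is trivial. When $\mu_p\subset K_v$, the group is a Demushkin-type group on two generators, and I would use the explicit description in \cite{PQodd} of the obstruction class for $m_3$ in terms of the relation $\sigma^{q}\tau\sigma^{-1}=\tau^{q}$. The obstruction is governed by whether $\mu_{p^2}\subset K_v$ or not, together with a Hochschild-cohomology class in $\HH^{2}$. For $p\ge5$ this class is a coboundary automatically, because the relevant cubic term carries a binomial coefficient divisible by $p$. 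For $p=3$ the hypothesis $\mu_3\subset K_v\Rightarrow\mu_9\subset K_v$ is exactly what kills the cubic correction term.

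\emph{Places $v\mid p$.} If $\mu_p\not\subset K_v$, then $G_{K_v}(p)$ is free pro-$p$, so $H^2=0$ and formality is immediate. If $\mu_p\subset K_v$, it is a Demushkin group, and I would again appeal to the local computation of \cite{PQodd}. That computation shows that the Massey-type obstruction for $m_3$ vanishes for $p\ge5$, and for $p=3$ it vanishes provided the $q$-invariant is divisible by $9$, which is the hypothesis.

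The main obstacle is making sure the result of \cite{PQodd} is stated in exactly the form needed, namely $A_3$-formality of the full cochain algebra for Demushkin groups with $q\ne 3$, or with $9\mid q$. For $p=3$ and $q=3$ the $A_3$-structure genuinely carries a nonzero $m_3$, coming from the cubic term in the relation, which is why the hypothesis is required. Once the local statement is in hand, the proof is a direct application of Theorem \ref{thm:LG-equivalence_intro}.
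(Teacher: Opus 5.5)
Your proposal is correct and follows the paper's first proof: you reduce via the local-to-global direction (Proposition \ref{prop:local-to-global}) to $A_3$-formality of $\Cb(G_{K_v},\F_p)$ at every place, handle the places with $H^2=0$ trivially, and cite \cite{PQodd} for Demushkin groups with $q\neq 3$, which is exactly what the local cyclotomic hypothesis at $p=3$ guarantees. Two small fixes: state explicitly that inflation $\Cb(G_{K_v}(p),\F_p)\to\Cb(G_{K_v},\F_p)$ is a quasi-isomorphism, so that formality passes from the pro-$p$ quotient to $G_{K_v}$; and drop your heuristic about binomial coefficients and a class in $\HH^{2}$, which is not needed and misplaces the obstruction, since it lives in $\HH^{3,-1}$.
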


We provide two proofs of this result.  
Both proofs are based on the local $A_3$-formality theorem for odd-prime Demushkin groups, % of the authors \cite{PQodd}, 
and the extra hypothesis at $p=3$ excludes the unique local Demushkin case in which the $A_3$-obstruction is nonzero \cite[Theorem 1.2]{PQodd}.
The first proof then uses the characterisation of $A_3$-formality via embedding problems and then 
applies the general $p$-group lifting theorem of Maire--Min\'a\v c--Ramakrishna--T\^an, \cite[Theorem 3.2]{MMRT} 
to suitable local solutions. 
The second proof uses the P\'al--Schlank Hasse principle for finite embedding problems with abelian kernel \cite{PS}, 
where the cyclotomic assumption $\mu_p\not\subset K$ implies that the relevant Tate--Shafarevich obstruction vanishes. 
Our main result, however, is that, for number fields,  
weak $A_3$-formality is strictly weaker than genuine $A_3$-formality,  
see Theorem \ref{thm:numberfield-nonformal}. 

\begin{theorem}
\label{thm:intro-main}
Let $K$ be a number field, let $p$ be an odd prime number, and assume $\mu_p\not\subset K$.  
Then $\Cb(G_K,\F_p)$ is not $A_3$-formal.
\end{theorem}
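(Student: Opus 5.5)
By Theorem \ref{thm:compatible-defining-systems_intro}, and since $H^i(G_K,\F_p)=0$ for $i\ge 3$ (because $p$ is odd, the real places contribute nothing, so $\cd_p G_K\le 2$), it suffices to show that the embedding problems $\mathbf E^{(3)}_V$, $V\subseteq H^1(G_K,\F_p)$ finite-dimensional, admit no compatible family of solutions. I argue by contradiction. Suppose $(\phi_V)_V$ is compatible, with $\phi_V\colon G_K\to \UA_3(V)$ lifting $\chi_V$, and with the transition maps $\UA_3(W)\to\UA_3(V)$ for $V\subseteq W$ intertwining $\phi_W$ and $\phi_V$. Passing to the limit gives a single continuous homomorphism $\phi\colon G_K\to \varprojlim_V \UA_3(V)=\UA_3(H^1)$ lifting the tautological character. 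The kernel of $\UA_3(V)\to V^\vee$ is built from the dual of $H^2$-type data: the degree-two and degree-three pieces of the Koszul-type construction. So a compatible family amounts to a \emph{simultaneous} choice of degree-two defining cochains for all pairs of classes in $H^1$. Once the cup product data are fixed, these choices are pinned down up to an element of $H^1$ that depends linearly and compatibly on the pair.

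The key step is to choose an infinite, well-controlled family of classes in $H^1(G_K,\F_p)$ for which such a global choice cannot exist. Concretely, I would take a fixed nonzero class $a$, for instance one ramified at a single auxiliary prime produced by the Gras--Munnier criterion. Next, via Chebotarev density applied to governing fields in the sense of Maire--Min\'a\v c--Ramakrishna--T\^an, I would produce an infinite sequence of primes $q_1,q_2,\dots$ with $q_i\equiv 1\pmod p$, together with classes $b_i$ ramified exactly at $q_i$ (again by Gras--Munnier), such that $a\cup b_i=0$ and $b_i\cup b_j=0$. In a compatible family, the defining cochain $\phi_{a,b_i}$ solving $d\phi_{a,b_i}=a\cdot b_i$ must satisfy two constraints. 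First, restricted to $V_N=\langle a,b_1,\dots,b_N\rangle$, it must make all relevant fourfold/$A_3$ operations vanish; this forces the local behaviour of $\phi_{a,b_i}$ at $q_i$ and at $q_j$ to satisfy explicit linear conditions. Second, it must be independent of $N$. Comparing these constraints, the correction term in $H^1$ allowed for $\phi_{a,b_i}$ at stage $N$ has to be ramified at $q_j$ in a prescribed way for all $j\le N$. Since $N$ can be arbitrarily large while a single class in $H^1$ is ramified at only finitely many primes, no choice works at every stage.

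The main obstacle is making this local-condition bookkeeping precise. One must identify exactly which local condition at $q_j$ the stage-$N$ solution imposes on the correction for the pair $(a,b_i)$, and show that it is nontrivial, meaning it cannot be met by an unramified correction. This is where the governing field is used: I would choose $q_j$ with a prescribed Frobenius in the governing extension, so that the relevant local cup/Massey obstruction at $q_j$ is nonzero unless the correction ramifies there. At the same time, Gras--Munnier must guarantee that the $b_j$ exist with the required ramification. Weak formality (Theorem \ref{thm:intro-weak}) shows that each finite stage is solvable, so the contradiction can only come from the requirement that corrections be independent of $N$, and the argument must exploit exactly that feature. The restriction $\mu_p\not\subset K$ enters so that the Gras--Munnier and governing-field results of \cite{MMRT} apply.
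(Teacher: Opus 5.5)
Your reduction to compatible families and your local mechanism are the right ingredients; the mechanism is that a nonzero unramified class cupped with a class ramified at a tame prime $q$ with $\mu_p\subset K_q$ is nonzero. However, the step meant to produce the contradiction fails.

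Write the family's cochain as $\phi_{a,b_i}=\phi^0+z$, with $\phi^0$ a fixed reference and $z\in H^1(G_K,\F_p)$. In the natural set-up, $q_j$ splits in $K(a,b_i)$ and $b_j$ is ramified only at $q_j$. Evaluating the degree-three equation on $a\otimes b_i\otimes b_j$ and localising at $q_j$ then gives only $\phi_{a,b_i}|_{G_{K_{q_j}}}\cup (b_j)_{q_j}=0$, i.e.\ $\phi_{a,b_i}|_{G_{K_{q_j}}}\in\F_p\,(b_j)_{q_j}$. The zero class satisfies this.

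For all but finitely many $j$, both $\phi^0$ and $z$ are unramified at $q_j$. There the condition is just $(\phi^0+z)(\Frob_{q_j})=0$, which is a condition on Frobenius values, not on ramification. An unramified correction with $z(\Frob_{q_j})=-\phi^0(\Frob_{q_j})$ meets it. No finiteness principle prevents a single global class from meeting such conditions at infinitely many primes. So your claim that the correction must ramify at $q_j$ (that the local condition `cannot be met by an unramified correction') is false, and counting ramified primes gives no contradiction.

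Moreover, your $q_j$ are fixed before the family is given. Via Gras--Munnier, the governing field only guarantees that the $b_j$ exist. It gives no control over the value at $\Frob_{q_j}$ of the unknown $\phi_{a,b_i}$, which may well vanish at every $\Frob_{q_j}$.

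Two ideas are missing.

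\textbf{Nonvanishing on $\ker\chi_V$.} You need to know that the solution's quadratic coordinate is nonzero somewhere on $\ker\chi_V$, for $V=\F_p a+\F_p b$ with $a\cup b=0$. The paper gets this from the fact that $\UA_2(V)\twoheadrightarrow V^\vee$ has no section: the commutator of lifts of a dual basis has degree-two coordinate $(xy-yx)(a\otimes b)=1$. Hence $\overline{\rho}|_{\ker\chi_V}=1$ is impossible.

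\textbf{Choosing the prime after the solution.} Pick $\sigma\in\ker\chi_V$ and $r\in R_V$ with $b_\rho(\sigma)(r)\neq 0$. By Chebotarev, take one prime $v$ whose Frobenius in the field cut out by $\rho_V$ is conjugate to $\rho_V(\sigma)$ and which splits completely in $K(\mu_p)\Gov_{K,T}$. This is possible by linear disjointness, which is where $\mu_p\not\subset K$ enters. Gras's criterion then gives $c$ ramified exactly at $v$ with $c\cup V=0$. The degree-three equation on $r\otimes c\in K_3(V+\F_p c)$ forces $d_v\cup c_v=0$ for the nonzero unramified class $d_v=b_\rho(r)|_{G_{K_v}}$, contradicting local Tate duality. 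One enlargement suffices.

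Your fixed-sequence idea could only be rescued by diagonalising over the countably many candidates $\phi^0+z_k$, choosing $q_k$ against the $k$-th one. That is the same adaptive construction, and it still needs the nonvanishing on $\ker\chi_V$.
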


%%%%%%%%%%%%%%%%%%

Before we give an outline of the arguments to prove  
Theorem \ref{thm:intro-main}, we discuss the remaining cases. 
%
%%%%%%%%%%%%%
%
%
At $p=3$, weak $A_3$-formality depends on the local cyclotomic
structure. 
Although strong Massey vanishing holds whenever
$\mu_3\not\subset K$ by the work of Maire--Min\'a\v c--Ramakrishna--T\^an
\cite[Theorem 1.1]{MMRT},  
the computation of the obstruction class of pro-$3$ Demushkin groups in \cite{PQodd} and 
our local-global principle imply that weak $A_3$-formality fails 
if there is a place $v$ such that $\mu_3\subset K_v$ but $\mu_9\not\subset K_v$, 
see Theorem \ref{thm:numberfield-absolute-trim_fails_for_p=3}. 

\begin{theorem}
\label{thm:numberfield-absolute-trim_fails_for_p=3_intro}
Let $K$ be a number field, and assume $\mu_3 \not\subset K$. 
Assume that  $K$ has a place $v$ such that $\mu_3 \subset K_v$ 
but $\mu_9 \not\subset K_v$. 
Then $\Cb(G_K,\F_3)$ is not weakly $A_3$-formal. 
In particular, $\Cb(G_{\Q},\F_3)$ is not weakly $A_3$-formal. 
%In particular, $\Cb(G_K,\F_3)$ is not $A_3$-formal. 
\end{theorem}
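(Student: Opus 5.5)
The plan is to derive the statement from the local--global principle, Theorem \ref{thm:LG-equivalence_intro}, together with the local computation of \cite{PQodd}. The hypotheses match: $K$ is a number field, $p=3$ is odd, $\mu_3\not\subset K$, and $n=3$. So $\Cb(G_K,\F_3)$ is weakly $A_3$-formal in degree one if and only if $\Cb(G_{K_w},\F_3)$ is $A_3$-formal for every place $w$. It therefore suffices to show that the local cochain algebra at the given place $v$ is not $A_3$-formal. One also has to check that failure of weak $A_3$-formality in degree one implies failure of weak $A_3$-formality. This should be immediate from the definitions in Section \ref{sec:weak_vs_genuine_An_general}: the degree-one condition is the restriction of the weak condition to finite-dimensional subspaces $V\subset H^1$. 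I would verify this direction explicitly rather than assume it.

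For the local step, first note that $\mu_3\subset K_v$ forces $v$ to be non-archimedean. At a complex place $\mu_3\subset\mathbb C$ but also $\mu_9\subset\mathbb C$, and at a real place $\mu_3\not\subset\R$; so the hypothesis excludes archimedean places. Now $K_v$ is a $p$-adic or $\ell$-adic field containing $\mu_3$, so its maximal pro-$3$ quotient is a Demushkin group (free pro-$3$ is impossible, since $H^2(G_{K_v},\F_3)\cong\F_3$ when $\mu_3\subset K_v$). Its invariant $q$, the largest $3$-power with $\mu_q\subset K_v$, equals $3$ exactly because $\mu_9\not\subset K_v$. This is precisely the exceptional case of \cite[Theorem 1.2]{PQodd}, where the $A_3$-obstruction class is nonzero. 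I then need to pass from the pro-$3$ Demushkin group to the full Galois group $G_{K_v}$. Restriction/inflation induces a quasi-isomorphism on mod-$3$ cochains, because the kernel of $G_{K_v}\to G_{K_v}(3)$ has trivial mod-$3$ cohomology (the standard fact for local fields). Hence $\Cb(G_{K_v},\F_3)$ is not $A_3$-formal, and the main claim follows.

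For the final assertion, take $K=\Q$. Then $\mu_3\not\subset\Q$, and at $v=7$ we have $7\equiv1\pmod 3$ and $7\not\equiv 1\pmod 9$. So $\mu_3\subset\Q_7$ but $\mu_9\not\subset\Q_7$, since the roots of unity of order prime to $7$ in $\Q_7$ are exactly $\mu_6$. Thus the hypothesis holds and $\Cb(G_\Q,\F_3)$ is not weakly $A_3$-formal.

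The main obstacle is matching the local input to \cite[Theorem 1.2]{PQodd}. That theorem must cover the case $q=3$ for both residue characteristic $3$ and residue characteristic $\ell\ne3$. In the latter case the group is a $2$-generator Demushkin group, and I would double-check that the obstruction is still nonzero there; if it is not, choose a place above $3$ instead when one exists. For $\Q$ I would fall back on $v=3$: since $\mu_3\not\subset\Q_3$, that place is irrelevant, so the example must use an $\ell$-adic place such as $\ell=7$.
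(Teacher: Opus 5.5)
Your proposal is correct and follows the paper's own proof. The place $v$ is necessarily finite, and $G_{K_v}(3)$ is a pro-$3$ Demushkin group with $q=3$. Its cochain algebra is not $A_3$-formal by \cite{PQodd}: the paper cites Theorem 6.12 there and applies it to any such group regardless of residue characteristic, so your fallback to a place above $3$ is unnecessary. This non-formality passes to $\Cb(G_{K_v},\F_3)$ via the inflation quasi-isomorphism, and Theorem \ref{thm:LG-equivalence} then rules out weak $A_3$-formality over $K$; your choice $K=\Q$, $v=7$ is exactly Example \ref{example:p=3_not_weak_class_non_empty}.
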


For $p=2$, the example of Harpaz--Wittenberg in \cite[Example A.15]{GMT}  
(also recovered by Merkurjev--Scavia in \cite[Proposition 5.3]{MS1})  
shows that strong Massey vanishing does not hold for $\Q$, 
and hence $\Cb(G_{\Q},\F_2)$ is not weakly $A_3$-formal. 
We provide further examples of the failure of weak $A_3$-formality for $\Cb(G_F,\F_2)$ where $F$ is a field 
in Proposition \ref{prop:nonA3-examples}. 
In particular, we show that $\Cb(G_F,\F_2)$ is not weakly $A_3$-formal  
whenever $F$ is a finite extension of $\Q$ of odd degree. 
In Remarks \ref{rem:failure_at_p=2_local-global} and \ref{rem:failure_at_p=2_weak_A_3}, 
we explain why neither the local--global principle of Theorem \ref{thm:LG-equivalence_intro} 
nor the second proof of Theorem \ref{thm:intro-weak} extend to $p=2$. 
The above examples show that % in Proposition \ref{prop:nonA3-examples} show that
this is not merely a limitation of the proofs as weak $A_3$-formality itself can fail at $p=2$. 
%
%%%%%%%%%%%%%%%

\subsection{Outline of the proof of Theorem \ref{thm:intro-main}}

By Theorem \ref{thm:compatible-defining-systems_intro}, 
$A_3$-formality would imply the existence of a compatible
family of solutions to the $A_3$-embedding problems 
$\mathbf E^{(3)}_V \colon G_K \xrightarrow{\chi_V} V^\vee \twoheadleftarrow\UA_3(V)$ 
attached to all finite-dimensional subspaces $V$ of $H^1(G_K,\F_p)$. 
We begin with a two-dimensional subspace 
$V=\F_p a + \F_p b$
generated by linearly independent classes satisfying $a\cup b=0$, and
consider the $V$-component $\rho_V \colon G_K \to \UA_3(V)$ of such a family. 
Let $\bar\rho_V \colon G_K \to \UA_2(V)$ denote its quadratic quotient. 
If $\bar\rho_V$ is trivial on the kernel of the tautological character $\chi_V$, then it factors through
$V^\vee$ and produces a section of $\UA_2(V)\twoheadrightarrow V^\vee$. 
An elementary commutator calculation shows
that no such section can exist when $V$ contains the isotropic pair $a,b$.
%
%%%%%
%
It remains to treat the case in which $\bar\rho_V$ is nontrivial on $\ker(\chi_V)$. 
A nonzero quadratic coordinate of $\bar\rho_V$ is detected
at a new tame prime $v$ of $K$, 
which we can choose by Chebotarev's density theorem. 
Cyclotomic disjointness allows us simultaneously to require that $v$ splits in the
cyclotomic and governing fields. 
The one-prime Gras--Munnier criterion of Corollary \ref{cor:gras-one-prime} 
then produces a new class $c\in H^1(G_K,\F_p)$  
which is ramified only at $v$, trivial at the previous primes, 
and with trivial cup-products with elements in $V$. 
For the enlargement $W=V+\F_p c$, 
the existence of a lift $\rho_W$ %degree-three lifting equation 
would force the cup product of a nonzero unramified local class with the ramified class $c_v$
to vanish. 
This contradicts the nondegeneracy of the local Tate pairing. 
Thus $\rho_V$ cannot extend to the problem attached to $W$. 
Consequently, no family of finite-stage solutions can be compatible under all
enlargements, and hence the Galois cochain algebra is not $A_3$-formal.

%%%%%%%%%%%%%%%%%

\begin{remark}\label{rem:MMRT-relation}
The ideas in this paper are inspired by and often rely on the work of Maire--Min\'a\v c--Ramakrishna--T\^an in \cite{MMRT}. 
The proof of the local--global principle in Theorem \ref{thm:LG-equivalence_intro} 
and the arithmetic mechanism of the proof of Theorem \ref{thm:intro-main} rely on the results proven in \cite{MMRT}. 
To prove Theorem \ref{thm:LG-equivalence_intro}, 
we use \cite[Theorem 3.2]{MMRT}, which is a general lifting theorem for finite $p$-groups with given local lifts. 
To prove Theorem \ref{thm:intro-main}, we use, in particular, 
the theory of governing fields and the cyclotomic disjointness argument of \cite[Section 2]{MMRT}, 
together with the Gras--Munnier criterion. 
The resulting construction is closely related to the method underlying \cite[Proposition 3.1]{MMRT}. 
However, we use it in the opposite direction. 
While Maire--Min\'a\v c--Ramakrishna--T\^an use their theory to show the solvability of strong Massey vanishing problems, 
we apply the techniques 
to choose a finite-dimensional subspace $V$ such that every solution for $V$ is obstructed after some finite-dimensional enlargement, 
so that no compatible solution exists in the whole projective system of embedding problems. 
\end{remark}

%%%%%%%%%%%%%

\subsection{Content} 
In Section \ref{sec:weak_vs_genuine_An_general}, 
we define weak $A_n$-formality and develop a criterion for weak and genuine $A_n$-formality under the assumption 
that the cohomology is concentrated in degrees at most two. 
In Section \ref{sec:finite-embedding-comparison}, 
we  construct the groups $\UA_n(V)$, identify the restricted $A_n$-obstructions with embedding problems, 
and characterise genuine $A_n$-formality by the existence of a compatible inverse system of solutions 
proving Theorem \ref{thm:compatible-defining-systems_intro}. 
In Section \ref{sec:weak-counterexamples}, we discuss explicit examples of the failure of weak $A_3$-formality at the prime $2$. 
In Section \ref{sec:numberfields-local-global}, 
we prove the local--global principle for weak $A_n$-formality over number fields at odd primes of Theorem \ref{thm:LG-equivalence_intro}. 
In Section \ref{sec:number-fields-weak}, we apply this criterion and the computation for Demushkin groups to prove
Theorems \ref{thm:intro-weak} and \ref{thm:numberfield-absolute-trim_fails_for_p=3_intro}. 
Finally, in Section \ref{sec:nonformality}, we prove  Theorem \ref{thm:intro-main}. 

%%%%%%%%%%%%%%%%%%%%%

\subsection*{Acknowledgement} 
We are very grateful to Jan Min\'a\v c for his encouragement during the work on this project. 
The authors acknowledge that the initial sketches of some parts of the paper were developed via conversations with OpenAI's ChatGPT. 
The authors thoroughly reviewed the suggestions of the AI. 
The authors have written the entire manuscript and take full responsibility for its content. 

%%%%%%%%%%%%%%%%%%%%%
%
%%%%%%%%%%%%%%%%%%%%%

\section{Criteria for weak and genuine \texorpdfstring{$A_n$}{An}-formality}\label{sec:weak_vs_genuine_An_general}

Let $p$ be a prime number. 
Let $\Cb = (\Cb,\cup, \delta)$ be a strictly unital differential graded $\Fp$-algebra with cohomology algebra $\Hb$. 
We refer to \cite{Keller} for the theory of $A_\infty$-algebras. 
By Kadeishvili's work \cite{Kadeishvili82, Kadeishvili88}, 
homotopy transfer provides $\Hb$ with a strictly unital minimal $A_\infty$-structure
$(\Hb,m_2,m_3,m_4,\ldots)$ 
with $m_2$ the multiplication induced by the cup-product.  
The $n$-ary operation $m_n$ has degree $2-n$. 
We note that any two minimal transferred 
$A_\infty$-structures on $\Hb$ are $A_\infty$-isomorphic through an isomorphism whose linear term is the identity. 
We will therefore also refer to $(\Hb,m_2,m_3,m_4,\ldots)$ as {\em the minimal $A_\infty$-structure} on $\Hb$. 

\begin{definition}\label{def:An}
For $n\ge3$, 
$\Cb$ is called \emph{$A_n$-formal} if the transferred minimal $A_\infty$-structure 
can be chosen such that 
$m_3=m_4=\cdots=m_n=0$.
\end{definition}

%%%%%%%%%%%%%

\begin{example}
Every differential graded $\F_2$-algebra whose cohomology algebra is isomorphic to 
$\F_2[a,b,c]/(ab,bc)$ is $A_3$-formal by \cite[Theorem 1.1]{DQ}. 
\end{example}

%%%%%%%%%%%%%%%%%%%

\begin{remark}\label{rem:A_infty_identity_for_mn} 
If $m_3=\dots=m_{n-1}=0$, then the $A_\infty$-identity for the homomorphism  
$m_n \colon (\Hb)^{\otimes n} \to \Hb$ of degree $2-n$ is given by 
\begin{align*}%\label{eq:A_infty_identity_for_mn} 
0 = m_2(\One \otimes m_n) + (-1)^{n+1}m_2(m_n \otimes \One) 
+ \sum_{i=0}^{n-1} (-1)^{i+1} m_n\left(\One^{\otimes i} \otimes m_2 \otimes \One^{\otimes (n-1-i)} \right)
\end{align*}
where $\One$ denotes the identity on $\Hb$. 
\end{remark}

Once $m_3=m_4=\cdots=m_{n-1}=0$, 
the obstruction to finding a transferred minimal 
$A_\infty$-structure on $\Hb$ with trivial $m_n$ lies in the graded Hochschild cohomology group $\HH^{n,2-n}(\Hb,\Hb)$. 
We now recall the definition of graded Hochschild cohomology. 
Let $A = \oplus_{i \ge 0} A^i$ be a non-negatively graded $\Fp$-algebra with $A^0=\Fp$. 
Let $M$ be a graded $A$-bimodule. 
For $s\in \Z$, we write $M[s]$ for the graded $A$-bimodule given in degree $n$ by $M[s]^n = M^{n+s}$, 
where $A$ acts on the left by $a\cdot_{M[s]}m = (-1)^{s|a|}am$ and from the right by $m \cdot_{M[s]} a = ma$. 
We write $A_+ \coloneqq \oplus_{i \ge 1} A^i$ %A/\F_p\langle 1 \rangle$ 
for the augmentation ideal. 
We let $\gHom_{\Fp}(A_+, M)$ denote the graded $\Fp$-vector space
with degree $s$ component given by $\Fp$-linear graded maps $f \colon A_+ \to M[s]$. 
The {\it normalised Hochschild complex} of $A$ with coeﬃcients in $M$,  
denoted by $C^{\bullet,*}(A,M)$, is defined as follows. 
We set 
\begin{align*}%\label{eq:def_graded_complex_HH}
C^{n,s}(A,M) \coloneqq  \gHom^s_{\Fp}(A_+^{\ot n}, M) 
\end{align*}
with differential $\dee^{n-1} \colon C^{n-1,s}(A,M) \to C^{n,s}(A,M)$ 
defined by 
\begin{align}\label{eq:HH_differential}
\dee^{n-1}(f)(a_1, \ldots, a_{n}) \coloneqq & ~  (-1)^{s|a_1|}a_1f(a_2, \ldots, a_{n}) \\
& ~~~ + \sum_{i=1}^{n-1} (-1)^i f(a_1, \ldots, a_ia_{i+1}, \ldots, a_{n})  \nonumber \\
&  ~~~ + (-1)^{n} f(a_1, \ldots, a_{n-1})a_{n} \nonumber
\end{align}
where $|a_1|$ denotes the degree of the element $a_1$. 
A Hochschild cochain in degree $(n,s)$ with coefficients in $M$ is thus given by a multilinear function 
from $n$-tuples of elements of $A$ to $M$ which raise the degree by $s$. 
We note that the differential only changes the first grading denoted by $\bbb$. 
The {\it Hochschild cohomology group} $\HH^{n,s}(A,M)$ is the $n$th cohomology group of the complex 
$(C^{\bbb,s}(A,M), \partial)$. 
The groups $\HH^{\bullet, *}(A,M)$ are equipped with a cohomological grading $\bullet$, 
and an internal grading $*$ induced by the grading of $A$ and $M$.  
%
%%%%%%%%%%%%%%
%

\begin{remark}
Evaluating the identity for $m_n$ in Remark \ref{rem:A_infty_identity_for_mn} 
on homogeneous elements and comparing with \eqref{eq:HH_differential} 
shows that $m_n$ is a Hochschild cocycle and defines an element $[m_n] \in \HH^{n,2-n}(\Hb,\Hb)$.  
\end{remark}

%%%%%%%%%%%%%%

We also refer to $A_n$-formality as {\em genuine} $A_n$-formality to distinguish from the following weaker notion. 

\begin{definition}\label{def:weak-An} 
We say that $\Cb$ is \emph{weakly $A_n$-formal} 
if, for every finite-dimensional vector subspace $V\subset H^1$, there is a choice of %transferred 
minimal structure  for which 
$m_3|_{V^{\otimes3}} = \cdots = m_n|_{V^{\otimes n}} = 0$.  
\end{definition}

\begin{remark}
We note that the definition of weak formality does not require any compatibility when $V$ varies. 
\end{remark}

%%%%%%%%%%%%%

From now on we assume that $\Cb$ is a differential graded $\Fp$-algebra 
such that $\Hb$ is concentrated in degrees $0,1,2$ with $H^0 = \F_p$. 
We note that this assumption holds for the mod-$p$ Galois cohomology of number fields when $p$ is odd.
Under this assumption, % $H^0 = \F_p$ and $H^i=0$ for $i \ge 3$,  
for every $n \ge 3$, 
the only possibly nonzero component of $m_n$ is  
$m_n \colon (H^1)^{\otimes n} \to H^2$.

%%%%%%%%%%%%%%%%

\begin{notn}
For a vector subspace $V\subset H^1$, 
let $A_V$ be the subalgebra of $\Hb$ 
generated by $V$. 
We write $B_V=\im(V\otimes V\to H^2)$.  
Then
$A_V= \F_p \oplus V\oplus B_V$, and every product involving $B_V$ and a
positive-degree element is zero. 
We introduce the following vector spaces similar to the spaces in the Koszul complex of a quadratic algebra. 
We write $\mu_V \colon V\otimes V \to H^2$, 
$x\otimes y \mapsto x \cup y$, 
and set $R_V \coloneqq \ker(\mu_V)$. 
For $m\ge1$, we define $\Krel_1(V) \coloneqq V$, $\Krel_2(V) \coloneqq R_V$, 
and, for $m\ge3$,
\begin{equation}\label{eq:Km2}
  \Krel_m(V) \coloneqq
  \bigcap_{j=0}^{m-2}
  V^{\otimes j}\otimes R_V\otimes V^{\otimes(m-2-j)}
  \subset V^{\otimes m}.
\end{equation}
In particular, 
$\Krel_3(V) = (R_V\otimes V)\cap(V\otimes R_V)$. 
\end{notn}

\begin{lemma}
\label{lem:deconcatenation}
For $i,j \ge 1$, 
there is a natural inclusion
\[
\Delta_{i,j} \colon  \Krel_{i+j}(V)\hookrightarrow \Krel_i(V)\otimes\Krel_j(V).
\]
These inclusions are compatible with iterated deconcatenation of tensor powers.
\end{lemma}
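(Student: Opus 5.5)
The natural candidate for $\Delta_{i,j}$ is the canonical associativity isomorphism
\[
\alpha_{i,j}\colon V^{\otimes(i+j)}\xrightarrow{\ \cong\ } V^{\otimes i}\otimes V^{\otimes j},
\]
restricted to $\Krel_{i+j}(V)$. I would show that its image lies in the subspace $\Krel_i(V)\otimes\Krel_j(V)\subset V^{\otimes i}\otimes V^{\otimes j}$. Injectivity is then automatic, and naturality in $V$ (for linear maps $V\to V'$ compatible with cup products, so that $R_V\to R_{V'}$) is clear, since $\alpha$ is natural.

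The key linear-algebra input is the following. Let $X\subset V^{\otimes i}$ and $Y\subset V^{\otimes j}$ be subspaces. Then, inside $V^{\otimes i}\otimes V^{\otimes j}$,
\[
(X\otimes V^{\otimes j})\cap(V^{\otimes i}\otimes Y)=X\otimes Y .
\]
More generally, intersections commute with tensoring by a fixed space over a field. To prove this, choose a basis of $V^{\otimes i}$ extending a basis of $X$, and a basis of $V^{\otimes j}$ extending a basis of $Y$, and compare coefficients on the product basis. I would also use the companion fact that $\bigcap_k(X_k\otimes U)=(\bigcap_k X_k)\otimes U$, which holds by the same argument because $-\otimes U$ is exact and commutes with finite intersections.

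Now group the intersection \eqref{eq:Km2} defining $\Krel_{i+j}(V)$ by the position $t$ of the factor $R_V$, which occupies slots $t+1,t+2$.

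For $t\le i-2$, the term $V^{\otimes t}\otimes R_V\otimes V^{\otimes(i+j-2-t)}$ equals $\bigl(V^{\otimes t}\otimes R_V\otimes V^{\otimes (i-2-t)}\bigr)\otimes V^{\otimes j}$. Intersecting over these $t$ and applying the companion fact gives $\Krel_i(V)\otimes V^{\otimes j}$. The degenerate cases are handled by the conventions $\Krel_1=V$ and $\Krel_2=R_V$; when $i=1$ this family is empty and the result is $V\otimes V^{\otimes j}$.

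Symmetrically, the terms with $t\ge i$ intersect to $V^{\otimes i}\otimes\Krel_j(V)$.

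The remaining straddling term $t=i-1$ can simply be discarded, since dropping it only enlarges the intersection. We therefore get
\[
\Krel_{i+j}(V)\subseteq (\Krel_i(V)\otimes V^{\otimes j})\cap(V^{\otimes i}\otimes \Krel_j(V))=\Krel_i(V)\otimes\Krel_j(V).
\]

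Compatibility with iterated deconcatenation amounts to coassociativity: for $i,j,k\ge1$,
\[
(\Delta_{i,j}\otimes\id)\circ\Delta_{i+j,k}=(\id\otimes\Delta_{j,k})\circ\Delta_{i,j+k}
\]
as maps $\Krel_{i+j+k}(V)\to\Krel_i(V)\otimes\Krel_j(V)\otimes\Krel_k(V)$. This is immediate because both composites are restrictions of the associativity isomorphism $V^{\otimes(i+j+k)}\cong V^{\otimes i}\otimes V^{\otimes j}\otimes V^{\otimes k}$, and the tensor-power associators are coherent by Mac Lane's coherence theorem.

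The only step needing care is the intersection identity. It holds because we work over the field $\Fp$, where all modules are free and flat. Everything else is bookkeeping of indices.
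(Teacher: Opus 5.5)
Your proposal is correct and follows the paper's own argument. The relations internal to the first $i$ and to the last $j$ factors place $\Krel_{i+j}(V)$ in $\Krel_i(V)\otimes V^{\otimes j}$ and in $V^{\otimes i}\otimes\Krel_j(V)$, the identity $(A\otimes Y)\cap(X\otimes B)=A\otimes B$ over a field gives the inclusion, and compatibility comes from associativity of tensor powers. You spell out more of the bookkeeping than the paper does (discarding the straddling relation, and commuting $-\otimes U$ with finite intersections).
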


\begin{proof}
An element of $\Krel_{i+j}(V)$ lies in every subspace obtained by imposing the quadratic relation $R_V$ on two adjacent tensor factors.  
In particular, imposing the relations internal to the first $i$ factors places it in $\Krel_i(V)\otimes V^{\otimes j}$, 
and imposing those internal to the last $j$ factors places it in $V^{\otimes i}\otimes\Krel_j(V)$.  
Over a field, we have 
$(A\otimes Y)\cap(X\otimes B)=A\otimes B$ 
inside $X\otimes Y$ whenever $A\subset X$ and $B\subset Y$.  
Hence we get an inclusion into 
$\Krel_i(V)\otimes\Krel_j(V)$.  
Compatibility is inherited from the ordinary deconcatenation of tensor powers.
\end{proof}

%%%%%%%%%%%%

\begin{lemma}\label{lemma:K_VW}
If $W\subset V\subset H^1$, then  
$\Krel_m(W)=\Krel_m(V)\cap W^{\otimes m}$ for $m \ge 1$. 
\end{lemma}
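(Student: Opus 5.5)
The plan is to first settle the quadratic case $m=2$, and then deduce all higher $m$ from it using the formula \eqref{eq:Km2} together with the intersection identity for tensor products already used in the proof of Lemma \ref{lem:deconcatenation}.

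For $m=1$ the statement is trivial, since $\Krel_1(W)=W=V\cap W$. For $m=2$ we must show $R_W=R_V\cap(W\otimes W)$. The map $\mu_W$ is the restriction of $\mu_V$ to the subspace $W\otimes W\subset V\otimes V$. Both are given by the cup product into $H^2$, and $W\otimes W\to V\otimes V$ is injective because we work over the field $\Fp$. Hence
\[
\ker(\mu_W)=\ker(\mu_V)\cap (W\otimes W),
\]
which is exactly $R_W=R_V\cap W^{\otimes 2}$.

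For $m\ge3$, the key observation is the identity
\[
\bigl(V^{\otimes j}\otimes R_V\otimes V^{\otimes(m-2-j)}\bigr)\cap W^{\otimes m}
= W^{\otimes j}\otimes (R_V\cap W^{\otimes 2})\otimes W^{\otimes(m-2-j)} .
\]
This follows from the general linear-algebra fact that, for subspaces $A\subset X$ and $X'\subset X$, and $Y'\subset Y$, one has $(A\otimes Y)\cap(X'\otimes Y')=(A\cap X')\otimes Y'$ inside $X\otimes Y$. To prove that fact, choose a basis of $Y$ extending a basis of $Y'$ and expand an element in it: the coefficients lie in $A$ and in $X'$, and the components outside $Y'$ vanish. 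We apply this, up to reordering the tensor factors, with $X=V^{\otimes 2}$ in the slots $j+1,j+2$ and $Y$ the remaining factors. By the case $m=2$, the right-hand side equals $W^{\otimes j}\otimes R_W\otimes W^{\otimes(m-2-j)}$.

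Intersecting over $j=0,\dots,m-2$ then gives
\[
\Krel_m(V)\cap W^{\otimes m}=\bigcap_j\Bigl(\bigl(V^{\otimes j}\otimes R_V\otimes V^{\otimes(m-2-j)}\bigr)\cap W^{\otimes m}\Bigr)=\Krel_m(W).
\]

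The only step requiring any care is the tensor-intersection identity in the middle. It relies on the choice of compatible bases, which is available because $\Fp$ is a field; everything else is formal.
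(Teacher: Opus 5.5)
Your proof is correct and follows the paper's argument. The paper likewise first notes $R_W=R_V\cap W^{\otimes 2}$, then identifies $\bigl(V^{\otimes j}\otimes R_V\otimes V^{\otimes(m-2-j)}\bigr)\cap W^{\otimes m}$ with $W^{\otimes j}\otimes R_W\otimes W^{\otimes(m-2-j)}$, phrased there as the kernel of the quotient map $q_j$ restricted to $W^{\otimes m}$, and finally intersects over $j$; your basis-extension argument for the tensor-intersection identity supplies the justification the paper leaves implicit.
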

\begin{proof}
It is clear that  $R_W=R_V\cap W^{\otimes2}$. 
More generally,  let $0 \le  j \le m-2$, 
and consider the quotient map 
$q_j\colon V^{\otimes m}\to V^{\otimes j}\otimes (V^{\otimes 2}/R_V)\otimes V^{\otimes(m-j-2)}$.   
Its kernel is $V^{\otimes j}\otimes R_V\otimes V^{\otimes(m-j-2)}$. 
The kernel of its restriction to $W^{\otimes m}$ 
is 
\[ 
W^{\otimes j}\otimes (R_V\cap W^{\otimes2})\otimes W^{\otimes(m-j-2)} = W^{\otimes j}\otimes R_W\otimes W^{\otimes(m-j-2)}. 
\] 
Intersecting these identities over $j$ 
gives $\Krel_m(W) = \Krel_m(V)\cap W^{\otimes m}$.
\end{proof}

%%%%%%%%%%%%

For $n \ge 3$, we define the linear map 
\begin{equation}\label{eq:DnV}
D_{n,V} \colon \Hom_{\Fp}(\Krel_{n-1}(V),H^1)  \longrightarrow  \Hom_{\Fp}(\Krel_n(V),H^2)
\end{equation}
by setting 
\begin{align*}
D_{n,V} (f)(x_1, \ldots, x_{n}) \coloneqq (-1)^{n} x_1 \cup f(x_2, \ldots, x_{n})  
+  (-1)^{n}f(x_1, \ldots, x_{n-1})\cup x_{n}. 
\end{align*}
%
%%%%%%

Now we show that this construction receives a map from Hochschild cohomology. 
A normalised Hochschild $n$-cochain on $A_V$ of internal degree $2-n$ has only one possible nonzero component 
consisting of maps of the form 
$V^{\otimes n} \to H^2$. 
There is no outgoing Hochschild differential, 
since the target of the next differential would lie in cohomological degree at least $3$.
A normalised Hochschild $(n-1)$-cochain of internal degree $2-n$ has two kinds of components:
\begin{enumerate}[label=\textnormal{(\roman*)}]
\item a component
$V^{\otimes(n-1)} \to H^1$,  
\item for each $0\le j\le n-2$, a component factoring through  
\[
V^{\otimes j}\otimes B_V\otimes V^{\otimes(n-2-j)} \to H^2.
\]
\end{enumerate}
The middle terms in the Hochschild differential in \eqref{eq:HH_differential} 
come from the components in (ii).  
Their joint annihilator is exactly $\Krel_n(V)$.
After restriction to $\Krel_n(V)$, the two endpoint terms depend 
only on the restriction of the component in (i) to $\Krel_{n-1}(V)$.  
This shows $\im(\partial^{n-1})|_{\Krel_{n}(V)} \subset \im(D_{n,V})$. 
Hence, for every $n \ge 3$, restriction to $\Krel_n(V)$ induces  
a well-defined homomorphism 
\begin{equation}\label{eq:HH-coker}
\iota_{n,V} \colon \HH^{n,2-n}(A_V,\Hb) \longrightarrow \coker(D_{n,V}), [f] \mapsto [f|_{\Krel_n(V)}]. 
\end{equation} 
Since every linear map $\Krel_n(V) \to H^2$ can be linearly extended to a map $V^{\otimes n} \to H^2$, 
$\iota_{n,V}$ is surjective. 
We show that this is actually an isomorphism. 

\begin{theorem}
\label{thm:HHcomparison} 
%We assume that $\Hb$ satisfies \eqref{eq:cd2-cohom}. 
Let $\Hb$ be a graded $\F_p$-algebra such that $H^0 = \F_p$ and $H^i=0$ for $i \ge 3$. 
For every vector subspace $V \subset H^1$ and 
for every $n \ge 3$, 
the map $\iota_{n,V}$ is an isomorphism 
which is functorial with respect to inclusions of vector subspaces in $H^1$. 
\end{theorem}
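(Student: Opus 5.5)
We have already seen that $\iota_{n,V}$ is well defined and surjective, so the plan is to prove injectivity and then functoriality. For injectivity, let $f$ be a normalised Hochschild $n$-cochain of internal degree $2-n$; it is a cocycle automatically, and its only component is $f\colon V^{\otimes n}\to H^2$. Assume $f|_{\Krel_n(V)}=D_{n,V}(g)$ for some $g\in\Hom(\Krel_{n-1}(V),H^1)$. We must produce an $(n-1)$-cochain $h$ with $\partial^{n-1}h=f$ on all of $V^{\otimes n}$.

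The construction proceeds in two steps. First, extend $g$ linearly to $\tilde g\colon V^{\otimes(n-1)}\to H^1$ and use it as the component of type (i) of $h$. The endpoint terms of $\partial^{n-1}h$ then give a map $E\colon V^{\otimes n}\to H^2$ agreeing with $D_{n,V}(g)$ on $\Krel_n(V)$; up to the sign convention in \eqref{eq:HH_differential} versus \eqref{eq:DnV}, which we would fix by replacing $g$ with $\pm g$. Hence $f':=f-E$ vanishes on $\Krel_n(V)$.

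Second, kill $f'$ using components of type (ii). For $0\le j\le n-2$, let $k_j\colon V^{\otimes j}\otimes B_V\otimes V^{\otimes(n-2-j)}\to H^2$. The $j$-th middle term of $\partial^{n-1}h$ is $(-1)^{j+1}k_j\circ(\id^{\otimes j}\otimes\mu_V\otimes\id)$. Its precomposition with multiplication identifies the maps of this form exactly with the maps $V^{\otimes n}\to H^2$ vanishing on $N_j:=V^{\otimes j}\otimes R_V\otimes V^{\otimes(n-2-j)}$, because $\mu_V\colon V^{\otimes 2}\to B_V$ is surjective with kernel $R_V$. So we need the linear-algebra statement that
\[
\Hom(V^{\otimes n}/\textstyle\bigcap_j N_j,H^2)=\sum_j \Hom(V^{\otimes n}/N_j,H^2).
\]
Dually, $(\bigcap_j N_j)^{\perp}=\sum_j N_j^{\perp}$ holds for finitely many subspaces of any vector space. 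For $H^2$-valued maps this follows by choosing a basis of $H^2$, or directly because $V^{\otimes n}/\bigcap N_j$ embeds in $\bigoplus_j V^{\otimes n}/N_j$, so any map from the former extends to the direct sum. This works even for infinite-dimensional $V$, since linear maps extend from subspaces over a field.

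Choose such $k_j$ with sum $f'$ (adjusting signs), and note that the components $k_j$ are independent of $\tilde g$. Then $\partial^{n-1}h=E+f'=f$, which proves injectivity. I expect this linear-algebra step, together with the sign bookkeeping that matches the endpoint terms with $D_{n,V}$, to be the main point of care.

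For functoriality, let $W\subset V$. There is an inclusion $A_W\subset A_V$, so restriction of cochains gives $\HH^{n,2-n}(A_V,\Hb)\to\HH^{n,2-n}(A_W,\Hb)$. On the other side, Lemma \ref{lemma:K_VW} gives $\Krel_m(W)=\Krel_m(V)\cap W^{\otimes m}$, so restriction maps $\Hom(\Krel_m(V),H^i)\to\Hom(\Krel_m(W),H^i)$ commute with $D_{n,V}$ and $D_{n,W}$ and induce a map $\coker(D_{n,V})\to\coker(D_{n,W})$. Since $\iota_{n,V}$ and $\iota_{n,W}$ are both given by restriction, the square commutes.
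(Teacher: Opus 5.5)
Your proposal is correct and follows essentially the same route as the paper. You extend $g$ linearly so that the endpoint terms kill $f$ on $\Krel_n(V)$. You then write the remaining map, which vanishes on $\Krel_n(V)=\bigcap_j N_j$, as a sum of maps each vanishing on one $N_j$, and realise each as a middle term of a cochain supported on a single $B_V$-factor. Functoriality is by restriction, as in the paper.

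Two small remarks. Your direct-sum embedding argument explicitly justifies the annihilator identity that the paper only asserts. The sign adjustment is unnecessary: $(-1)^{s|a_1|}=(-1)^{n}$ for $s=2-n$, so the endpoint terms of $\partial^{n-1}$ are exactly $D_{n,V}$.
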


\begin{proof}
For $n \ge 3$, every $n$-cochain of internal degree $2-n$ is a map $V^{\otimes n}\to H^2$ and is automatically a cocycle.  
It remains to show that $\iota_{n,V}$ is injective. 
Let $[f]\in \HH^{n,2-n}(A_V,\Hb)$ belong to the kernel of $\iota_{n,V}$. 
Then $f|_{K_n(V)} = D_{n,V}(\varphi)$ 
for some linear map $\varphi \colon K_{n-1}(V) \to H^1$. 
We choose a linear extension 
$\gamma \colon V^{\otimes(n-1)}\to H^1$. 
After restriction to $K_n(V)$, 
all middle terms in $\partial^{n-1}\gamma$ vanish, 
while its endpoint terms are $D_{n,V}(\varphi)$. 
Hence $(f-\partial^{n-1}\gamma)|_{K_n(V)} =0$.  
It therefore suffices to show that every map $V^{\otimes n}\to H^2$ vanishing on $K_n(V)$ is a Hochschild coboundary.  
So, let $f \colon V^{\otimes n} \to H^2$ be a map such that $f|_{\Krel_n(V)} = 0$. 
For a subspace $S\subseteq V^{\otimes n}$, 
we write 
$S^\ann \coloneqq \{ f \in \Hom_{\F_p}(V^{\otimes n},H^2) \colon f|_S=0\}$.   
Let 
$X_j = V^{\otimes j}\otimes R_V\otimes V^{\otimes(n-j-2)}$ 
for $0\le  j \le n-2$.  
Then $K_n(V)=\bigcap_j X_j$. 
Since the intersection is finite, we have 
$K_n(V)^\ann = \sum_{j=0}^{n-2}X_j^\ann$.  
A map in $X_j^\ann$ factors through 
$V^{\otimes n}/X_j \cong V^{\otimes j}\otimes B_V\otimes V^{\otimes(n-j-2)}$.  
It is therefore the corresponding middle component of the Hochschild differential of a normalised $(n-1)$-cochain of internal degree $2-n$. 
For an $(n-1)$-cochain supported on a component with one $B_V$-factor, 
its endpoint terms vanish for degree reasons as $A_V^{>0} \cdot H^2 = 0 = H^2 \cdot A_V^{>0}$. 
Thus each map in $X_j^\ann$ is the middle component of a Hochschild boundary. 
This shows that $f$ is a Hochschild coboundary. 
This proves the injectivity of $\iota_{n,V}$. 
%
%%%%%%%%%%
%
Functoriality in $V$ follows from 
the compatibility of $D_{n,-}$ with the restriction along inclusions of vector spaces 
and the fact that the Hochschild restriction map corresponds to the induced map on cokernels. 
%
%%%%%%%%%%%%%%
\end{proof}

%%%%%%%%%%%%%
%
%%%%%%%%%%%%%

Now we consider the case $n=3$. 
The Hochschild cohomology class $[m_3]\in \HH^{3,-1}(\Hb,\Hb)$ 
is independent of the choices, 
and we call  
$[m_3]$ the \emph{canonical $A_3$-class of $\Cb$} following \cite[Section 5]{BKS}. 
The differential graded algebra $\Cb$ is $A_3$-formal if and only if the 
canonical $A_3$-class of $\Cb$ vanishes (see for example \cite[Theorem 3.9]{PQodd}). 
A map representing $[m_3]$ %The canonical $A_3$-class of $\Cb$ 
can be constructed as follows (see for example \cite[Section 5]{BKS}, \cite{Merkulov}). 
We choose an $\Fp$-linear map $f_1 \colon \Hb \to \Zb$ of degree $0$ 
which induces the identity after taking cohomology. 
We choose an $\Fp$-linear map $f_2 \colon \Hb \otimes \Hb \to \Cb$ of degree $-1$
such that 
$\delta f_2(x,y) = - f_1(x) \cup f_1(y) + f_1(x \cup y)$.  
We then define the $\F_p$-linear map $\kappa \colon \Hb \otimes \Hb \otimes \Hb \to \Cb$ of degree $-1$ 
by
\begin{align*}%\label{eq:def_kappa_3}
\kappa(x,y,z) \coloneqq (-1)^{|x|}f_1(x) \cup f_2(y,z) - f_2(x \cup y, z) + f_2(x, y \cup z) - f_2(x,y) \cup f_1(z).
\end{align*}
A direct computation shows that $\kappa(x,y,z)$ is a cocycle in $\Cb$. 
%\Gereon{think about signs!}  
% 
We let $[\kappa] \colon \Hb \otimes \Hb \otimes \Hb \to \Hb$ denote the $\F_p$-linear map of degree $-1$ defined by sending $x \otimes y \otimes z$ to 
the class $[\kappa(x,y,z)]$ in $\Hb$. 
The map $[\kappa]$ defines a Hochschild $(3,-1)$-cocycle.  
The class of $[\kappa]$ in $\HH^{3,-1}(\Hb,\Hb)$ is the canonical $A_3$-class $[m_3]$ of $\Cb$.  
Now let $V \subset H^1$ be a vector subspace. 
We denote by $[m_{3,V}] \in \HH^{3,-1}(A_V,\Hb)$ the image of $[m_3]$ 
under the map $\HH^{3,-1}(\Hb,\Hb) \to \HH^{3,-1}(A_V,\Hb)$ 
induced by the inclusion $A_V \into \Hb$, 
and we call $[m_{3,V}]$ the {\em restricted canonical $A_3$-class}. 
This terminology is justified by the following observation. 
\begin{lemma}\label{lemma:A_3_restricted_class}
The differential graded algebra $\Cb$ is weakly $A_3$-formal 
if and only if 
$[m_{3,V}]$ vanishes for every finite-dimensional vector subspace $V \subset H^1$. 
\end{lemma}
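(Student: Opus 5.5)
The plan is to show, for a fixed finite-dimensional $V\subset H^1$, that some choice of minimal structure has $m_3|_{V^{\otimes 3}}=0$ exactly when $[m_{3,V}]=0$ in $\HH^{3,-1}(A_V,\Hb)$. Weak $A_3$-formality is by Definition \ref{def:weak-An} this condition taken over all $V$, so the lemma will follow at once.

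First I would fix the ambiguity in $m_3$. The transferred $m_3$ is represented by $[\kappa]$, which depends on the choices of $f_1$ and $f_2$. Changing $f_2$ by a map $g\colon \Hb\otimes\Hb\to \Zb$ of degree $-1$ changes $[\kappa]$ by the Hochschild coboundary $\partial^2[g]$. Changing $f_1$ leads to the same kind of modification. Since $H^0=\F_p$ and $H^{\ge 3}=0$, the only relevant piece of $g$ is a map $H^1\otimes H^1\to H^1$, together with pieces $H^1\otimes H^2\to H^2$ and $H^2\otimes H^1\to H^2$. Conversely, every Hochschild $2$-cochain of internal degree $-1$ on $\Hb$ can be realised this way: add $g$ composed with a cocycle section to $f_2$, which does not change $\delta f_2$. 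Homotopy transfer with these modified data is still a valid minimal model. So the set of realisable $m_3$ is exactly $[\kappa]+\im(\partial^2)$ on $\Hb$.

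\emph{($\Leftarrow$)} Suppose $[m_{3,V}]=0$. Then $[\kappa]|_{A_V^{\otimes 3}}=\partial\gamma$ for some normalised Hochschild $2$-cochain $\gamma$ on $A_V$ with values in $\Hb$. I would extend $\gamma$ to a $2$-cochain $\tilde\gamma$ on all of $\Hb$. This is possible because $A_V$ is a graded subspace of $\Hb$ and we work over a field: choose graded complements $H^1=V\oplus V'$ and $H^2=B_V\oplus B'$, and let $\tilde\gamma$ be zero on the extra summands. The one point to check is that $\partial\tilde\gamma$ restricted to $A_V^{\otimes 3}$ equals $\partial\gamma$. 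This holds because the Hochschild differential evaluated on elements of $A_V$ only uses products inside $A_V$, which is a subalgebra, and the values of $\tilde\gamma$ there. Replacing $[\kappa]$ by $[\kappa]-\partial\tilde\gamma$ then gives a minimal structure with $m_3|_{V^{\otimes 3}}=0$.

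\emph{($\Rightarrow$)} If some minimal structure has $m_3|_{V^{\otimes3}}=0$, then its restriction to $A_V$ is zero. The only nonzero component of $m_3$ in internal degree $-1$ is $(H^1)^{\otimes 3}\to H^2$, so on $A_V$ this is just $V^{\otimes3}\to H^2$. Since $[m_3]$ is independent of choices, its restriction $[m_{3,V}]$ is the class of this zero cochain, hence zero. Here I use that restriction along the inclusion $A_V\into\Hb$ is a chain map of Hochschild complexes.

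I expect the main obstacle to be the realisability step in the first part: making sure that every Hochschild coboundary modification really comes from a legitimate change of transfer data. This is the standard fact that the class $[m_3]$ is well defined and any representative of it is realised, as cited from \cite[Theorem 3.9]{PQodd} and \cite{BKS}, so I would invoke it rather than reprove it.
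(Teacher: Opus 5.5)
Your proposal is correct and follows essentially the same route as the paper. The forward direction is immediate from the independence of $[m_3]$ of the choices. For the converse, you extend a primitive of $m_3|_{A_V^{\otimes 3}}$ by zero on complements to a normalised $2$-cochain on $\Hb$ and absorb it into $f_2$ through a cocycle-valued section; this is exactly the adjustment of the transfer data that the paper delegates to \cite[Proof of Theorem 2.7]{PQodd}, and your check that $\partial\tilde\gamma$ agrees with $\partial\gamma$ on $A_V^{\otimes 3}$ makes explicit a step the paper leaves implicit.
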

\begin{proof}
If $\Cb$ is weakly $A_3$-formal in the sense of Definition \ref{def:weak-An}, 
then $m_3|_{V^{\otimes 3}} = 0$, and hence its Hochschild class vanishes as well. 
Now we assume $[m_{3,V}]=0$. 
Then we can choose a normalised Hochschild $2$-cochain $h_V$ on $A_V$ such that
$\partial h_V = m_3|_{A_V^{\otimes3}}$.  
We then extend $h_V$ linearly to a normalised $2$-cochain on $\Hb$. 
Then we use $h_V$ to adjust the initial choice of the homotopy transfer  and hence the minimal structure such that 
the new $m_3$ restricts to zero on $V^{\otimes 3}$, see \cite[Proof of Theorem 2.7]{PQodd}. 
\end{proof}

Next, let $G$ be a profinite group, and let $\Cb=\Cb(G,\F_p)$ 
with cocycles $\Zb = \Zb(G,\Fp)$ and cohomology algebra $\Hb$.  
We assume $H^i = 0$ for $i \ge 3$. 
We identify $\Zh^1 = H^1 = \Hom_{\cont}(G,\Fp)$ 
with the vector space of continuous group homomorphism $G \to \Fp$. 
Let $V \subset H^1$ be a vector subspace. 
In this case, we can simplify the construction of the obstruction class for weak $A_3$-formality as follows. 
Let $\beta_2 \colon R_V \to \Ch^1$ be a linear map such that 
%\begin{align*} 
$\delta \beta_2(x,y) = - x \cup y$.  
%\end{align*}
%
We then define the $\F_p$-linear map $\Omega(\beta_2) \colon \Krel_3(V) \to \Zh^2$ by
\begin{align}\label{eq:def_D_V_general}
\Omega(\beta_2)(u,v,w) \coloneqq -u \cup \beta_2(v,w) - \beta_2(u,v) \cup w.
\end{align}
Again, a direct computation shows that $\Omega(\beta_2)(u,v,w)$ is a cocycle in $\Ch^2$. 
We write 
%\begin{align*}
$[\Omega(\beta_2)] \colon \Krel_3(V) \to H^2$  
%\end{align*}
for the resulting linear map to cohomology. 
If $\gamma \colon R_V \to \Zh^1$ is a cocycle-valued linear map, 
then we have 
$[\Omega(\beta_2 + \gamma)]-[\Omega(\beta_2)] = D_{3,V}[\gamma]$.  
We let $\ob_{3,V}$ 
denote the image of $[\Omega(\beta_2)]$ in $\coker(D_{3,V})$.   

\begin{lemma}
\label{lemma:two-step-comparison}
The isomorphism $\iota_{3,V}$ sends $[m_{3,V}]$ to $\ob_{3,V}$. 
\end{lemma}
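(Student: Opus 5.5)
We compute $\iota_{3,V}[m_{3,V}]$ with the explicit representative $[\kappa]$ and compare it to $[\Omega(\beta_2)]$ on $\Krel_3(V)$. The plan is to choose the transfer data $f_1,f_2$ adapted to $V$, so that on $\Krel_3(V)$ the cocycle $\kappa$ becomes $\Omega(\beta_2)$ up to sign. Any remaining discrepancy should lie in the image of $D_{3,V}$.

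First, the choice of $f_1$. In degree one, $\Zh^1=H^1$ for group cochains, since there are no $1$-coboundaries with trivial coefficients. So on $H^1$ we may, and must, take $f_1(x)=x$. Next, $f_2$ on $V\otimes V$ must satisfy
\[
\delta f_2(x,y)=-x\cup y+f_1(x\cup y).
\]
On $R_V=\ker\mu_V$ we have $x\cup y=0$ in $H^2$, but $f_1(0)=0$, so $\delta f_2=-x\cup y$ on $R_V$. The restriction $f_2|_{R_V}$ is therefore an admissible $\beta_2$. Conversely, any given $\beta_2$ extends to such an $f_2$: pick a complement of $R_V$ in $V^{\otimes2}$, choose $f_2$ there arbitrarily subject to the defining equation, and extend linearly. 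Since $[m_3]$ is independent of choices, we may use this $f_2$.

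Now evaluate $\kappa(u,v,w)$ for $u\otimes v\otimes w\in\Krel_3(V)$, extended linearly to tensors. The middle terms $-f_2(u\cup v,w)+f_2(u,v\cup w)$ involve $f_2$ evaluated on $H^2\otimes H^1$ and $H^1\otimes H^2$. For a tensor in $R_V\otimes V$, the element $\sum u\cup v$ is zero in $H^2$, and linearity gives $\sum f_2(u\cup v,w)=0$. The tensor also lies in $V\otimes R_V$, so the same argument kills the other middle term. Here linear extension over tensors must be handled carefully, because $\kappa$ is defined on $\Hb^{\otimes3}$ and $u\cup v$ means $\mu$ applied to the first two factors. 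The endpoint terms then give
\[
(-1)^{|u|}u\cup f_2(v,w)-f_2(u,v)\cup w=-u\cup\beta_2(v,w)-\beta_2(u,v)\cup w=\Omega(\beta_2)(u,v,w),
\]
because $|u|=1$, and the middle factors lie in $R_V$ by Lemma~\ref{lem:deconcatenation}. Taking cohomology classes shows $[\kappa]|_{\Krel_3(V)}=[\Omega(\beta_2)]$. Hence $\iota_{3,V}[m_{3,V}]=[[\kappa]|_{\Krel_3(V)}]=\ob_{3,V}$.

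Finally, independence of choices. A different admissible $\beta_2$ differs by a cocycle-valued $\gamma$, which changes $[\Omega]$ by $D_{3,V}[\gamma]$; by the identity preceding the lemma, $\ob_{3,V}$ is therefore well defined. Also $[m_{3,V}]$ does not depend on the transfer data. The step I expect to need the most care is the sign bookkeeping: matching the Koszul sign $(-1)^{|x|}$ in $\kappa$ and the sign convention of $\delta f_2$ with the formula for $\Omega$. If a global sign mismatch appears, it is harmless for vanishing, but for the literal statement I would recheck the convention $\delta f_2=-f_1\cup f_1+f_1(\cup)$ against $\delta\beta_2=-\cup$.
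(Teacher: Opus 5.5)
Your proposal is correct and follows essentially the same route as the paper: take $f_1|_{H^1}$ to be the canonical identification, set $\beta_2 = f_2|_{R_V}$, note that the two middle terms of $\kappa$ vanish on $\Krel_3(V)\subset (R_V\otimes V)\cap(V\otimes R_V)$, and identify the endpoint terms with $\Omega(\beta_2)$. Your sign check already comes out exactly, since $|u|=1$ and $\delta f_2=-\cup$ on $R_V$, so the closing hedge about a possible global sign mismatch is unnecessary.
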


\begin{proof}
We may assume that $[m_{3,V}]$ is constructed via maps $(f_1,f_2)$ as above 
with $f_1|_{H^1} \colon H^1 \xto{=} \Zh^1$ being the canonical identification. 
We set $\beta_2 \coloneqq f_2|_{R_V}$. 
By construction, $\iota_{3,V}$ is then given by taking the class of the restriction of $\kappa$ to $K_3(V)$. 
The two middle terms of $\kappa$ vanish on $K_3(V)$, and the remaining terms are exactly $\Omega(\beta_2)$. 
Hence the class of $[\Omega(\beta_2)]$ in $\coker(D_{3,V})$ 
equals both $\iota_{3,V}([m_{3,V}])$ and $\ob_{3,V}$. 
\end{proof}

%%%%%%%%%%%%%%%%

\begin{proposition}
\label{prop:global-two-step}
Let $G$ be a profinite group such that $H^i(G,\F_p) = 0$ for $i \ge 3$. 
Then $\Cb(G,\Fp)$ is weakly $A_3$-formal if and only if, 
for every finite-dimensional vector subspace $V \subset H^1$, 
we have $\ob_{3,V} = 0$. 
Moreover, 
$\Cb(G,\Fp)$ is $A_3$-formal if and only if $\ob_{3,H^1} = 0$. 
\end{proposition}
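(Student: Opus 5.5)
The plan is to chain together Lemma \ref{lemma:A_3_restricted_class}, Lemma \ref{lemma:two-step-comparison} and Theorem \ref{thm:HHcomparison}. The cohomological hypotheses are exactly the standing assumptions. We have $H^0(G,\F_p)=\F_p$ for any profinite group, and $H^i=0$ for $i\ge3$ by hypothesis. Hence Theorem \ref{thm:HHcomparison} applies with $n=3$ to every subspace $V\subset H^1$, and gives an isomorphism $\iota_{3,V}\colon \HH^{3,-1}(A_V,\Hb)\to\coker(D_{3,V})$.

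For the first claim, fix a finite-dimensional $V$. By Lemma \ref{lemma:two-step-comparison}, $\iota_{3,V}([m_{3,V}])=\ob_{3,V}$. Since $\iota_{3,V}$ is injective, $[m_{3,V}]=0$ if and only if $\ob_{3,V}=0$. Lemma \ref{lemma:A_3_restricted_class} then identifies weak $A_3$-formality with the vanishing of all $[m_{3,V}]$. This gives the first equivalence directly. One point to check is that $\ob_{3,V}$ is independent of the choice of $\beta_2$. This follows from the formula $[\Omega(\beta_2+\gamma)]-[\Omega(\beta_2)]=D_{3,V}[\gamma]$. Any two choices of $\beta_2$ differ by a map $R_V\to\Zh^1$, because $\delta$ determines $\beta_2(x,y)$ up to a cocycle.

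For the second claim, take $V=H^1$. Then $A_{H^1}$ is the subalgebra generated by $H^1$, namely $\F_p\oplus H^1\oplus B_{H^1}$. In general $B_{H^1}$ may be strictly smaller than $H^2$, so $A_{H^1}$ need not equal $\Hb$. Recall that $\Cb$ is $A_3$-formal if and only if $[m_3]\in\HH^{3,-1}(\Hb,\Hb)$ vanishes. So I must show that the restriction $\HH^{3,-1}(\Hb,\Hb)\to\HH^{3,-1}(A_{H^1},\Hb)$ is injective; I expect this to be the main obstacle.

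The argument compares normalised cochains directly. A $(3,-1)$-cochain on $\Hb_+$ can only be nonzero on $(H^1)^{\otimes3}$: any argument of degree $2$ forces total degree at least $4$, and the output would then lie in degree at least $3$, where it vanishes. So the $3$-cochains on $\Hb$ and on $A_{H^1}$ are literally the same space. For $2$-cochains of internal degree $-1$, the components are maps $H^1\otimes H^1\to H^1$ and maps out of $H^1\otimes H^2$ or $H^2\otimes H^1$ into $H^2$. Restricting to $A_{H^1}$ replaces $H^2$ by $B_{H^1}$. The differential only evaluates the latter components on products $x\cup y\in B_{H^1}$, so every coboundary for $A_{H^1}$ comes from one for $\Hb$ by extending linearly from $B_{H^1}$ to $H^2$. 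Thus the coboundary spaces agree and restriction is an isomorphism. It remains to note that $[m_{3,H^1}]$ is by definition the image of $[m_3]$. Combining with the case $V=H^1$ of the first part, which does not need finite-dimensionality, gives $A_3$-formality if and only if $\ob_{3,H^1}=0$.

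The only delicate points are that the proofs of Theorem \ref{thm:HHcomparison} and Lemma \ref{lemma:two-step-comparison} do not use finite-dimensionality of $V$, and the identification of the coboundary spaces just described. The finite-intersection annihilator argument in Theorem \ref{thm:HHcomparison} involves only the $n-1$ subspaces $X_j$, so it is valid for infinite-dimensional $V$ over a field.
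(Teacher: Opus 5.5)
Your proposal is correct and follows the paper's proof: the first claim uses the same chain of Lemma~\ref{lemma:two-step-comparison}, the injectivity of $\iota_{3,V}$, and Lemma~\ref{lemma:A_3_restricted_class}. For the second claim, the paper reruns the proof of Lemma~\ref{lemma:A_3_restricted_class} with $V=H^1$ to arrange $m_3|_{(H^1)^{\otimes 3}}=0$ and then notes that this is the only possibly nonzero component of $m_3$; that argument implicitly uses your observation that coboundaries on $A_{H^1}$ extend linearly from $B_{H^1}$ to $H^2$, so your explicit isomorphism $\HH^{3,-1}(\Hb,\Hb)\cong\HH^{3,-1}(A_{H^1},\Hb)$ just packages the same step more cleanly.
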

\begin{proof}
By Lemma \ref{lemma:two-step-comparison}, 
we have $\ob_{3,V} = 0$ if and only if $[m_{3,V}]=0$. 
Since $\Cb(G,\Fp)$ is weakly $A_3$-formal if and only if  $[m_{3,V}]=0$ 
for every finite-dimensional $V \subset H^1$, 
this proves the first assertion. 
For the second assertion, suppose that $\ob_{3,H^1} = 0$. 
By Lemma  \ref{lemma:two-step-comparison}, the restricted canonical class $[m_{3,H^1}]$ vanishes. 
Repeating the proof of Lemma \ref{lemma:A_3_restricted_class} with  $V=H^1$, 
we may change the transferred minimal structure so that $m_3|_{(H^1)^{\otimes 3}} = 0$. 
Since $H^0(G,F_p) =\F_p$ and $H^i(G,\F_p) = 0$ for $i \ge 3$, this is the only potentially nonzero component of $m_3$. 
Thus, $m_3=0$. 
The other implication is immediate.  
\end{proof}

%%%%%%%%%%%%%%%%
%
%%%%%%%%%%%%%%%%

\section{Embedding problems and weak \texorpdfstring{$A_n$}{An}-formality}\label{sec:finite-embedding-comparison}

In this section, we characterise  weak and genuine $A_n$-formality via embedding problems.
We let $G$ denote a profinite group. 

\begin{definition}
An {\em embedding problem} for $G$ is a solid diagram of continuous group homomorphisms 
\begin{align*}%\label{eq:def_embedding_problem}
\xymatrix{
 & G \ar[d]%^{\phi}
 \ar@{.>}[ld] %_{\widetilde{\phi}} %\ar@{.>}[ld]_{\widetilde{\phi}}
\\
B \ar@{->>}[r]^{\phi} & A}
\end{align*} 
in which $A$ and $B$ are topological groups, and  $\phi \colon B \twoheadrightarrow A$ is surjective.  
We refer to a continuous homomorphism $G \to B$ which makes the diagram commutative 
as a {\em solution} of the embedding problem. 
The {\em kernel} of the embedding problem is defined to be the kernel of $\phi$. 
We call the embedding problem {\it finite} if $A$ and $B$ are finite discrete groups. 
\end{definition} 

%%%%%%%%%%

\begin{remark} 
The literature on embedding problems often refers to a homomorphism $G \to B$ which makes the diagram commutative as 
a {\em weak} solution and reserves the term {\em solution}  for a weak solution which is  {\em surjective}. 
Since surjectivity of the solution does not matter in this paper, 
we use the shorter terminology and do not make the above distinction. 
\end{remark}

%%%%%%%%%%%%%%%%

\begin{notn}\label{notn:dual}
We let $p$ be a fixed prime number, and $G$ be a profinite group. 
We recall from \cite[Definition 1.1.10]{NSW} that the Pontryagin dual 
of a Hausdorff, abelian and locally compact group $A$ is the group $A^{\vee} = \Hom_{\mathrm{cont}}(A,\R/\Z)$ 
of continuous group homomorphisms from $A$ to $\R/\Z$ where the latter is equipped with the quotient topology inherited from $\R$. 
The group $A^{\vee}$ is considered as a topological group with the compact-open topology. 
We recall that Pontryagin duality states that the canonical homomorphism $A \to (A^{\vee})^{\vee}$ is an isomorphism of topological groups 
(see \cite[Theorem 1.1.11]{NSW} and the references therein). 
If $V$ is an $\F_p$-vector space, 
then $V$, considered with the discrete topology, is a Hausdorff, abelian and locally compact group. 
Every group homomorphism $V \to \R/\Z$ has image in the $p$-torsion subgroup $(\R/\Z)[p] =\frac{1}{p}\Z/\Z$. 
Using the identification of additive groups 
$\frac{1}{p}\Z/\Z \xto{\cong} \F_p$, 
$\frac{a}{p} + \Z \mapsto \overline{a}$, 
we obtain a natural identification $V^\vee \cong \Hom_{\F_p}(V,\F_p)$. 
We equip $\Hom_{\F_p}(V,\F_p)$ with the compact-open topology. 
Since $V$ is discrete, its compact subsets are finite. 
Hence the compact-open topology on $V^\vee$ is the topology of pointwise convergence on finite subsets of $V$.
In particular, $V^\vee$ is a compact Hausdorff totally-disconnected topological $\F_p$-vector space. 
Hence, for a discrete $\F_p$-vector space, the Pontryagin dual equals the algebraic dual equipped with the topology of pointwise convergence.
If $V$ is finite-dimensional, then $V^\vee$ is finite and its topology is discrete. 
\end{notn}

\begin{notn}
For a topological $\F_p$-vector space $M$, 
we write $\Cb(G,M)$ for the complex of continuous inhomogeneous cochains, 
where we regard $M$, unless otherwise stated, as a trivial $G$-module.
We denote the cocycles by $\Zb(G,M)$ and the corresponding cohomology groups by $\Hb(G,M)$.  
For $M = \F_p$, we often just write $\Cb$, $\Zb$, and $\Hb$. 
In this case, $\Hb$ is the usual graded cohomology algebra.
\end{notn}

%%%%%%%%%%%%%%%%%%%

Throughout this section, 
we let $G$ be a profinite group such that $H^i(G,\F_p) = 0$ for $i \ge 3$, 
and we let $V \subseteq H^1(G,\F_p)$ be a vector subspace. 

\begin{definition}
We equip $V$ and every subspace $K_m(V)\subseteq V^{\otimes m}$, with the discrete topology. 
We define 
\[
P_m(V) \coloneqq \Krel_m(V)^\vee  % = \Hom_{\Fp}(\Krel_m(V),\Fp) 
\]
and equip it with the compact-open topology. 
Thus, 
$P_m(V)$ is a compact Hausdorff totally disconnected topological
$\Fp$-vector space. 
In particular, if $V$ is finite-dimensional, then $P_m(V)$ is finite and its topology is discrete.  
Dualising the deconcatenation inclusion
$\Delta_{i,j} \colon K_{i+j}(V) \to K_i(V)\otimes K_j(V)$ 
of Lemma \ref{lem:deconcatenation} 
defines a bilinear product  
\begin{align}\label{eq:product_on_P}
P_i(V)\times P_j(V) \to P_{i+j}(V),
~ 
(\lambda_i \lambda_j)(x) \coloneqq (\lambda_i \otimes \lambda_j)\left(\Delta_{i,j}(x)\right). 
\end{align}
\end{definition}

%%%%%%%%%%%%

\begin{lemma}
The product \eqref{eq:product_on_P} is continuous. 
\end{lemma}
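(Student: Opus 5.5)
The plan is to reduce continuity of the bilinear map $P_i(V)\times P_j(V)\to P_{i+j}(V)$ to a finiteness statement. The topology on each $P_m(V)=\Krel_m(V)^\vee$ is pointwise convergence on the discrete space $\Krel_m(V)$; this was recorded in Notation~\ref{notn:dual}. A basic open neighbourhood of a point $\mu\in P_{i+j}(V)$ therefore has the form
\[
U(\mu;x_1,\dots,x_r)=\{\mu'\colon \mu'(x_k)=\mu(x_k),\ 1\le k\le r\}
\]
for finitely many $x_k\in\Krel_{i+j}(V)$. It thus suffices to show the following. For every $x\in\Krel_{i+j}(V)$, the map $(\lambda_i,\lambda_j)\mapsto(\lambda_i\lambda_j)(x)$ from $P_i(V)\times P_j(V)$ to the discrete space $\F_p$ is locally constant. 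A product topology argument then handles finitely many $x_k$ at once, by intersecting the neighbourhoods obtained for each.

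Fix $x\in\Krel_{i+j}(V)$. By Lemma~\ref{lem:deconcatenation}, $\Delta_{i,j}(x)$ lies in $\Krel_i(V)\otimes\Krel_j(V)$. An element of an algebraic tensor product is a finite sum, so we can write
\[
\Delta_{i,j}(x)=\sum_{k=1}^{s} y_k\otimes z_k, \qquad y_k\in\Krel_i(V),\ z_k\in\Krel_j(V).
\]
Then $(\lambda_i\lambda_j)(x)=\sum_k\lambda_i(y_k)\lambda_j(z_k)$. This value depends only on the finitely many values $\lambda_i(y_1),\dots,\lambda_i(y_s)$ and $\lambda_j(z_1),\dots,\lambda_j(z_s)$. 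Hence the map is constant on the open set
\[
\{\lambda_i'\colon \lambda_i'(y_k)=\lambda_i(y_k)\ \forall k\}\times\{\lambda_j'\colon \lambda_j'(z_k)=\lambda_j(z_k)\ \forall k\},
\]
which is a neighbourhood of $(\lambda_i,\lambda_j)$ in the product topology. This gives the required local constancy.

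The only point needing care is that $\Delta_{i,j}(x)$ is a finite sum of pure tensors even when $V$ is infinite-dimensional. This holds because we work with the algebraic tensor product of discrete vector spaces, not a completed one. I do not expect any further obstacle. When $V$ is finite-dimensional, all the spaces involved are finite and discrete, and continuity is automatic.
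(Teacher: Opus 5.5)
Your proof is correct and follows the same route as the paper. Both write $\Delta_{i,j}(x)$ as a finite sum of pure tensors in $\Krel_i(V)\otimes\Krel_j(V)$, observe that $(\lambda_i\lambda_j)(x)$ then depends only on finitely many coordinates of $\lambda_i$ and $\lambda_j$, and conclude from the pointwise-convergence description of the topologies; you additionally spell out the basic-neighbourhood bookkeeping that the paper leaves implicit.
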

\begin{proof}
For each fixed $x\in K_{i+j}(V)$, the 
tensor $\Delta_{i,j}(x)$ is a finite sum
$\Delta_{i,j}(x)=\sum_{r=1}^s a_r\otimes b_r$, 
so  
$(\lambda_i \lambda_j)(x)=\sum_{r=1}^s\lambda_i(a_r)\lambda_j(b_r)$  
depends on only finitely many coordinates of $\lambda_i$ and $\lambda_j$. 
Continuity now follows from the definition of the topology on both sides. 
\end{proof}

%%%%%%%%%%%%%%%%%%%

\begin{lemma}\label{lemma:CP_identifications}
Let $V \subset H^1(G,\Fp)$ be a vector subspace. 
For every $q \ge 0$ and $m\ge 1$, we have natural identifications  
\begin{align}\label{eq:identification_CPnV}
\Ch^q(G,P_m(V)) \cong \Hom_{\Fp} (K_m(V), \Ch^q(G,\Fp)), 
\end{align}
and 
$H^q(G,P_m(V)) \cong \Hom_{\Fp}(K_m(V), H^q(G,\Fp))$. 
\end{lemma}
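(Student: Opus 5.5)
The plan is to reduce the statement to a pointwise description of continuous cochains with values in a profinite $\F_p$-vector space. We begin with $V$ arbitrary, so $P_m(V)=\Krel_m(V)^\vee$ may be infinite. A continuous cochain $\phi\colon G^q\to P_m(V)$ is paired with elements $x\in\Krel_m(V)$ to define
\[
\Phi(x)(g_1,\ldots,g_q)\coloneqq \phi(g_1,\ldots,g_q)(x).
\]
Evaluation at $x$, $P_m(V)\to\F_p$, is continuous by the description of the compact-open topology in Notation \ref{notn:dual}. Hence $\Phi(x)$ is a continuous $\F_p$-valued cochain, and $x\mapsto\Phi(x)$ is $\F_p$-linear. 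This gives the map from left to right in \eqref{eq:identification_CPnV}.

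For the inverse, take a linear map $\Psi\colon \Krel_m(V)\to\Ch^q(G,\F_p)$ and define $\phi(g)(x)\coloneqq\Psi(x)(g)$. For fixed $g$ this is linear in $x$, so $\phi(g)\in P_m(V)$. The one point requiring care, and the main obstacle, is continuity of $\phi$ when $\Krel_m(V)$ is infinite-dimensional. Since $P_m(V)$ carries the topology of pointwise convergence, it suffices to check that each coordinate $g\mapsto\phi(g)(x)=\Psi(x)(g)$ is continuous; this holds because $\Psi(x)$ is a continuous cochain. Alternatively, one can realise $P_m(V)$ as the inverse limit of the finite groups $S^\vee$ over finite-dimensional $S\subset\Krel_m(V)$ and use the universal property of limits. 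The two constructions are mutually inverse and natural in $V$ and $m$ (with respect to inclusions, using Lemma \ref{lemma:K_VW}), which gives \eqref{eq:identification_CPnV}.

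Next, since the $G$-action on $P_m(V)$ is trivial, the inhomogeneous differential is computed pointwise, so $\delta$ commutes with evaluation at each $x$. Therefore \eqref{eq:identification_CPnV} is an isomorphism of complexes, with $\Hom_{\F_p}(\Krel_m(V),-)$ applied termwise on the right. Cocycles correspond to maps into $\Zh^q$. Because $\Hom_{\F_p}(\Krel_m(V),-)$ is exact (every $\F_p$-vector space is free, hence projective), it commutes with taking cohomology:
\[
H^q(G,P_m(V))\cong H^q\bigl(\Hom_{\F_p}(\Krel_m(V),\Cb)\bigr)\cong \Hom_{\F_p}(\Krel_m(V),\Hb).
\]
Concretely, a map into cocycles that becomes zero in cohomology lifts to a map into $\Ch^{q-1}$ by choosing a splitting on a basis of $\Krel_m(V)$. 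This completes the argument.
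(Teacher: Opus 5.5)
Your proposal is correct and follows essentially the same route as the paper. In both, continuity into the pointwise-convergence topology on $P_m(V)$ is checked coordinatewise to get the cochain-level identification, and exactness of $\Hom_{\F_p}(\Krel_m(V),-)$ then passes it to cohomology; you additionally spell out that the identification commutes with $\delta$ because the $G$-action on $P_m(V)$ is trivial, which the paper leaves implicit.
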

\begin{proof}
Since $P_m(V)$ is equipped with the topology of pointwise convergence, 
a map 
$c \colon G^q \to P_m(V)$ 
is continuous if and only if every coordinate
$g \mapsto c(g)(x)$ for $x \in K_m(V)$ 
is continuous. 
This proves \eqref{eq:identification_CPnV}. 
Since the functor $\Hom_{\Fp}(K_m(V),-)$ is exact over a field, 
taking cohomology implies the second isomorphism. 
\end{proof}

%%%%%%%%%%%%%%%%

%%%%%%%%%%%%%%%%

Let $\Fh(V)$ denote the directed poset of finite-dimensional 
$\F_p$-subspaces of $V$.  
Every element of $(V)^{\otimes m}$ is a finite sum of pure tensors and therefore belongs to $W^{\otimes m}$ 
for some $W \in  \Fh(V)$.  
Hence Lemma \ref{lemma:K_VW}  %\eqref{eq:intersection-property} 
implies
\begin{equation*}%\label{eq:filtered-union}
K_m(V) = \bigcup_{W\in\mathcal F(V)}K_m(W).
\end{equation*}
\begin{lemma}
For every $m \ge 1$, 
inclusions of vector spaces induce an isomorphism of topological vector spaces 
\begin{equation}\label{eq:P-limit}
P_m(V)\xrightarrow{\cong}
\varprojlim_{W\in\mathcal F(V)}P_m(W). 
\end{equation}
\end{lemma}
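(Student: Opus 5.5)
The plan is to reduce the statement to Pontryagin duality together with the filtered-union description $K_m(V)=\bigcup_{W\in\mathcal F(V)}K_m(W)$ established just above. First I make the maps explicit. For $W\subseteq W'$ in $\mathcal F(V)$, Lemma \ref{lemma:K_VW} gives $K_m(W)=K_m(W')\cap W^{\otimes m}\subseteq K_m(W')$, and restriction of functionals defines $P_m(W')\to P_m(W)$. This makes $\{P_m(W)\}_{W\in\mathcal F(V)}$ an inverse system of finite discrete $\F_p$-vector spaces. Likewise, restriction along $K_m(W)\subseteq K_m(V)$ gives compatible maps $P_m(V)\to P_m(W)$, and hence the map \eqref{eq:P-limit}, $\lambda\mapsto(\lambda|_{K_m(W)})_W$. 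It is clearly $\F_p$-linear.

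Next I check bijectivity algebraically. For injectivity: if $\lambda|_{K_m(W)}=0$ for all $W$, then $\lambda=0$, since the $K_m(W)$ cover $K_m(V)$. For surjectivity: given a compatible family $(\lambda_W)$, define $\lambda(x)\coloneqq\lambda_W(x)$ for any $W$ with $x\in K_m(W)$. This is well defined because $\mathcal F(V)$ is directed: if $x\in K_m(W_1)\cap K_m(W_2)$, then both $\lambda_{W_1}(x)$ and $\lambda_{W_2}(x)$ agree with $\lambda_{W_1+W_2}(x)$ by compatibility. Linearity follows the same way, by evaluating finitely many elements in a common $K_m(W)$. In short, this is the standard fact that $\Hom(\varinjlim K_m(W),\F_p)\cong\varprojlim\Hom(K_m(W),\F_p)$ for a filtered colimit, and $K_m(V)$ is that filtered colimit, i.e. union.

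The main point needing care is topological. I need the limit topology (the subspace topology from $\prod_W P_m(W)$, each factor discrete) to coincide with the compact-open topology on $P_m(V)$, which by Notation \ref{notn:dual} is pointwise convergence on finite subsets of $K_m(V)$. Basic open neighbourhoods of $0$ on the left are $\{\lambda:\lambda|_S=0\}$ for finite $S\subset K_m(V)$. Any such $S$ lies in a single $K_m(W)$, so this set contains the preimage of $0$ under the projection to $P_m(W)$, which is open in the limit topology. Conversely, the preimage of $0$ under the projection to $P_m(W)$ is $\{\lambda:\lambda|_{K_m(W)}=0\}$. Since $K_m(W)$ is finite-dimensional (indeed finite), this is a pointwise-convergence open set. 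Both topologies are group topologies, so matching neighbourhood bases of $0$ shows the map is a homeomorphism.

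Alternatively, I can note that a continuous bijection from a compact space to a Hausdorff space is a homeomorphism, since the limit of finite discrete spaces is Hausdorff. Either argument finishes the proof.
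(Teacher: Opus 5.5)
Your proposal is correct and follows the same route as the paper: the bijection comes from $K_m(V)=\bigcup_{W\in\mathcal F(V)}K_m(W)$, since a compatible family of functionals glues uniquely to a functional on the union, and the homeomorphism comes from identifying pointwise convergence on finite subsets with the inverse-limit topology. You make explicit what the paper leaves implicit, namely well-definedness and linearity via directedness of $\mathcal F(V)$ and the matching of neighbourhood bases of $0$; your compact-to-Hausdorff argument is also a valid way to finish.
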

\begin{proof}
Since $P_m(W) = K_m(W)^\vee$,  
a compatible family of functionals $(\lambda_W)_W$ with $\lambda_W \in K_m(W)^\vee$ 
defines a unique functional on the union $K_m(V)$. 
Conversely, a functional on  $K_m(V)$ restricts to such a compatible family. 
Thus, we get an isomorphism of vector spaces  
$P_m(V)\xrightarrow{\cong}  \varprojlim_{W\in\mathcal F(V)}P_m(W)$.  
Moreover, this is a homeomorphism since 
the compact-open topology on the dual of a discrete vector space 
is the topology of pointwise convergence on finite subsets, 
which is the 
inverse-limit topology on the right-hand side of \eqref{eq:P-limit}.
\end{proof}

%%%%%%%%%%%%%%%%%

\begin{definition}
For $n \ge 2$, we define 
the graded $\Fp$-vector space
$\uA_n(V) \coloneqq \bigoplus_{m=1}^n P_m(V)$,  
and consider $\uA_n(V)$ with the products \eqref{eq:product_on_P} 
and with every product of total degree $>n$ declared zero. 
We equip $\uA_n(V)$ with the finite product topology. 
We define % the group 
\[
\UA_n(V) \coloneqq 1+\uA_n(V)
\] 
with multiplication $(1+x)(1+y)=1+x+y+xy$ 
and equip $\UA_n(V)$ with the topology defined by the bijection 
$\uA_n(V) \to \UA_n(V)$, $x\mapsto1+x$. 
\end{definition}

%%%%%%%%%%%%

\begin{lemma}
\label{lemma:nilpotent-algebra}
For $n \ge 2$, $\uA_n(V)$ is an associative topological nilpotent $\Fp$-algebra, 
and $\UA_n(V)$ is a compact Hausdorff totally disconnected topological group.  
If $V$ is finite-dimensional, then $\UA_n(V)$ is a finite discrete $p$-group. 
\end{lemma}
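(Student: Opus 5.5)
The plan is to check associativity of the product on $\uA_n(V)$ by duality with coassociativity of deconcatenation, and then read off the group structure and the topology.

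\emph{Associativity.} For homogeneous $\lambda_i\in P_i(V)$, $\lambda_j\in P_j(V)$, $\lambda_k\in P_k(V)$, both $(\lambda_i\lambda_j)\lambda_k$ and $\lambda_i(\lambda_j\lambda_k)$ are evaluated on $x\in\Krel_{i+j+k}(V)$. They are obtained by applying $\lambda_i\otimes\lambda_j\otimes\lambda_k$ to $(\Delta_{i,j}\otimes\id)\Delta_{i+j,k}(x)$ and to $(\id\otimes\Delta_{j,k})\Delta_{i,j+k}(x)$, respectively. By the compatibility statement of Lemma \ref{lem:deconcatenation}, both tensors equal the image of $x$ under the iterated deconcatenation into $\Krel_i(V)\otimes\Krel_j(V)\otimes\Krel_k(V)$. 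Indeed, all these maps are restrictions of the identity of $V^{\otimes(i+j+k)}$.

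If $i+j+k>n$, both products are zero. If $i+j\le n$ but $i+j+k>n$, both sides are still zero, because the truncation only kills total degree. Bilinearity gives associativity on the whole of $\uA_n(V)$.

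\emph{Nilpotence.} Every element of $\uA_n(V)$ has components of degree at least $1$. Hence any product of $n+1$ elements lands in total degree $>n$ and vanishes, so $\uA_n(V)^{n+1}=0$.

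\emph{Topological algebra.} Continuity of the multiplication follows from the preceding lemma on the continuity of \eqref{eq:product_on_P}, applied componentwise in the finite product topology. Addition and scalar multiplication are continuous since each $P_m(V)$ is a topological vector space.

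\emph{Group structure.} Using the bijection $x\mapsto 1+x$, the multiplication on $\UA_n(V)$ is $(x,y)\mapsto x+y+xy$, so it is continuous and associative, with identity $1$. The inverse is given explicitly by the finite sum
\[
(1+x)^{-1}=\sum_{k=0}^{n}(-x)^k ,
\]
which makes sense because $x^{n+1}=0$. This is a polynomial expression in $x$, hence continuous.

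\emph{Topology and finiteness.} Compactness, the Hausdorff property and total disconnectedness are inherited from the finite product $\prod_{m\le n}P_m(V)$, since each factor has these properties by Notation \ref{notn:dual}. If $V$ is finite-dimensional, each $\Krel_m(V)$ is finite-dimensional, so each $P_m(V)$ is finite and discrete. Then $\UA_n(V)$ is finite of order
\[
p^{\sum_{m=1}^{n}\dim\Krel_m(V)},
\]
and is therefore a $p$-group.

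The only step requiring real care is associativity, specifically that the truncation convention and the deconcatenation maps are compatible. The key point is that the iterated inclusions all factor through $V^{\otimes(i+j+k)}$ as the identity, which makes coassociativity immediate.
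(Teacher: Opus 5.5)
Your proof is correct and follows the same route as the paper. Associativity comes from dualising the compatibility of the deconcatenation inclusions in Lemma \ref{lem:deconcatenation}, nilpotence from the grading, and continuity of the operations from the preceding lemma; inverses are given by the finite geometric series, and finiteness by counting $p^{\dim \uA_n(V)}$ elements. You simply spell out details the paper leaves implicit, such as the truncation check in the associativity argument and the inheritance of compactness, the Hausdorff property and total disconnectedness from the factors $P_m(V)$.
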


\begin{proof}
Associativity follows from dualising Lemma \ref{lem:deconcatenation}. 
Nilpotence follows from the grading.  
Since the products on the $P_m(V)$ are continuous, 
the operations on $\uA_n$ and $\UA_n(V)$ are continuous. 
We note that taking inverses in $\UA_n(V)$ is continuous 
since $\uA_n(V)^{n+1}=0$ and
$(1+x)^{-1}=1-x+x^2-\cdots+(-1)^n x^n$.  
Finally, if $V$ is finite-dimensional, then 
$1+\uA_n(V)$ has cardinality $p^{\dim\uA_n(V)}$, so it is a finite $p$-group.
\end{proof}

\begin{remark}\label{rem:kernel_is_elementary_abelian}
In degree one, we have 
$P_1(V)=V^\vee$. 
We write $N_{n,V} \coloneqq 1+\bigoplus_{m=2}^nP_m(V)$.  
%which is an additive group. 
%
It is a closed normal subgroup of $\UA_n(V)$, 
equal to the kernel of $\UA_n(V) \twoheadrightarrow V^\vee$. 
We then obtain a short exact sequence of topological groups 
\begin{equation*}%\label{eq:higher-extension}
  1\longrightarrow N_{n,V}\longrightarrow\UA_n(V)
  \longrightarrow V^\vee \longrightarrow1.
\end{equation*}
For $n=3$, multiplication on $N_{3,V}$ agrees with addition on $P_2(V) \oplus P_3(V)$, %the additive kernel 
so $N_{3,V}$ is elementary abelian.  
For $n\ge4$, however, $N_{n,V}$ need not be abelian. 
\end{remark}

%%%%%%%%%%%%%%%%

\begin{lemma}
For every $n \ge 2$, 
inclusions of vector spaces induce an isomorphism of topological groups
\begin{equation}\label{eq:U-limit}
\UA_n(V) \xrightarrow{\cong} \varprojlim_{W\in\mathcal F(V)} \UA_n(W).
\end{equation}
In particular, $\UA_n(V)$ is a pro-$p$ group.  
\end{lemma}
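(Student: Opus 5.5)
The plan is to assemble \eqref{eq:U-limit} degree by degree from the isomorphisms \eqref{eq:P-limit}, and then to check that the group structure and the topology match. Inclusions $W\subseteq W'$ in $\mathcal F(V)$ induce, by Lemma \ref{lemma:K_VW}, inclusions $K_m(W)\subseteq K_m(W')$. Dualising gives restriction maps $P_m(W')\to P_m(W)$.

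First we check that these restrictions are compatible with the deconcatenation maps $\Delta_{i,j}$ of Lemma \ref{lem:deconcatenation}. This holds because $\Delta_{i,j}$ on $K_{i+j}(W)$ is the restriction of $\Delta_{i,j}$ on $K_{i+j}(W')$: both are induced by the ordinary deconcatenation of $V^{\otimes(i+j)}$. Hence restriction $P_\bullet(W')\to P_\bullet(W)$ is multiplicative for the products \eqref{eq:product_on_P}. The same holds for restriction from $V$ to any $W$, and products of total degree $>n$ are zero on both sides. It follows that the maps $\uA_n(W')\to\uA_n(W)$ and $\uA_n(V)\to\uA_n(W)$ are continuous algebra homomorphisms. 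Consequently $\UA_n(V)\to\UA_n(W)$, $1+x\mapsto 1+x|_W$, is a continuous group homomorphism compatible with the transition maps, and so induces a continuous homomorphism $\Phi\colon\UA_n(V)\to\varprojlim_W\UA_n(W)$.

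Next we show that $\Phi$ is bijective and a homeomorphism. As a set and as a space, $\UA_n(V)$ is identified with $\uA_n(V)=\bigoplus_{m=1}^n P_m(V)$ via $x\mapsto 1+x$, and likewise for each $W$. Inverse limits commute with finite products, both of sets and of topological spaces. Therefore $\Phi$ is identified with the finite product over $m=1,\dots,n$ of the maps \eqref{eq:P-limit}. Each of these is a homeomorphism, so $\Phi$ is a bijective homeomorphism. Since $\Phi$ is also a homomorphism, it is an isomorphism of topological groups. Alternatively, $\Phi$ is a continuous bijection from a compact space (Lemma \ref{lemma:nilpotent-algebra}) to a Hausdorff space, which already gives continuity of the inverse.

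Finally, the pro-$p$ claim follows because each $\UA_n(W)$ with $W$ finite-dimensional is a finite $p$-group by Lemma \ref{lemma:nilpotent-algebra}, and $\mathcal F(V)$ is directed. I expect no real obstacle. The only point requiring care is the multiplicativity of restriction in the first step, that is, that deconcatenation commutes with the inclusions $K_m(W)\subseteq K_m(V)$. This is immediate from the description of $\Delta_{i,j}$ as the restriction of ordinary deconcatenation.
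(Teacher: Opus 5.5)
Your proposal is correct and follows the paper's proof. The paper also uses naturality of deconcatenation under inclusions to make the degreewise isomorphisms \eqref{eq:P-limit} multiplicative, and then, since only finitely many degrees occur, assembles them into an isomorphism of topological nilpotent algebras $\uA_n(V)\cong\varprojlim_W \uA_n(W)$ and hence of the groups $\UA_n$. The only additions in your version are the explicit pro-$p$ conclusion, from the inverse limit of the finite $p$-groups $\UA_n(W)$, which the paper leaves implicit, and the optional compact-to-Hausdorff shortcut.
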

\begin{proof} 
Since the deconcatenation maps are natural with respect to inclusions of vector
spaces, isomorphism \eqref{eq:P-limit} respects the
products on the graded pieces. 
%
%Since only the finitely many degrees $1,\ldots,n$ occur, 
Since only finitely many degrees occur, 
they combine to an isomorphism of topological nilpotent
algebras
$\uA_n(V)\xrightarrow{\cong}
\varprojlim_{W\in\mathcal F(V)} \uA_n(W)$, 
and hence to an isomorphism of topological groups
$\UA_n(V)\xrightarrow{\cong} \varprojlim_{W\in\mathcal F(V)} \UA_n(W)$.
\end{proof}

%%%%%%%%%%%%%

Now we can relate weak $A_n$-formality to the existence of a solution of certain embedding problems. 
We let $V \subset H^1(G,\Fp)$ be a vector subspace. 
We first consider the cases $n=2$ and $n=3$ and will then explain how to generalise to arbitrary $n$. 

\begin{definition}
The {\em tautological character} is defined as the group homomorphism 
%\begin{align*}
$\chi_V \colon G \to V^\vee$, 
$\chi_V(g)(v) \coloneqq  v(g)$. 
%\end{align*}
%
\end{definition}

%%%%%%%%%%%

\begin{lemma}\label{lemma:chi_V_continuous_surjective}
The group homomorphism $\chi_V$ is continuous  and surjective. 
\end{lemma}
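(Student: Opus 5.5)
The plan is to check the homomorphism property, then continuity against the topology of pointwise convergence from Notation \ref{notn:dual}, and finally surjectivity, first for finite-dimensional $V$ and then in general by a compactness argument.

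\textbf{Homomorphism.} Each $v\in V\subset H^1(G,\Fp)=\Hom_{\cont}(G,\Fp)$ is a homomorphism. Hence $\chi_V(gh)(v)=v(gh)=v(g)+v(h)$, so $\chi_V$ is a group homomorphism.

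\textbf{Continuity.} A map into $V^\vee$ is continuous if and only if each coordinate $g\mapsto \chi_V(g)(v)=v(g)$ is continuous, for $v\in V$. This holds because each $v$ is a continuous character.

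\textbf{Surjectivity for finite-dimensional $V$.} Suppose $\chi_V$ were not surjective. Its image is then a proper $\Fp$-subspace of the finite-dimensional space $V^\vee$, so some nonzero $v\in V=(V^\vee)^\vee$ vanishes on it. Concretely, this means $v(g)=0$ for all $g\in G$, so $v=0$, a contradiction.

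\textbf{Surjectivity in general.} The image $\chi_V(G)$ is compact, since $G$ is compact, and it is closed because $V^\vee$ is Hausdorff. It therefore suffices to show that $\chi_V(G)$ is dense. A basic open set around $\lambda\in V^\vee$ is given by prescribing the values of $\lambda$ on a finite subset of $V$. Let $W\in\Fh(V)$ be the span of that finite subset. Then $\chi_V$ composed with the restriction $V^\vee\to W^\vee$ equals $\chi_W$. By the finite-dimensional case, $\chi_W$ is surjective, so some $g\in G$ satisfies $\chi_V(g)|_W=\lambda|_W$. This gives density, and hence $\chi_V(G)=V^\vee$.

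Equivalently, one can use the identification $V^\vee\cong\varprojlim_{W}W^\vee$, which is \eqref{eq:P-limit} with $m=1$. In that formulation the result is the standard fact that a compatible family of surjections from a compact group onto finite groups induces a surjection onto the limit.

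The only point needing care is this last compactness and closedness step. It is routine once we note that the image is compact in a Hausdorff space.
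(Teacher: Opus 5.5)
Your proof is correct and follows the same outline as the paper. Continuity is checked coordinatewise; the paper phrases this through the identification \eqref{eq:P-limit} and the characters $\chi_W$, which amounts to the same thing. Surjectivity comes from the image $I=\chi_V(G)$ being compact, hence closed, and dense because no nonzero $v\in V$ vanishes on all of $G$. The only difference is in the density step: the paper cites the Pontryagin-duality fact that the closure of $I$ is the annihilator of $I^{\ann}=0$, whereas you prove density directly by reducing to finite-dimensional $W\in\Fh(V)$ via $\chi_V(g)|_W=\chi_W(g)$ and elementary linear algebra, which makes your argument self-contained.
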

\begin{proof}
For each $W \in \Fh(V)$, the composition 
\[
G \xrightarrow{\chi_V} V^\vee \xrightarrow{\cong} \varprojlim_{W\in\mathcal F(V)} W^\vee \to W^\vee  
\]
of $\chi_V$ with \eqref{eq:P-limit} and the canonical projection 
equals $\chi_W$. 
Since \eqref{eq:P-limit} is a homeomorphism and each $\chi_W$ is continuous, 
we get that $\chi_V$ is continuous. 
Now we show that $\chi_V$ is surjective. 
Let $I \coloneqq \im \chi_V$, and let 
$I^\ann = \{ v \in V : \lambda(v) = 0 ~\text{for every}~ \lambda \in I\}$ denote its annihilator. 
By definition of $\chi_V$, 
$v \in I^\ann$ if and only if $v(g)=0$ for every $g \in G$. 
However, since $V \subseteq H^1(G,\F_p)$, 
$v$ itself is a continuous group homomorphism $G \to \F_p$. 
If it vanishes for every $g \in G$, then $v = 0$. 
Thus, we get $I^\ann = 0$. 
By Pontryagin duality, 
the closure of $I$ is the annihilator of $I^\ann$. 
Since $I^\ann = 0$, 
we get that the closure of $I$ is $V^\vee$.   
Since $G$ is a profinite group, it is compact. 
Since $\chi_V$ is continuous, $I = \chi_V(G)$ is compact as well. 
Since $V^\vee$ is Hausdorff, the compact subset $I \subseteq V^\vee$ is closed in $V^\vee$. 
This shows that $I$ is a dense and closed subset of $V^\vee$, 
and hence $\im \chi_V = V^\vee$. 
\end{proof}

\begin{notn}\label{notn:chiV_cup_chiV}
We write $\chi_V \cup \chi_V \colon G\times G\to (V\otimes V)^\vee$ 
for the map defined  by $\bigl((\chi_V \cup \chi_V)(g_1,g_2)\bigr)(v\otimes w) = v(g_1)w(g_2)$. 
Composing with the restriction map 
$(V\otimes V)^\vee \to R_V^\vee=P_2(V)$ 
gives a continuous $P_2(V)$-valued $2$-cocycle, 
which we again denote by $\chi_V \cup \chi_V$, 
by slight abuse of notation. 
\end{notn}

\begin{lemma}\label{lemma:quadratic_lift_exists}
The embedding problem $G \xrightarrow{\chi_V}  V^\vee \twoheadleftarrow \UA_2(V)$ 
has a solution. 
Moreover, every solution $\rr_2$ is of the form $\rr_2 = 1 + \chi_V + b_2$
%is in bijection with the set of 
for some cochain $b_2 \in \Ch^1(G, P_2(V))$ 
such that $\delta b_2 =- \chi_V \cup \chi_V$. 
\end{lemma}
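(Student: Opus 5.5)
Any continuous map $\rho \colon G \to \UA_2(V) = 1 + P_1(V) \oplus P_2(V)$ can be written uniquely as $\rho = 1 + a_1 + b_2$, with continuous $a_1 \colon G \to P_1(V) = V^\vee$ and $b_2 \colon G \to P_2(V)$. This is because the topology on $\UA_2(V)$ is the product topology transported along $x \mapsto 1+x$. The proof has two parts. First, I will show that $\rho$ is a homomorphism lifting $\chi_V$ exactly when $a_1 = \chi_V$ and $\delta b_2 = -\chi_V \cup \chi_V$. Second, I will show that such a $b_2$ exists.

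\emph{Translating the homomorphism condition.} The lifting condition says the degree-one component is $\chi_V$, so $a_1 = \chi_V$. Next, I expand $\rho(g)\rho(h)$ using the product \eqref{eq:product_on_P}. In $\uA_2(V)$ the only nonzero product of positive-degree elements is $P_1 \times P_1 \to P_2$, and every product landing in degree $>2$ vanishes. This gives
\[
\rho(g)\rho(h) = 1 + \chi_V(g) + \chi_V(h) + b_2(g) + b_2(h) + \chi_V(g)\chi_V(h).
\]
By definition, $\chi_V(g)\chi_V(h) \in P_2(V) = R_V^\vee$ is the functional $x \mapsto (\chi_V(g)\otimes\chi_V(h))(\Delta_{1,1}(x))$. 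Since $\Delta_{1,1}$ is just the inclusion $R_V \into V \otimes V$, this functional sends $\sum v_i \otimes w_i$ to $\sum v_i(g) w_i(h)$. That is exactly $(\chi_V\cup\chi_V)(g,h)$ as in Notation \ref{notn:chiV_cup_chiV}. Since $\chi_V$ is a homomorphism, the equation $\rho(gh) = \rho(g)\rho(h)$ becomes
\[
b_2(gh) - b_2(g) - b_2(h) = (\chi_V\cup\chi_V)(g,h).
\]
With the trivial-module inhomogeneous differential $(\delta b)(g,h) = b(h) - b(gh) + b(g)$, this says precisely $\delta b_2 = -\chi_V\cup\chi_V$ in $\Ch^2(G,P_2(V))$. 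This step also shows that continuity of $\rho$ is equivalent to continuity of $b_2$, so the second assertion follows. I will need to check that this sign matches the convention $\delta \beta_2(x,y) = -x\cup y$ used earlier.

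\emph{Existence.} Using Lemma \ref{lemma:CP_identifications}, I identify $\Ch^q(G,P_2(V))$ with $\Hom_{\Fp}(R_V,\Ch^q(G,\Fp))$. Under this identification, $\chi_V\cup\chi_V$ corresponds to the linear map sending $\sum v_i\otimes w_i \in R_V$ to the cocycle $\sum v_i\cup w_i$. Its cohomology class is $\sum v_i \cup w_i = \mu_V(\sum v_i \otimes w_i)$, which vanishes because $R_V = \ker \mu_V$. So $\chi_V \cup \chi_V$ is a coboundary. Concretely, I choose a basis of $R_V$ and, for each basis element $x$, pick a $1$-cochain $\beta_2(x)$ with $\delta\beta_2(x) = -(\text{cocycle of }x)$. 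Extending linearly gives a map $\beta_2 \colon R_V\to\Ch^1(G,\Fp)$, and Lemma \ref{lemma:CP_identifications} turns it into a continuous $b_2 \in \Ch^1(G,P_2(V))$ with $\delta b_2 = -\chi_V\cup\chi_V$. For infinite-dimensional $V$ the only delicate point is continuity. Lemma \ref{lemma:CP_identifications} takes care of it, since continuity into $P_2(V)$ is checked coordinatewise, and each coordinate is a continuous cochain.

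The main obstacle is bookkeeping rather than substance. I must confirm that the multiplication on $\UA_2(V)$ yields $\chi_V\cup\chi_V$ with the stated ordering, not the transpose, and that the sign convention for $\delta$ agrees with the paper's.
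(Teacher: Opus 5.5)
Your proof is correct and follows the same route as the paper: you expand $(1+x)(1+y)=1+x+y+xy$ in $\UA_2(V)$, where only the product $P_1(V)\times P_1(V)\to P_2(V)$ survives, read off $\delta b_2=-\chi_V\cup\chi_V$, and obtain existence from $R_V=\ker\mu_V$. The sign and ordering checks you flag do go through with the standard differential $(\delta b)(g,h)=b(h)-b(gh)+b(g)$, and your coordinatewise continuity argument via Lemma \ref{lemma:CP_identifications} supplies a detail that the paper leaves implicit.
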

\begin{proof}
It follows from the definition of the group structure on $\UA_2(V)$ 
and a direct computation, 
using that the products $P_1(V) P_2(V)$,  $P_2(V) P_1(V) $, and $P_2(V) P_2(V)$ are trivial for $\UA_2(V)$, 
that every solution $\rr_2$ is of the form $\rr_2 = 1 + \chi_V + b_2$
%is in bijection with the set of 
for some cochain $b_2 \in \Ch^1(G, P_2(V))$ 
such that $\delta b_2 = - \chi_V \cup \chi_V$. 
To finish the proof, % of the first assertion of the lemma, 
it suffices to observe that such a cochain $b_2$ always exists 
since $\mu_V$ is trivial on $R_V$ by definition of the latter. 
\end{proof}

%%%%%%%%%%%%

%%%%%%%%%%

Now we consider the embedding problem 
\begin{align*}%\label{eq:embedding_problem_UUU}
\mathbf E^{(3)}_V \colon   G \xrightarrow{\chi_V}  V^\vee \twoheadleftarrow \UA_3(V).
\end{align*}
Let $b_2 \in \Ch^1(G,P_2(V))$ such that $\delta b_2 = -\chi_V \cup \chi_V$. 
%and let $\brr$ be the corresponding lift $G \to \bUU(V)$. 
%
We then define the $P_3(V)$-valued cocycle 
\begin{align}\label{eq:def_omega_b}
\omega(b_2) \coloneqq - \chi_V\cup b_2 - b_2 \cup \chi_V\in \Zh^2(G,P_3(V)). 
\end{align}
We note that $\omega(b_2)$ is indeed a cocycle 
since $\delta(\chi_V \cup b_2) = \chi_V \cup \chi_V \cup \chi_V$ 
and $\delta(b_2 \cup \chi_V) = - \chi_V \cup \chi_V \cup \chi_V$. 

\begin{lemma}\label{lemma:cubic_lift_cochain}
Let $\rr_2 = 1 + \chi_V + b_2 \colon G \to \UA_2(V)$ be a solution 
of the embedding problem $G \xrightarrow{\chi_V}  V^\vee \twoheadleftarrow \UA_2(V)$. 
%$b_V \in \Ch^1(G, P_V)$  such that $\delta b_V =- \chi_V \cup \chi_V$. 
%
The embedding problem $\mathbf E^{(3)}_V \colon G \xrightarrow{\chi_V}  V^\vee \twoheadleftarrow \UA_3(V)$ 
has a solution which extends $\rr_2$ 
if and only if there is a cochain $b_3 \in \Ch^1(G, P_3(V))$ such that $\delta b_3 = \omega(b_2)$. 
\end{lemma}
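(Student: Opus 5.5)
Write a candidate lift as $\rr_3 = 1+\chi_V+b_2+b_3$ with $b_3\colon G\to P_3(V)$ continuous, i.e.\ $b_3\in\Ch^1(G,P_3(V))$ via Lemma \ref{lemma:CP_identifications}. The proof is the same direct computation as in Lemma \ref{lemma:quadratic_lift_exists}, one degree higher.

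First, every solution of $\mathbf E^{(3)}_V$ extending $\rr_2$ has this form. The quotient map $\UA_3(V)\to\UA_2(V)$ kills $P_3(V)$ and is the identity on the degree $1,2$ components, so such a solution has degree-one component $\chi_V$ and degree-two component $b_2$. Its degree-three component $b_3$ is continuous because projection onto a summand of the finite product topology is continuous. Conversely, any continuous $b_3$ gives a continuous map $\rr_3\colon G\to\UA_3(V)$ lifting $\rr_2$. So the statement reduces to showing that $\rr_3$ is a homomorphism if and only if $\delta b_3=\omega(b_2)$.

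Next, expand $\rr_3(g)\rr_3(h)$ using the multiplication $(1+x)(1+y)=1+x+y+xy$, where products of total degree $>3$ vanish. The degree-one part gives $\chi_V(gh)=\chi_V(g)+\chi_V(h)$, which holds. The degree-two part is $b_2(gh)=b_2(g)+b_2(h)+\chi_V(g)\chi_V(h)$. The degree-three part is
\[
b_3(gh)=b_3(g)+b_3(h)+\chi_V(g)b_2(h)+b_2(g)\chi_V(h).
\]
One then has to identify the products in $\uA_3(V)$ with cup products of cochains valued in the $P_m(V)$. For $\lambda_1,\lambda_2\in P_1(V)$, the product $\lambda_1\lambda_2\in P_2(V)$ is $\lambda_1\otimes\lambda_2$ restricted to $R_V$. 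So $\chi_V(g)\chi_V(h)$ is exactly $(\chi_V\cup\chi_V)(g,h)$ in the sense of Notation \ref{notn:chiV_cup_chiV}. Likewise $P_1P_2\to P_3$ and $P_2P_1\to P_3$ are dual to $\Delta_{1,2}$ and $\Delta_{2,1}$. Under \eqref{eq:identification_CPnV}, $\chi_V(g)b_2(h)$ and $b_2(g)\chi_V(h)$ are therefore the values at $(g,h)$ of the cup products $\chi_V\cup b_2$ and $b_2\cup\chi_V$. These are the same cup products, valued in $P_3(V)$ via restriction to $\Krel_3(V)$, that appear in \eqref{eq:def_omega_b}.

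With the inhomogeneous differential $(\delta b)(g,h)=b(g)+b(h)-b(gh)$, the degree-two identity is $\delta b_2=-\chi_V\cup\chi_V$, which holds by assumption. The degree-three identity reads $\delta b_3=-\chi_V\cup b_2-b_2\cup\chi_V=\omega(b_2)$. Hence $\rr_3$ is a homomorphism if and only if $\delta b_3=\omega(b_2)$, which proves both directions.

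The only delicate step is the bookkeeping: the sign convention for $\delta$ has to match the one used in Lemma \ref{lemma:quadratic_lift_exists}. One must also check that the $\UA_3(V)$-product restricted along $\Delta_{1,2}$ and $\Delta_{2,1}$ agrees with the cup product into $P_3(V)$. This follows from the definition \eqref{eq:product_on_P} evaluated coordinatewise on $x\in\Krel_3(V)$.
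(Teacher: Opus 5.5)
Your proposal is correct and is exactly the direct computation the paper invokes. You write the lift as $1+\chi_V+b_2+b_3$, expand the product in $\UA_3(V)$ with all products of total degree $>3$ vanishing, and read off the degree-three component, which gives $\delta b_3=-\chi_V\cup b_2-b_2\cup\chi_V=\omega(b_2)$ under the same sign convention that yields $\delta b_2=-\chi_V\cup\chi_V$ in Lemma \ref{lemma:quadratic_lift_exists}; the paper states this without spelling out the details you supply.
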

\begin{proof}
This follows again from a direct computation, using that the higher products vanish in $\uA_3(V)$, 
and the definition of the group structure on $\UA_3(V)$. 
\end{proof}

%%%%%%%%%%%%%%

Let $b_2 \in \Ch^1(G,P_2(V))$ such that $\delta b_2 = -\chi_V \cup \chi_V$, 
and let $\omega(b_2)$ be as in \eqref{eq:def_omega_b}. 
Let $[\omega(b_2)]$ denote the corresponding cohomology class in $H^2(G,P_3(V))$. % \cong \Hom_{\F_p}(K_V,H^2)$. 
%where we recall the notation $H^2 = H^2(G,\F_p)$. 
%
By Lemma \ref{lemma:cubic_lift_cochain}, 
the class $[\omega(b_2)]$ in $H^2(G,P_3(V)) \cong \Hom_{\F_p}(\Krel_3(V),H^2)$  
is an obstruction to the existence of a lift to a solution of $\mathbf E^{(3)}_V$. %\eqref{eq:embedding_problem_UUU}. 
Now let $z \in \Zh^1(G,P_2(V))$ be a $P_2(V)$-valued cocycle. 
Let $[z] \in H^1(G,P_2(V))$ %= \Hom_{\F_p}(R_V, H^1)$ 
denote the corresponding cohomology class. 
Then 
$[\omega(b_2+z)]-[\omega(b_2)] = D_{3,V}[z]$,  
where $D_{3,V}([z])$ is defined as in \eqref{eq:DnV} %\eqref{eq:def_D_V,z_general} 
using Lemma \ref{lemma:CP_identifications}. 
Let 
$\ob_{3,V} \in \coker (D_{3,V})$ % \colon \Hom_{\F_p}(R_V,H^1) \to \Hom_{\F_p}(\Krel_3(V),H^2))$  
denote the image of $[\omega(b_2)]$ in the cokernel of $D_{3,V}$.   
Since a cochain $b_2 \in \Ch^1(G,P_2(V))$ such that $\delta b_2 = -\chi_V \cup \chi_V$ 
always exists by Lemma \ref{lemma:quadratic_lift_exists}, 
we have proven the following result. 

\begin{lemma}
\label{lemma:finite-simultaneous-lifting}
The embedding problem $\mathbf E^{(3)}_V$ %in \eqref{eq:embedding_problem_UUU} 
has a solution if and only if $\ob_{3,V}=0$. 
\end{lemma}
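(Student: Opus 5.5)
The plan is to reduce solvability of $\mathbf E^{(3)}_V$ to the existence of a suitable quadratic stage, and then read off the cubic obstruction. Every solution $\rr_3$ of $\mathbf E^{(3)}_V$ composes with the quotient $\UA_3(V)\twoheadrightarrow\UA_2(V)$ to give a solution $\rr_2$ of the quadratic problem. By Lemma \ref{lemma:quadratic_lift_exists}, $\rr_2 = 1+\chi_V+b_2$ with $\delta b_2 = -\chi_V\cup\chi_V$, and conversely every such $b_2$ gives a solution $\rr_2$. So $\mathbf E^{(3)}_V$ is solvable if and only if there exists a cochain $b_2$ with $\delta b_2=-\chi_V\cup\chi_V$ such that $\rr_2=1+\chi_V+b_2$ extends. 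By Lemma \ref{lemma:cubic_lift_cochain}, this extension exists exactly when $\omega(b_2)$ is a coboundary, i.e. when $[\omega(b_2)]=0$ in $H^2(G,P_3(V))\cong\Hom_{\Fp}(\Krel_3(V),H^2)$ (Lemma \ref{lemma:CP_identifications}).

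Next I would parametrise the choices of $b_2$. Fix one $b_2^0$, which exists by Lemma \ref{lemma:quadratic_lift_exists}. Any other solution differs from it by a cocycle $z\in\Zh^1(G,P_2(V))$. Every class in $H^1(G,P_2(V))\cong\Hom_{\Fp}(R_V,H^1)$ is represented by such a $z$, since $\Zh^1=H^1$ for trivial coefficients, and the identification is coordinatewise. Using the difference formula $[\omega(b_2^0+z)]-[\omega(b_2^0)]=D_{3,V}[z]$ stated just above, the set of achievable obstruction classes is exactly the coset $[\omega(b_2^0)]+\im(D_{3,V})$.

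Hence some $b_2$ has $[\omega(b_2)]=0$ if and only if $[\omega(b_2^0)]\in\im D_{3,V}$, which is the statement $\ob_{3,V}=0$. This also shows that $\ob_{3,V}$ is well defined, independent of $b_2^0$.

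The only step needing real care is the difference formula. Here $\omega(b_2+z)-\omega(b_2)=-\chi_V\cup z - z\cup\chi_V$, and one must check that under the identification of Lemma \ref{lemma:CP_identifications}, evaluated at $x_1\otimes x_2\otimes x_3\in\Krel_3(V)$, this cup product gives $-x_1\cup z(x_2,x_3)-z(x_1,x_2)\cup x_3 = D_{3,V}(z)$, with the sign $(-1)^3$. To do this I would use that the $P_1\cdot P_2$ and $P_2\cdot P_1$ products are dual to the deconcatenations $\Delta_{1,2}$ and $\Delta_{2,1}$, so the pairing $(\chi_V\cup z)(x)$ is computed by deconcatenating $x$ and pairing coordinatewise. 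Since the paper already records this formula, I would cite it and only sketch this check.
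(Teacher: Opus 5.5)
Your proposal is correct and follows the same argument as the paper. Both reduce solvability via Lemma \ref{lemma:cubic_lift_cochain} to finding some admissible $b_2$ with $[\omega(b_2)]=0$, and both then use the difference formula $[\omega(b_2+z)]-[\omega(b_2)]=D_{3,V}[z]$ together with the existence of $b_2$ from Lemma \ref{lemma:quadratic_lift_exists}. Your added remarks fill in points the paper leaves implicit: that $\Zh^1(G,P_2(V))=H^1(G,P_2(V))\cong\Hom_{\Fp}(R_V,H^1)$ for trivial coefficients, so every element of $\im D_{3,V}$ is actually realised, and the sign check against $D_{3,V}$ via deconcatenation.
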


%%%%%%%%%%%

\begin{proposition}
\label{prop:two-step-comparison_finite_embedding}
The embedding problem $\mathbf E^{(3)}_V$ % in \eqref{eq:embedding_problem_UUU} 
has a solution if and only if the restricted canonical $A_3$-class vanishes. 
\end{proposition}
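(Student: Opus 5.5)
The plan is to chain Lemma \ref{lemma:finite-simultaneous-lifting} with Lemma \ref{lemma:two-step-comparison} and Theorem \ref{thm:HHcomparison}. By Lemma \ref{lemma:finite-simultaneous-lifting}, $\mathbf E^{(3)}_V$ has a solution if and only if the class $\ob_{3,V}\in\coker(D_{3,V})$ coming from the embedding-problem side vanishes. That class is the image of $[\omega(b_2)]$, viewed in $\Hom_{\F_p}(\Krel_3(V),H^2)$ via Lemma \ref{lemma:CP_identifications}. By Lemma \ref{lemma:two-step-comparison}, the isomorphism $\iota_{3,V}$ sends the restricted canonical class $[m_{3,V}]$ to the class $\ob_{3,V}$ of Section \ref{sec:weak_vs_genuine_An_general}. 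That class is the image of $[\Omega(\beta_2)]$. So the proof reduces to showing that the two classes both denoted $\ob_{3,V}$ coincide.

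For this, I would translate $b_2$ into $\beta_2$ under the identification \eqref{eq:identification_CPnV}, $\Ch^1(G,P_2(V))\cong\Hom_{\F_p}(R_V,\Ch^1)$. A cochain $b_2$ then corresponds to the linear map $\beta_2(x\otimes y)\coloneqq b_2(\,\cdot\,)(x\otimes y)$. Evaluating $\chi_V\cup\chi_V$ at an element $\sum v_i\otimes w_i\in R_V$ gives $\sum v_i\cup w_i$. Hence the condition $\delta b_2=-\chi_V\cup\chi_V$ becomes exactly $\delta\beta_2(r)=-\mu$-lift of $r$, that is, $\delta\beta_2(x,y)=-x\cup y$ on $R_V$. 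Similarly, I would evaluate $\omega(b_2)=-\chi_V\cup b_2-b_2\cup\chi_V$ at $t\in\Krel_3(V)$. The product $P_1\times P_2\to P_3$ is dual to $\Delta_{1,2}$, so the coordinate at $t=\sum u\otimes r$ (with $r\in R_V$, by Lemma \ref{lem:deconcatenation}) is $-\sum u\cup\beta_2(r)$. The symmetric statement holds for the other term via $\Delta_{2,1}$. Thus $\omega(b_2)$ corresponds to $\Omega(\beta_2)$ on cochain level, and the classes agree in $\Hom(\Krel_3(V),H^2)$.

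Next I would check that the two definitions of $D_{3,V}$ agree. This is the same coordinate computation applied to a cocycle $z$, together with the identity $[\omega(b_2+z)]-[\omega(b_2)]=D_{3,V}[z]$ already noted. It follows that the cokernel classes coincide. Since $\iota_{3,V}$ is an isomorphism by Theorem \ref{thm:HHcomparison}, $[m_{3,V}]=0$ if and only if $\ob_{3,V}=0$, and the proposition follows.

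The main obstacle is bookkeeping in the coordinate identification: the cup product of $P_1$- and $P_2$-valued cochains, formed with the algebra product \eqref{eq:product_on_P}, must match the tensorwise cup products $u\cup\beta_2(v,w)$ with the correct signs and order of factors. I would verify this on pure tensors, using the explicit formula $(\lambda_1\lambda_2)(x)=(\lambda_1\otimes\lambda_2)(\Delta_{1,2}x)$, and then extend by linearity.
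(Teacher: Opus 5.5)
Your proposal is correct and follows the paper's proof: the paper chains the equivalence $[m_{3,V}]=0 \iff \ob_{3,V}=0$ (Lemma \ref{lemma:two-step-comparison}, invoked via Proposition \ref{prop:global-two-step}) with Lemma \ref{lemma:finite-simultaneous-lifting}. The only difference is that you explicitly check, through Lemma \ref{lemma:CP_identifications} and the deconcatenation maps, that $\omega(b_2)$ corresponds to $\Omega(\beta_2)$, so the two classes both written $\ob_{3,V}$ agree; the paper leaves this implicit by reusing the notation.
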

\begin{proof}
By Proposition \ref{prop:global-two-step}, % \ref{prop:two-step-comparison}, 
the restricted canonical $A_3$-class vanishes if and only if $\ob_{3,V} = 0$. 
Lemma \ref{lemma:finite-simultaneous-lifting} shows that 
$\ob_{3,V} = 0$ if and only if $\mathbf E^{(3)}_V$ has a solution. 
\end{proof}

%%%%%%%%%%%%%%%%%

For $n \ge 3$, we now consider the embedding problem 
\begin{equation*}%\label{eq:EPn}
\mathbf E^{(n)}_V \colon 
G\xrightarrow{\chi_V} V^\vee \twoheadleftarrow \UA_n(V). 
\end{equation*}

\begin{lemma}\label{lemma:nic_lift_cochain}
A solution $\rho \colon G \to \UA_n(V)$ of $\mathbf E^{(n)}_V$ % \eqref{eq:EPn} 
is of the form %has a unique expression
\[
\rho = 1+b_1+b_2+\cdots+b_n,
\]
where $b_1=\chi_V$, and $b_r \in \Ch^1(G,P_r(V))$ for $2\le r\le n$ %$m \ge 2$ 
such that 
\begin{equation}\label{eq:MC}
\delta b_r = -
\sum_{i = 1}^{r-1} b_i \cup b_{r-i} ~ \text{for} ~ 2\le r \le n,
\end{equation}
where the cup-product is followed by the multiplication
$P_i(V)\otimes P_j(V)\to P_r(V)$.

\end{lemma}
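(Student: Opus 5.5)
Since $\UA_n(V)=1+\uA_n(V)$ as a set and $\uA_n(V)=\bigoplus_{m=1}^n P_m(V)$, any continuous map $\rho\colon G\to\UA_n(V)$ decomposes uniquely as $\rho=1+b_1+\cdots+b_n$ with $b_r\colon G\to P_r(V)$ continuous. By Lemma \ref{lemma:CP_identifications}, each $b_r$ is an element of $\Ch^1(G,P_r(V))$. The plan is to translate the two defining properties of a solution, namely lying over $\chi_V$ and being a homomorphism, into conditions on the $b_r$. The first property is immediate: the projection $\UA_n(V)\twoheadrightarrow V^\vee$ of Remark \ref{rem:kernel_is_elementary_abelian} sends $1+x$ to the degree-one component of $x$. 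Hence $\rho$ lies over $\chi_V$ if and only if $b_1=\chi_V$.

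Next I will expand the homomorphism condition $\rho(gh)=\rho(g)\rho(h)$. Using the multiplication $(1+x)(1+y)=1+x+y+xy$ and the grading of the products \eqref{eq:product_on_P}, with everything of total degree $>n$ set to zero, I compare the degree-$r$ components on both sides. This gives, for each $1\le r\le n$,
\[
b_r(gh)=b_r(g)+b_r(h)+\sum_{i=1}^{r-1} b_i(g)\,b_{r-i}(h).
\]
For inhomogeneous $1$-cochains with trivial action, $(\delta b_r)(g,h)=b_r(g)-b_r(gh)+b_r(h)$. The cup product of $1$-cochains is $(b_i\cup b_j)(g,h)=b_i(g)b_j(h)$, followed by the multiplication $P_i(V)\otimes P_j(V)\to P_{i+j}(V)$. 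So the degree-$r$ identity is exactly $\delta b_r=-\sum_{i=1}^{r-1}b_i\cup b_{r-i}$. For $r=1$ the sum is empty and the identity says that $b_1$ is a homomorphism, which holds automatically for $b_1=\chi_V$. For $2\le r\le n$ it is \eqref{eq:MC}.

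Conversely, if $b_1=\chi_V$ and cochains $b_r$ satisfying \eqref{eq:MC} are given, the same componentwise computation read backwards shows that $\rho=1+\sum_r b_r$ is multiplicative. The map $\rho$ is continuous because each component is continuous and $\UA_n(V)$ carries the finite product topology. It lies over $\chi_V$ by the first step, so it is a solution. This converse is worth recording even though the lemma only states the forward direction, since it is presumably what the paper will use later.

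There is no real obstacle here. The only care needed is the sign and ordering convention for the cup product: in $b_i(g)b_{r-i}(h)$ the left factor is evaluated at the first argument, matching the order of the product in $\uA_n(V)$. One should also check consistency with Lemma \ref{lemma:quadratic_lift_exists} and with \eqref{eq:def_omega_b} for $r=2$ and $r=3$, where the formula should reduce to $\delta b_2=-\chi_V\cup\chi_V$ and $\delta b_3=\omega(b_2)$.
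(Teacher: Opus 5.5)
Your proposal is correct and is the same direct computation the paper gestures at. You decompose $\rho$ by degree, read off $b_1=\chi_V$ from the projection to $V^\vee$, and compare the degree-$r$ components of $\rho(gh)=\rho(g)\rho(h)$ using $(1+x)(1+y)=1+x+y+xy$ and the standard inhomogeneous coboundary, which yields exactly \eqref{eq:MC}. Your remark about the converse is apt, since the paper uses this lemma as an equivalence in the proof of (b)$\Leftrightarrow$(c) in Theorem \ref{thm:finite-lifting}.
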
 
\begin{proof}
This follows again from a direct computation, using that the higher products vanish in $\uA_n(V)$, 
and the definition of the group structure on $\UA_n(V)$. 
\end{proof}

%%%%

We now compare the recursive equations \eqref{eq:MC} with the ordinary obstruction theory 
for the vanishing of the minimal $A_\infty$-operations. 

\begin{notn}
Let $r \ge 3$. 
Let $\beta = (\beta_1, \ldots, \beta_{r-1})$ be a family of linear maps with
$\beta_1\colon V \to \Zh^1(G,\F_p)$ the restriction of the canonical identification $H^1 \xto{=} \Zh^1$, 
and, for $2\leq i<r$,
let $\beta_i \colon K_i(V) \to \Ch^1(G,\F_p)$.  
We then define the linear map 
\begin{align*}%\label{eq:notation_S}
S_j (\beta) \coloneqq \sum_{i=1}^{j-1} (\beta_i \cup \beta_{j-i}) \circ \Delta_{i,j-i} 
\colon K_j(V) \to \Ch^2(G,\Fp) ~ \text{for} ~ 2 \le j \le r.
\end{align*}
Here $S_j(\beta)$ depends only on $\beta_1, \dots, \beta_{j-1}$, 
so the same notation is used when $\beta$ is a longer family. 
We say that the family $\beta$ is {\em $V_r$-good} if the maps $\beta_i$ satisfy 
$\delta\beta_i = -S_i(\beta)$ for $2 \le i <r$. 
In this case, a direct computation gives $\delta S_r(\beta) = 0$, 
and we then write 
$\Omega_r(\beta) \coloneqq \varepsilon_rS_r(\beta) \colon K_r(V) \to \Zh^2(G,\F_p)$, 
where we set  $\varepsilon_j \coloneqq (-1)^{\lfloor (j-1)/2 \rfloor}$ so that $\varepsilon_1 = \varepsilon_2 = 1$. 
\end{notn}

%%%%%%%%%%%%

\begin{remark}
The choice of the sign $\varepsilon_r$ stems from the sign conventions for $A_{\infty}$-morphisms in \cite[Section 3.4]{Keller}. 
The Koszul sign rule turns the sign for $f_i \cup f_j$ on degree one classes to be $(-1)^{ij+1}$, 
%The sign 
and $\varepsilon_r$ is chosen such that $\varepsilon_{i+j} = (-1)^{ij+1}\varepsilon_i \varepsilon_j$. 

\end{remark}

%%%%%%%%%%%%

\begin{lemma}
\label{lem:truncated-twisting-cochain}
Under the natural identification \eqref{eq:identification_CPnV}, 
there is a bijection between the set of cochains $b_r \in \Ch^1(G,P_r(V))$ for $1\le r \le n$ 
with $b_1 = \chi_V$ and satisfying  \eqref{eq:MC} 
and the set of $V_{n+1}$-good families $\beta = (\beta_1, \ldots, \beta_{n})$. 
\end{lemma}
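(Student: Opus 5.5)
The bijection will be the identification \eqref{eq:identification_CPnV} of Lemma \ref{lemma:CP_identifications}, applied degree by degree, possibly twisted by the signs $\varepsilon_r$. A cochain $b_r\in\Ch^1(G,P_r(V))$ corresponds to a linear map $\tilde b_r\colon K_r(V)\to\Ch^1(G,\F_p)$, given by $\tilde b_r(x)(g)=b_r(g)(x)$. For $r=1$ the tautological character $\chi_V$ corresponds to the canonical map $V\to\Zh^1$, since $\chi_V(g)(v)=v(g)$; this is $\beta_1$. I will set $\beta_r\coloneqq\varepsilon_r\tilde b_r$, or $\beta_r=\tilde b_r$ if the sign bookkeeping shows no twist is needed. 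The map $(b_r)_r\mapsto(\beta_r)_r$ is then clearly bijective between all families of cochains with $b_1=\chi_V$ and all families $(\beta_1,\dots,\beta_n)$ with $\beta_1$ canonical. It remains to show that \eqref{eq:MC} for $2\le r\le n$ corresponds exactly to the goodness conditions $\delta\beta_r=-S_r(\beta)$ for $2\le r<n+1$.

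The key computation is translating a product cup $b_i\cup b_{r-i}$, taken with values in $P_r(V)$ through \eqref{eq:product_on_P}, across the identification. Evaluated at $x\in K_r(V)$ with $\Delta_{i,r-i}(x)=\sum_s a_s\otimes c_s$, the definition of the product gives
\[
(b_i\cup b_{r-i})(g_1,g_2)(x)=\sum_s b_i(g_1)(a_s)\,b_{r-i}(g_2)(c_s)=\bigl((\tilde b_i\cup\tilde b_{r-i})\circ\Delta_{i,r-i}\bigr)(x)(g_1,g_2).
\]
The coboundary $\delta$ commutes with the identification coordinatewise, because the action is trivial and the identification is pointwise evaluation. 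So \eqref{eq:MC} becomes $\delta\tilde b_r=-\sum_{i=1}^{r-1}(\tilde b_i\cup\tilde b_{r-i})\circ\Delta_{i,r-i}=-S_r(\tilde b)$. The resulting bijection preserves $\beta_1$ and takes the condition in degree $r$ to $V_{r+1}$-goodness in that degree, so induction on $r$ gives the claim for the whole family up to $r=n$.

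The main obstacle is the signs $\varepsilon_r$. If $\beta_r=\tilde b_r$, the correspondence is literally an identity and there is nothing to check. If the authors intend $\beta_r=\varepsilon_r\tilde b_r$, for compatibility with $A_\infty$-morphism conventions, then $\delta\beta_r=-\varepsilon_r S_r(\tilde b)$. One then needs $\varepsilon_i\varepsilon_{r-i}$ to be the same for all $i$ so that it factors out of $S_r$; this is guaranteed by $\varepsilon_{i+j}=(-1)^{ij+1}\varepsilon_i\varepsilon_j$ only up to the factor $(-1)^{ij+1}$, which varies with $i$. I therefore expect the intended convention to be the plain identification $\beta_r=\tilde b_r$, with the signs $\varepsilon_r$ entering only later, in $\Omega_r$. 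I would write the proof that way and check that the cup product on inhomogeneous cochains $G^1\times G^1$ introduces no extra sign in degree $(1,1)$, which is immediate from the definition in Notation \ref{notn:chiV_cup_chiV}.
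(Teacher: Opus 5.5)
Your proposal is correct and is the paper's argument: the paper's proof is the single sentence that the lemma follows by dualising the maps involved, and your coordinatewise use of Lemma \ref{lemma:CP_identifications} spells this out. Under that identification $\chi_V$ goes to the canonical $\beta_1$, and the $P_r(V)$-valued cup products go to $(\beta_i\cup\beta_{r-i})\circ\Delta_{i,r-i}$, so \eqref{eq:MC} becomes $\delta\beta_r=-S_r(\beta)$. You also settle the sign question correctly: the bijection is the untwisted $\beta_r=\tilde b_r$, and the signs $\varepsilon_r$ enter only later, when the paper relates $\beta_r$ to the $A_\infty$-components via $f_r|_{K_r(V)}=\varepsilon_r\beta_r$.
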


\begin{proof}
This follows from dualising the maps involved. 
\end{proof}

%%%%%%%%%%%%%%%%%%

\begin{definition} 
Let $\beta$ be an $V_r$-good family. 
We say that $\Hb$ is {\em $V_r^{\beta}$-nice} if 
there is a minimal $A_{\infty}$-structure on $\Hb$ 
together with a strictly unital $A_\infty$-quasi-isomorphism 
$F=(f_1,f_2,\ldots) \colon \Hb \to \Cb$  
such that
%\begin{equation}\label{eq:lower-operations-vanish}
$m_3|_{A_V^{\otimes3}} = \cdots =m_{r-1}|_{A_V^{\otimes(r-1)}} = 0$,   
%\end{equation}
where $f_1$ induces the identity after taking cohomology and 
$f_1|_{H^1} \colon H^1 \xto{=} \Zh^1$ is the canonical identification, 
and $f_i|_{K_i(V)} = \varepsilon_i \beta_i$ for $1\leq i < r$. 
\end{definition}

%%%%%%%%%%%%%%%%%%%%

\begin{proposition}
\label{prop:defining-system-comparison}
Let $r\ge 3$, 
and let $\beta = (\beta_1, \dots, \beta_{r-1})$ be a $V_r$-good family. 
Then $\Hb$ is $V_r^{\beta}$-nice 
with $m_r|_{K_r(V)} = [\Omega_r(\beta)]$ in 
$\Hom_{\F_p}(K_r(V),H^2)$. 
With $\beta_1,\dots,\beta_{r-2}$ fixed, 
the image of $[\Omega_r(\beta)]$ in $\coker(D_{r,V})$ is
independent of the choice of $\beta_{r-1}$.   
\end{proposition}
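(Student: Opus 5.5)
I will argue by induction on $r\ge3$. The idea is to build the strictly unital $A_\infty$-quasi-isomorphism $F=(f_1,f_2,\dots)$ by the standard Kadeishvili/Merkulov recursion, and to use the freedom in each step to prescribe $f_i|_{K_i(V)}=\varepsilon_i\beta_i$.

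\emph{Setting up the recursion.} Choose $f_1\colon \Hb\to\Zb$ inducing the identity in cohomology, with $f_1|_{H^1}$ the canonical identification. Then $f_1|_{K_1(V)}=\beta_1=\varepsilon_1\beta_1$. Suppose that $f_1,\dots,f_{k-1}$ and $m_2,\dots,m_{k-1}$ have been chosen, with the $A_\infty$-morphism equations holding up to arity $k-1$. The arity-$k$ equation reads
\[
\delta f_k = \Phi_k + f_1\circ m_k ,
\]
where $\Phi_k$ collects the terms $\pm f_i\cup f_j$ with $i+j=k$ and $\pm f_{k-j+1}(\One^{\otimes a}\otimes m_j\otimes\One^{\otimes b})$ with $2\le j<k$. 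One knows that $\Phi_k$ is a cocycle. The recursion sets $m_k:=-[\Phi_k]$, and $f_k$ may then be any chosen primitive.

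\emph{Restricting to $K_k(V)$.} Assume inductively that $m_j|_{A_V^{\otimes j}}=0$ for $3\le j<k$. On $K_k(V)\subset V^{\otimes k}$ the $m_2$-terms vanish, because $m_2$ applied to two adjacent factors kills $R_V$. The $m_j$-terms with $3\le j<k$ vanish by this inductive assumption. So only the cup terms survive. By Lemma \ref{lem:deconcatenation}, each $f_i\cup f_j$ restricted to $K_k(V)$ factors through $\Delta_{i,j}$ into $K_i(V)\otimes K_j(V)$. There we have $f_i=\varepsilon_i\beta_i$ and $f_j=\varepsilon_j\beta_j$, and the degree-one Koszul sign is $(-1)^{ij+1}$.

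The identity $\varepsilon_{i+j}=(-1)^{ij+1}\varepsilon_i\varepsilon_j$ then gives
\[
\Phi_k|_{K_k(V)}=\varepsilon_k S_k(\beta),
\]
up to one overall sign that I will fix once from the conventions of \cite[Section 3.4]{Keller}.

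\emph{Inductive step, $k<r$.} Here $\delta\beta_k=-S_k(\beta)$, so $\varepsilon_kS_k(\beta)$ is a coboundary and $[\Phi_k]|_{K_k(V)}=0$. This is not yet enough: I need $m_k|_{A_V^{\otimes k}}=0$ on all of $V^{\otimes k}$, not only on $K_k(V)$. I will use Theorem \ref{thm:HHcomparison} for this. Its injectivity shows that any map $V^{\otimes k}\to H^2$ vanishing on $K_k(V)$ is a Hochschild coboundary $\partial h$ of a cochain supported on the components $V^{\otimes j}\otimes B_V\otimes V^{\otimes(k-2-j)}$. Extend $h$ by zero and modify $f_{k-1}$ by $f_1\circ h$, exactly as in the proof of Lemma \ref{lemma:A_3_restricted_class}. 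This makes $m_k|_{A_V^{\otimes k}}=0$. Because $h$ lives on $B_V$-components, it does not change $f_{k-1}|_{K_{k-1}(V)}$. It also does not change $m_j$ for $j<k$, since $h$ only alters the arity-$k$ equation.

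Now pick $f_k$ with $f_k|_{K_k(V)}=\varepsilon_k\beta_k$. This is possible because $\varepsilon_k\beta_k$ solves the restricted equation. Extend it to all of $H^{\otimes k}$ by any primitive: $K_k(V)$ is a direct summand, and over a field one can patch a solution on a complement.

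\emph{Final step, $k=r$.} The same restriction computation gives $m_r|_{K_r(V)}=[\varepsilon_rS_r(\beta)]=[\Omega_r(\beta)]$, again up to the fixed normalising sign, which must be checked to match the definition of $\Omega_r$.

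\emph{Independence of $\beta_{r-1}$.} Any two admissible choices differ by $\gamma=\beta_{r-1}'-\beta_{r-1}$ with $\delta\gamma=0$, since $S_{r-1}$ depends only on $\beta_1,\dots,\beta_{r-2}$. In $S_r$, the choice of $\beta_{r-1}$ enters only through the terms $\beta_1\cup\beta_{r-1}$ and $\beta_{r-1}\cup\beta_1$. Hence
\[
[\Omega_r(\beta')]-[\Omega_r(\beta)]=\pm\varepsilon_r\bigl(x_1\cup[\gamma](x_2,\dots)+[\gamma](\dots,x_{r-1})\cup x_r\bigr),
\]
and after sign bookkeeping this equals $D_{r,V}([\gamma])$, which vanishes in $\coker(D_{r,V})$.

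\emph{Main obstacle.} I expect the hardest part to be the sign bookkeeping. One must show that the Koszul signs of the $A_\infty$-morphism equation, the factors $\varepsilon_k$, and the signs $(-1)^r$ in $D_{r,V}$ match exactly. One must also check that the Hochschild modification which kills $m_k$ off $K_k(V)$ leaves the prescribed restrictions intact.
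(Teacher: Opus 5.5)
Your proposal is correct and is essentially the paper's argument. You restrict the arity-$k$ morphism identity to $K_k(V)$, kill $m_k$ off $K_k(V)$ using the $B_V$-supported Hochschild coboundaries from the proof of Theorem \ref{thm:HHcomparison} (which leave $f_{k-1}|_{V^{\otimes(k-1)}}$ untouched), adjust $f_k$ by a cocycle-valued map to realise $\varepsilon_k\beta_k$, and compare two choices of $\beta_{r-1}$ via $D_{r,V}$. The only difference is organisational: you build $F$ in one Kadeishvili recursion with constrained choices of primitives, whereas the paper inducts on $r$ and modifies an existing transferred structure. The overall sign you leave open is forced to be $\Phi_k|_{K_k(V)}=-\varepsilon_kS_k(\beta)$; this is exactly what makes $\varepsilon_k\beta_k$ an admissible primitive and yields $m_r|_{K_r(V)}=[\Omega_r(\beta)]$.
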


\begin{proof}
We prove the first assertion by induction on $r$. 
The assertion for $r=3$ is the concrete
description of the restricted canonical $A_3$-class. 
That is, we choose an initial transferred minimal structure and its associated strictly unital $A_{\infty}$-quasi-isomorphism 
$(f_1,\widetilde{f}_2, \widetilde{f}_3, \ldots)$. 
Since both $\widetilde{f}_2|_{R_V}$ and $\beta_2$ have differential   $-\beta_1 \cup \beta_1$, 
their difference is cocycle-valued. 
Then we extend this difference to a normalised cocycle-valued cochain, 
and adjust the initial $\widetilde{f}_2$ to get a linear map $f_2 \colon (\Hb)^{\otimes 2} \to \Cb$ of degree $-1$ 
such that $f_2|_{R_V} = \beta_2$. 
Now we assume the assertion has been proved % through order $r-1$. 
for $r-1$. 
Thus, after replacing the transferred minimal structure by a strictly unital
$A_\infty$-isomorphism whose linear component is the identity, we may
assume that
$m_3|_{A_V^{\otimes3}} = \cdots =m_{r-2}|_{A_V^{\otimes(r-2)}} = 0$, 
and that the components of the associated $A_\infty$-quasi-isomorphism
satisfy
$f_i|_{K_i(V)} = \varepsilon_i  \beta_i$ for $1 \le i < r-1$.   
%
%%%%%
%
We first explain how to realise the prescribed component in arity $r-1$. 
By the induction hypothesis, 
we have $m_{r-1}|_{K_{r-1}(V)} = [\Omega_{r-1}(\beta)]$ in $\Hom_{\F_p}(K_{r-1}(V),H^2)$, 
and 
$\delta\beta_{r-1} = -S_{r-1}(\beta)$.
Hence we have 
$\Omega_{r-1}(\beta) = \varepsilon_{r-1}S_{r-1}(\beta)  = - \varepsilon_{r-1}\delta \beta_{r-1}$. 
This shows that $\Omega_{r-1}(\beta)$ is exact, 
and hence $m_{r-1}|_{K_{r-1}(V)} = 0$ as a map to $H^2$. 
We can now use the support argument from the proof of Theorem \ref{thm:HHcomparison}:   
a Hochschild cochain vanishing on $K_{r-1}(V)$ is a Hochschild coboundary of cochains 
supported on components with one $B_V$-factor. 
Such cochains vanish on $V^{\otimes(r-2)}$, so the already prescribed restrictions of lower morphism components are preserved.
Thus, we can assume $m_{r-1}|_{A_V^{\otimes(r-1)}} = 0$ as well. 
%
%%%%%

Now we use the $(r-1)$-st identity of an $A_\infty$-morphism. 
By the induction hypothesis for the lower components, both
$f_{r-1}|_{K_{r-1}(V)}$ 
and $\varepsilon_{r-1} \beta_{r-1}$  
have differential 
$- \varepsilon_{r-1}S_{r-1}(\beta)$.  
We then get 
$\delta(f_{r-1}|_{K_{r-1}(V)}) = \varepsilon_{r-1}\delta\beta_{r-1}$. 
Their difference
$z_{r-1} \coloneqq \varepsilon_{r-1}\beta_{r-1}-f_{r-1}|_{K_{r-1}(V)}$  
therefore takes values in $\Zh^1$.
We choose a normalised cochain $\widetilde{z}_{r-1}\colon (\Hb)^{\otimes(r-1)} \to \Cb$ 
of degree $2-r$ whose restriction to $K_{r-1}(V)$ is $z_{r-1}$, 
whose values lie in $\Zb$, 
and which vanishes whenever one input is the unit.
Replacing $f_{r-1}$ by
$f_{r-1}+\widetilde{z}_{r-1}$
does not change the already established $A_\infty$-identities through order
$r-1$, since $\delta\widetilde{z}_{r-1}=0$. 
However, 
it changes the $r$-th source
operation by the Hochschild coboundary determined by
$\widetilde{z}_{r-1}$. 
Hence the lower operations remain zero on $A_V$, while the new component satisfies
$f_{r-1}|_{K_{r-1}(V)} = \varepsilon_{r-1} \beta_{r-1}$.  
%
%%%%%%%%%%%

Now we compute the obstruction in arity $r$. 
Consider the $r$-th
$A_\infty$-morphism identity for
$F \colon \Hb \to \Cb$. 
After restriction to $K_r(V)$, terms containing an insertion of $m_2$
vanish because every adjacent pair in $K_r(V)$ belongs to
$R_V$. %=\ker(V\otimes V \to H^2)$.  
Terms containing one of $m_3,\ldots,m_{r-1}$ vanish 
since $m_3|_{A_V^{\otimes3}} = \cdots =m_{r-1}|_{A_V^{\otimes(r-1)}} = 0$. 
%by \eqref{eq:lower-operations-vanish}. 
%
Hence the only source operation which remains
is $m_r|_{K_r(V)}$.   
The terms on the target side which remain are the products
$(f_i \cup f_{r-i})\circ\Delta_{i,r-i}$ for $1 \le i \le r-1$.   
Using $f_i|_{K_i(V)} = \varepsilon_i \beta_i$ %\eqref{eq:realise-beta} 
and 
$(-1)^{ij+1} \varepsilon_i \varepsilon_j = \varepsilon_{i+j}$, 
the restricted identity therefore gives, 
\begin{equation*}%\label{eq:(E)} %\label{eq:restricted-morphism-identity}
 f_1\bigl(m_r|_{K_r(V)}\bigr)  = \delta\bigl(f_r|_{K_r(V)}\bigr) +  \Omega_r(\beta).  %\Omega_r(\beta_1,\ldots,\beta_{r-1}).
\end{equation*}
Applying $\delta$ to both sides and using $\delta^2=0$ shows that
$\delta\Omega_r(\beta)=0$. 
Thus, $\Omega_r$ takes values
in $\Zh^2$. %, proving \textup{(ii)}.
After passing to cohomology we get 
\begin{equation*}%\label{eq:restricted-morphism-identity}
m_r|_{K_r(V)} 
=  [\Omega_r(\beta)] ~ \text{in} ~ \Hom_{\F_p}(K_r(V),H^2).
\end{equation*}
%
%%%%%%%%%%%%

Now suppose that $\beta' = (\beta'_1, \dots, \beta'_{r-1})$ is another $V_r$-good family, 
and assume that
$\beta_i' = \beta_i$ for $1 \leq i \le r-2$.  %
We write $z \coloneqq \beta_{r-1}'-\beta_{r-1}$. 
The defining-system equation in arity $r-1$ has the same right-hand side for
$\beta_{r-1}'$ and $\beta_{r-1}$. 
Subtracting the two equations therefore gives
$\delta z=0$. 
Hence $z$ takes values in $\Zh^1$.
Replacing the $\beta_i$ by the $\beta'_i$ changes 
only the two summands of $\Omega_r$ involving $\beta_{r-1}$. 
Thus, we get 
%\begin{align*}
$\Omega'_r(\beta') -\Omega_r (\beta) = \varepsilon_r \left( (\beta_1\cup z)\circ\Delta_{1,r-1} + (z\cup \beta_1)\circ\Delta_{r-1,1} \right)$.   
%\end{align*}
%
Since 
$D_{r,V}[z] =(-1)^r \left( (\beta_1\cup  z)\circ \Delta_{1,r-1} + (z\cup \beta_1)\circ\Delta_{r-1,1} \right)$, 
after passing to cohomology, we get $[\Omega'_r(\beta') -\Omega_r (\beta)] = (-1)^{r} \varepsilon_r D_{r,V}([z])$. 
This finishes the proof. 
\end{proof}

%%%%%%%%%%%%%%%%%

Now we can prove a key technical result. 

\begin{theorem}
\label{thm:finite-lifting}
Let $G$ be a profinite group such that $H^i(G,\F_p) = 0$ for $i \ge 3$, 
and we let $V \subset H^1(G,\F_p)$ be a vector subspace. 
Let $n\ge3$. 
Then the following are equivalent:
\begin{enumerate}
\item[\textnormal{(a)}] the minimal $A_\infty$-structure on $\Hb$ can be chosen 
such that \\ 
$m_3|_{A_V^{\otimes 3}} = \dots = m_n|_{A_V^{\otimes n}} = 0$; 
\item[\textnormal{(b)}]  there exist cochains $b_1, b_2,\dots,b_n$ satisfying \eqref{eq:MC} with $b_1 = \chi_V$; 
\item[\textnormal{(c)}]  the embedding problem
\[
\mathbf E^{(n)}_V \colon G\xrightarrow{\chi_V} V^\vee \twoheadleftarrow\UA_n(V)
\]
has a solution.
\end{enumerate}
\end{theorem}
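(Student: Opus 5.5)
The plan is to prove (b)$\Leftrightarrow$(c) directly and then (a)$\Leftrightarrow$(b) by induction on $n$, using Proposition~\ref{prop:defining-system-comparison} for each step.

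\textbf{(b)$\Leftrightarrow$(c).} This equivalence is the content of Lemma~\ref{lemma:nic_lift_cochain}. A solution $\rho$ of $\mathbf E^{(n)}_V$ decomposes as $1+b_1+\dots+b_n$ with $b_1=\chi_V$ and the $b_r$ satisfying \eqref{eq:MC}. Conversely, given such cochains, the same direct computation in $\UA_n(V)$ shows that $\rho=1+\sum b_r$ is a continuous homomorphism. The key point is that the homomorphism condition, read degree by degree in $\uA_n(V)$, is exactly \eqref{eq:MC}. This $\rho$ lifts $\chi_V$.

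\textbf{(b)$\Rightarrow$(a).} By Lemma~\ref{lem:truncated-twisting-cochain}, cochains as in (b) correspond to a $V_{n+1}$-good family $\beta=(\beta_1,\dots,\beta_n)$. I argue by induction on $r$, for $3\le r\le n$, that $\Hb$ is $V_{r+1}^\beta$-nice. Apply Proposition~\ref{prop:defining-system-comparison} to the truncated family $(\beta_1,\dots,\beta_{r-1})$, which is $V_r$-good. It yields a minimal structure and an $A_\infty$-quasi-isomorphism with $m_3,\dots,m_{r-1}$ vanishing on $A_V$ and $m_r|_{K_r(V)}=[\Omega_r(\beta)]$. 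Since $\beta_r$ exists with $\delta\beta_r=-S_r(\beta)$, the cocycle $\Omega_r(\beta)=\varepsilon_rS_r(\beta)$ is a coboundary, so $m_r|_{K_r(V)}=0$.

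The support argument of Theorem~\ref{thm:HHcomparison} then applies. A cochain $V^{\otimes r}\to H^2$ vanishing on $K_r(V)$ is a Hochschild coboundary of cochains supported on components containing one $B_V$-factor. Modifying $f_{r-1}$ by such a cochain kills $m_r$ on $A_V^{\otimes r}$ (which is just $V^{\otimes r}$ in the relevant degree). It does not change $f_i|_{K_i(V)}$ for $i\le r-1$, nor the lower vanishing. Then I adjust $f_r$ on $K_r(V)$ by a cocycle-valued correction so that it equals $\varepsilon_r\beta_r$, exactly as in the proof of the proposition. Taking $r=n$ gives (a).

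\textbf{(a)$\Rightarrow$(b).} Take $F=(f_1,f_2,\dots)$ realising (a), with $f_1|_{H^1}$ the canonical identification (possible after an $A_\infty$-isomorphism with identity linear part). Set $\beta_i\coloneqq\varepsilon_i f_i|_{K_i(V)}$ for $i\le n$. Restrict the $r$-th $A_\infty$-morphism identity to $K_r(V)$ for $2\le r\le n$. The $m_2$-insertions die because adjacent pairs lie in $R_V$, and the terms with $m_3,\dots,m_r$ die by (a). What remains is $0=\delta(f_r|_{K_r})+\varepsilon_rS_r(\beta)$ (up to the sign convention $\varepsilon_{i+j}=(-1)^{ij+1}\varepsilon_i\varepsilon_j$), that is, $\delta\beta_r=-S_r(\beta)$. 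So $\beta$ is $V_{n+1}$-good, and Lemma~\ref{lem:truncated-twisting-cochain} gives (b).

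\textbf{Main obstacle.} The delicate step is the sign bookkeeping in this last identity. One must check that the restricted morphism identity is literally the displayed equation from the proof of Proposition~\ref{prop:defining-system-comparison}, now with $m_r|_{K_r(V)}=0$. I would simply reuse that computation rather than redo it.
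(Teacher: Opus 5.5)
Your proposal is correct and follows the paper's proof: you get (b)$\Leftrightarrow$(c) from Lemma \ref{lemma:nic_lift_cochain}, and (a)$\Rightarrow$(b) by restricting the $r$-th $A_\infty$-morphism identity to $K_r(V)$, exactly as the paper does. For (b)$\Rightarrow$(a) the paper applies Proposition \ref{prop:defining-system-comparison} once, with $r=n+1$, to the $V_{n+1}$-good family $(\beta_1,\dots,\beta_n)$; your level-by-level argument re-runs the inductive step already inside that proposition's proof, so it is redundant but not wrong.
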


\begin{proof}
The equivalence of (b) and (c) is given by Lemma \ref{lemma:nic_lift_cochain}. 
It remains to prove the equivalence of
(a) and (b).  
Assume first that (a) holds. 
We choose a strictly unital minimal
$A_\infty$-structure
$(H^\bullet,m_2,m_3,\ldots)$ 
and a strictly unital $A_\infty$-quasi-isomorphism
$F=(f_1,f_2,f_3,\ldots) \colon \Hb \to \Cb$ 
where $f_1|_{H^1}$ is the identity $H^1 \xto{=} \Zh^1 = \Hom_{\cont}(G,\Fp)$, 
and each $f_i$ is of degree $1-i$, 
such that 
$m_3|_{A_V^{\otimes3}}=\cdots=m_n|_{A_V^{\otimes n}} = 0$. 
For $1\leq r\leq n$, we set 
$\beta_r \coloneqq \varepsilon_r f_r|_{K_r(V)} \colon K_r(V) \to \Ch^1$.  
In particular, $\beta_1 = \id_V$. 
Since $\varepsilon_r^2=1$, the definition is equivalent to $f_r|_{K_r(V)} = \varepsilon_r \beta_r$. 
Now we restrict the $r$-th $A_\infty$-morphism identity to $K_r(V)$. 
Every source term containing an insertion of $m_2$ vanishes since neighbouring pairs in
$K_r(V)$ lie in $R_V$. 
Every source term involving one of $m_3, \ldots,m_r$ 
vanishes by the assumption in (a). % \eqref{eq:vanishing-through-n}. %the assumed triviality of the operations on $A_V$. 
The remaining target multiplication terms combine to $\Omega_r(\beta)$. 
Hence 
$\delta\bigl(f_r|_{K_r(V)}\bigr)  = - \Omega_r(\beta) = - \varepsilon_r S_r(\beta)$, 
and thus $\delta\beta_r = - S_r(\beta)$ for $2 \le r \le n$. 
Thus, $\beta = (\beta_1,\ldots, \beta_n)$ is a $V_{n+1}$-good family. 
By Lemma \ref{lem:truncated-twisting-cochain}, 
this implies (b). 

%%%%%%%%%%%

Now we assume that (b) holds. 
By Lemma \ref{lem:truncated-twisting-cochain}, 
the cochains $b_1,\dots,b_n$ determine a $V_{n+1}$-good family $\beta = (\beta_1,\ldots, \beta_n)$. 
By Proposition \ref{prop:defining-system-comparison} applied with $r = n+1$, 
$\Hb$ is $V_{n+1}^{\beta}$-nice, which proves (a). 
Hence (b) implies (a). 
\end{proof}

%%%%%%%%%%%%%%%%%
%
%%%%%%%%%%%%%%%%%

\begin{corollary}\label{cor:weak-embedding} 
Let $G$ be a profinite group such that $H^i(G,\F_p) = 0$ for $i \ge 3$, 
and let $n \ge 3$. 
Then $\Cb(G,\Fp)$ is weakly $A_n$-formal 
if and only if every finite embedding problem $\mathbf E^{(n)}_V$ is solvable. 
Moreover, $\Cb(G,\Fp)$ is $A_n$-formal 
if and only if the embedding problem $\mathbf E^{(n)}_{H^1}$ for $V = H^1$ is solvable. 
\end{corollary}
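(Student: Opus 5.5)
The corollary is obtained by feeding Theorem \ref{thm:finite-lifting} into Definitions \ref{def:weak-An} and \ref{def:An}. The only point needing care is that condition (a) of Theorem \ref{thm:finite-lifting} asks for vanishing of $m_k$ on $A_V^{\otimes k}$, not only on $V^{\otimes k}$.

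\emph{Weak case.} For a finite-dimensional $V\subset H^1$, Lemma \ref{lemma:nilpotent-algebra} says $\UA_n(V)$ is a finite $p$-group, and $V^\vee$ is finite, so $\mathbf E^{(n)}_V$ is a finite embedding problem. By Theorem \ref{thm:finite-lifting}, (c)$\Leftrightarrow$(a), solvability of $\mathbf E^{(n)}_V$ is equivalent to the existence of a minimal structure with $m_k|_{A_V^{\otimes k}}=0$ for $3\le k\le n$. I will check that this matches Definition \ref{def:weak-An}, which requires $m_k|_{V^{\otimes k}}=0$.
\begin{itemize}
\item Fix $k\ge3$. The operation $m_k$ has degree $2-k$, and $H^i=0$ for $i\ge3$.
\item So the only possibly nonzero component of $m_k$ is $(H^1)^{\otimes k}\to H^2$.
\item Strict unitality makes $m_k$ vanish when an input is $1$.
\item An input from $B_V\subset H^2$ raises the total input degree above $k$, so the output would lie in degree $\ge 3$, which is zero.
\end{itemize}
Hence $m_k|_{A_V^{\otimes k}}=0$ is the same as $m_k|_{V^{\otimes k}}=0$. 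Running over all finite-dimensional $V$ gives the first assertion.

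\emph{Genuine case.} Apply Theorem \ref{thm:finite-lifting} with $V=H^1$; it allows arbitrary subspaces, and then $A_{H^1}$ is the subalgebra generated by $H^1$. Here $\mathbf E^{(n)}_{H^1}$ is a profinite embedding problem, and the theorem applies to it directly since it makes no finiteness assumption. By the same degree argument, the only possibly nonzero component of each $m_k$ with $3\le k\le n$ lives on $(H^1)^{\otimes k}\subset A_{H^1}^{\otimes k}$. So (a) for $V=H^1$ says exactly that $m_3=\cdots=m_n=0$ on all of $\Hb$, which is $A_n$-formality. This generalises the argument in Proposition \ref{prop:global-two-step} from $n=3$ to arbitrary $n$.

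I see no serious obstacle; the work was done in Theorem \ref{thm:finite-lifting}. The one thing to confirm is that the induction in Proposition \ref{prop:defining-system-comparison} really produces one minimal structure killing all $m_3,\dots,m_n$ on $A_V$ simultaneously, which is exactly what the notion of being $V_{n+1}^\beta$-nice records. This is also what makes the corollary consistent with the paper's later characterisation of genuine formality via a compatible family: $\UA_n(H^1)=\varprojlim_W\UA_n(W)$ by \eqref{eq:U-limit}, so a solution of $\mathbf E^{(n)}_{H^1}$ is the same thing as a compatible family of solutions of the finite problems $\mathbf E^{(n)}_W$.
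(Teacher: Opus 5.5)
Your proposal is correct and follows the paper's proof, which simply reads the corollary off Theorem~\ref{thm:finite-lifting} together with the definitions of weak and genuine $A_n$-formality. The extra check you add is exactly the degree argument the paper leaves implicit here and spells out for $n=3$ in Proposition~\ref{prop:global-two-step}: by strict unitality and $H^i=0$ for $i\ge 3$, vanishing of $m_k$ on $A_V^{\otimes k}$ is the same as vanishing on $V^{\otimes k}$, and for $V=H^1$ it is the same as $m_k=0$.
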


\begin{proof}
This follows from Theorem \ref{thm:finite-lifting} and the definition of weak and genuine $A_n$-formality, respectively. 
\end{proof}

%%%%%%%%%%%%%%%%
%
%%%%%%%%%%%%%%%%

%
Corollary \ref{cor:weak-embedding} relates weak $A_n$-formality to the existence of 
a solution to every finite embedding problem, 
whereas genuine $A_n$-formality corresponds to the existence of a solution of the global embedding problem. 
Now we show how we can glue the finite data together. 
%%%%%%
%
Recall that we write $\mathcal F(H^1)$ for the directed poset of finite-dimensional 
vector subspaces of $H^1$.  
For $V\subset W$ in $\mathcal F(H^1)$, 
let
$r_{WV} \colon \UA_n(W)\twoheadrightarrow\UA_n(V)$  
be the homomorphism induced by restriction on Pontryagin duals.  
The surjection $r_{WV}$ 
sends a solution of $\mathbf E^{(n)}_W$ to a solution of 
$\mathbf E^{(n)}_V$. 
Let $\Sol(\mathbf E^{(n)}_V)$ 
denote the set of   solutions of $\mathbf E^{(n)}_V$. 
We then get transition maps 
\[
\pi^{(n)}_{WV} \colon \Sol(\mathbf E^{(n)}_W)\longrightarrow\Sol(\mathbf E^{(n)}_V),
~ \rho_n \longmapsto r_{WV}\circ\rho_n.
\]
These maps make the solution sets into a projective system indexed by
$\mathcal F(H^1)$. 
%
%%%%%%%%
%
%
Now we can prove the following important technical result.  

\begin{theorem}
\label{thm:compatible-defining-systems}
Let $G$ be a profinite group such that $H^i(G,\F_p) = 0$ for $i \ge 3$, 
and let $n \ge 3$. 
Then the following are equivalent:
\begin{enumerate}
\item[\textnormal{(a)}] $\Cb(G,\F_p)$ is $A_n$-formal; 
\item[\textnormal{(b)}] the finite embedding problems $\mathbf E^{(n)}_V$ admit a compatible
family of solutions, i.e.,  
\[
\varprojlim_{V\in\mathcal F(H^1)}\Sol(\mathbf E^{(n)}_V)\ne\varnothing.
\]
\end{enumerate}
\end{theorem}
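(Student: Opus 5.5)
By Corollary \ref{cor:weak-embedding}, condition (a) is equivalent to the solvability of the single embedding problem $\mathbf E^{(n)}_{H^1}\colon G\xrightarrow{\chi_{H^1}}(H^1)^\vee\twoheadleftarrow\UA_n(H^1)$. The plan is therefore to identify the set $\Sol(\mathbf E^{(n)}_{H^1})$ with the inverse limit $\varprojlim_{V\in\mathcal F(H^1)}\Sol(\mathbf E^{(n)}_V)$. Once this is done, the two nonemptiness conditions coincide and the theorem follows.

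\textbf{The comparison map.} For every $V\in\mathcal F(H^1)$ the restriction $r_{H^1V}\colon\UA_n(H^1)\to\UA_n(V)$, dual to the inclusion $K_m(V)\subseteq K_m(H^1)$, is a continuous homomorphism. Composing a solution $\rho$ of $\mathbf E^{(n)}_{H^1}$ with it gives a solution of $\mathbf E^{(n)}_V$. Indeed, the composite $(H^1)^\vee\to V^\vee$ sends $\chi_{H^1}$ to $\chi_V$, as in the proof of Lemma \ref{lemma:chi_V_continuous_surjective}. The relation $r_{WV}\circ r_{H^1W}=r_{H^1V}$ shows that these solutions are compatible, so we obtain a map
\[
\Phi\colon\Sol(\mathbf E^{(n)}_{H^1})\longrightarrow\varprojlim_{V\in\mathcal F(H^1)}\Sol(\mathbf E^{(n)}_V).
\]

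\textbf{Inverse of $\Phi$.} By \eqref{eq:U-limit} applied with $V=H^1$, the maps $r_{H^1V}$ assemble to an isomorphism of topological groups $\UA_n(H^1)\cong\varprojlim_V\UA_n(V)$. A compatible family $(\rho_V)_V$ of continuous homomorphisms $G\to\UA_n(V)$ therefore defines, by the universal property of the inverse limit of topological groups, a unique continuous homomorphism $\rho\colon G\to\UA_n(H^1)$ with $r_{H^1V}\circ\rho=\rho_V$ for all $V$.

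It remains to check that $\rho$ lifts $\chi_{H^1}$. Its image in $(H^1)^\vee\cong\varprojlim_V V^\vee$ has $V$-component $\chi_V$ for every $V$. Since $\chi_{H^1}$ has the same components, the two maps agree. The two constructions are mutually inverse, so $\Phi$ is a bijection.

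\textbf{Main obstacle.} The only delicate point is topological: we need continuity of the glued map and the fact that the limit isomorphism \eqref{eq:U-limit} is a homeomorphism, not merely a group isomorphism. The paper has already established this, so the remaining verifications are formal.

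It is worth noting that nonemptiness of each individual $\Sol(\mathbf E^{(n)}_V)$ does not imply nonemptiness of the limit. The sets $\Sol(\mathbf E^{(n)}_V)$ are finite, but the index set $\mathcal F(H^1)$ is uncountable in general, so no compactness argument is invoked. The statement being proved is precisely the identification of the limit with $\Sol(\mathbf E^{(n)}_{H^1})$, and this needs no such argument.
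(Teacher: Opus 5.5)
Your proof is correct and is essentially the paper's. For (a)$\Rightarrow$(b) you restrict a solution of $\mathbf E^{(n)}_{H^1}$ along the maps $r_{H^1V}$. For (b)$\Rightarrow$(a) you glue through \eqref{eq:U-limit}, check componentwise that the glued map lifts $\chi_{H^1}$, and invoke Corollary \ref{cor:weak-embedding}. The only differences are cosmetic: the paper glues pointwise and checks the homomorphism property and continuity by hand, whereas you cite the universal property, and your bijection $\Phi$ is the same argument stated slightly more sharply.

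Your closing remark is wrong, however. The sets $\Sol(\mathbf E^{(n)}_V)$ are in general infinite. Since $1+P_n(V)$ is central in $\UA_n(V)$, any solution can be twisted by an element of $\Hom_{\cont}(G,P_n(V))\cong\Hom_{\F_p}(K_n(V),H^1)$, and this group is infinite as soon as $K_n(V)\neq 0$ and $\dim H^1=\infty$. That infiniteness, not the uncountability of $\mathcal F(H^1)$, is what blocks a compactness argument. A directed inverse limit of nonempty finite sets is always nonempty, for any directed index set. So finiteness would make weak $A_n$-formality imply $A_n$-formality, contradicting Theorems \ref{thm:intro-weak} and \ref{thm:intro-main} for $n=3$.
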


\begin{proof}
Suppose first that $\Cb(G,\Fp)$ is $A_n$-formal. 
By Corollary \ref{cor:weak-embedding}, the
global embedding problem
$G \xrightarrow{\chi_{H^1}} (H^1)^\vee \twoheadleftarrow \UA_n(H^1)$  
has a solution $\rho \colon G \to \UA_n(H^1)$. 
For every $V \in \mathcal F(H^1)$, 
let $r_V \colon \UA_n(H^1) \to \UA_n(V)$ be restriction and put
$\rho_V \coloneqq r_V\circ\rho$. 
Then $\rho_V$ is a solution of
$\mathbf E^{(n)}_V$, and for $V\subset W$ one has
$r_{WV}\circ\rho_W=\rho_V$.  
Thus $(\rho_V)_V$ is a point of
$\varprojlim_V \Sol(\mathbf E^{(n)}_V)$. 
This proves that (a) implies (b). 
%
%%%%%%%

Now we assume the projective system is nonempty and let  
\[
(\rho_V)_V \in \varprojlim_V \Sol(\mathbf E^{(n)}_V).
\]
For each $g\in G$, the elements $(\rho_V(g))_V$ form a compatible family.
By \eqref{eq:U-limit}, they determine a unique element $\rho(g)\in \UA_n(H^1)$.
This defines a map $\rho \colon G\to \UA_n(H^1)$.
For $g,h\in G$ and every $V \in \mathcal F(H^1)$, 
we have 
\[
r_V\bigl(\rho(gh)\bigr)=\rho_V(gh)
=\rho_V(g)\rho_V(h)
=r_V\bigl(\rho(g)\rho(h)\bigr).
\]
The projections in \eqref{eq:U-limit} separate points, so $\rho$ is a group homomorphism. 
It is continuous since \eqref{eq:U-limit} is a homeomorphism 
and every component $r_V\circ\rho=\rho_V$ is continuous. 
Let $q \colon \UA_n(H^1) \twoheadrightarrow (H^1)^\vee$ be the canonical quotient. 
For every $V \in \mathcal F(H^1)$, %For every finite-dimensional $V$, 
the $V^\vee$-component of $q\circ\rho$ is
$G\xrightarrow{\rho_V} \UA_n(V) \twoheadrightarrow V^\vee$, 
which equals $\chi_V$. 
Since the $V$-component of $\chi_{H^1}$ is $\chi_V$, 
it follows that
$q\circ\rho=\chi_{H^1}$. 
Thus, $\rho$ solves the embedding problem $\mathbf E^{(n)}_{H^1}$. 
Corollary \ref{cor:weak-embedding} 
now implies that $\Cb(G,\Fp)$ is $A_n$-formal. 
This proves that (b) implies (a). 
\end{proof}

%%%%%%%%%%%%%%%%%%
%
%%%%%%%%%%%%%%%%%%

\section{Failure of weak \texorpdfstring{$A_3$}{A3}-formality at the prime two} 
\label{sec:weak-counterexamples}

Before we apply the results of Section \ref{sec:finite-embedding-comparison} for odd primes, 
we provide some explicit examples of fields for which weak $A_3$-formality fails at the prime $p=2$. 

\begin{proposition}
\label{prop:nonA3-examples}
The differential graded algebra $\Cb(G_F,\F_2)$ is not weakly $A_3$-formal % in degree one 
in each of the following cases: 
%\Gereon{are (ii) or (iii) new? clarify}
\begin{enumerate}
\item[(i)] $F$ is a finite extension of $\Q$ of odd degree, 
\item[(ii)] $F=\mathbb F_{\ell}(t)$, where $\ell \equiv 3 \pmod 8$,
\item[(iii)] $F=k(s,u,w)$, where $k$ is any field of characteristic
different from $2$.
\end{enumerate}
\end{proposition}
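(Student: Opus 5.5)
We will derive the failure of weak $A_3$-formality from the failure of the \emph{cup-defining fourfold Massey property}, and then cite known examples where that property fails. We cannot use the embedding-problem machinery of Section \ref{sec:finite-embedding-comparison} here, because in all three cases $H^i(G_F,\F_2)$ may be nonzero for $i\ge3$ (real places in (i), large cohomological dimension in (iii)). So we work directly with Definition \ref{def:weak-An} and the transferred structure.

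\textbf{Step 1: weak $A_3$-formality gives defined fourfold Massey products.} Let $a,b,c,d\in H^1(G_F,\F_2)$ with $a\cup b=b\cup c=c\cup d=0$, and put $V=\langle a,b,c,d\rangle$. By weak $A_3$-formality we choose a minimal structure and an $A_\infty$-quasi-isomorphism $(f_1,f_2,f_3,\ldots)$ with the following properties:
\begin{itemize}
\item $m_3|_{V^{\otimes 3}}=0$;
\item $f_1|_{H^1}$ is the identity $H^1=\Zh^1$ (adjusting by an $A_\infty$-isomorphism with identity linear term if needed).
\end{itemize}
We set $a_{12}=f_2(a,b)$, $a_{23}=f_2(b,c)$ and $a_{34}=f_2(c,d)$. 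Each of these has differential equal to minus the corresponding cup product, so they are a valid start of a defining system. The third $A_\infty$-morphism identity says $f_1(m_3(x,y,z))=\kappa(x,y,z)+\delta f_3(x,y,z)$. For $(a,b,c)$ the two middle terms of $\kappa$ vanish because $a\cup b=b\cup c=0$ and $f_2$ is normalised. Since $m_3(a,b,c)=0$, the cocycle
\[
a\cup a_{23}+a_{12}\cup c
\]
is a coboundary, and likewise $b\cup a_{34}+a_{23}\cup d$ is a coboundary. The key point is that the \emph{same} $a_{23}$ appears in both expressions. Hence we can choose $a_{13},a_{24}$ completing a defining system, so $\langle a,b,c,d\rangle$ is defined. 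Signs do not matter since $p=2$.

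\textbf{Step 2: cite counterexamples.} It remains to exhibit, for each field in the list, classes $a,b,c,d$ with vanishing neighbouring cup products but with $\langle a,b,c,d\rangle$ not defined.
\begin{itemize}
\item Case (i): we take the Harpaz--Wittenberg example \cite[Example A.15]{GMT} and its generalisation by Merkurjev--Scavia to number fields \cite{MSNF}.
\item Cases (ii) and (iii): we use the examples of Merkurjev--Scavia \cite[Theorem 1.6, Proposition 5.3]{MS1}, which give precisely $\F_\ell(t)$ with $\ell\equiv 3\pmod 8$ and $k(s,u,w)$.
\end{itemize}

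\textbf{Main obstacle.} We expect the main difficulty to be case (i) in full generality. The cited results may be stated only for $\Q$, or under extra hypotheses, and non-definedness does not obviously pass from $\Q$ to an odd-degree extension $F$. We would first check whether \cite{MSNF} already covers all odd-degree extensions. If it does not, we would transport the $\Q$-example by restriction $a\mapsto\res_{F/\Q}a$. To show that $\langle a,b,c,d\rangle$ stays undefined over $F$, we would use that restriction followed by corestriction is multiplication by $[F:\Q]$, which is odd and hence invertible mod $2$. We would apply this to the obstruction to completing defining systems, reducing to the local obstruction at a place of $F$ of odd local degree over the relevant rational place. Making this corestriction argument compatible with the non-linear choices inside defining systems is the delicate point.
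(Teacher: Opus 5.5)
\textbf{Step 1 is correct}: it is a self-contained version of the observation the paper quotes from \cite{PQodd} (Remark~\ref{rem:p2_obstruction_to_weak_A3}), and it uses nothing about $H^{\ge 3}$.

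\textbf{The gap is Step 2}, which is where all the content of the proposition lies. The references you invoke do not supply it. According to the paper, \cite[Proposition 5.3]{MS1} is the Harpaz--Wittenberg example over $\Q$. Case (ii) is presented as new, and it needs an actual computation, which the paper carries out as follows.
\begin{enumerate}
\item[(a)] Take $b=2$, $c=t^2+1$ and $a=d=bc$.
\item[(b)] Check $(2,2)=(2,c)=(c,c)=0$, using $\alpha^2=-2$ in $\F_\ell$.
\item[(c)] Apply the Serre--Tate multinorm criterion to $q=t+1$. The only nontrivial local symbol is at the place $t^2+1$. There $1+i$ is a non-square in $\F_{\ell^2}$, because its norm $2$ is a non-square in $\F_\ell$. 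Hence $t+1\notin N_2N_cN_{2c}$, and so $(t+1)^2\notin N_{2,c}$.
\item[(d)] The explicit element $(t+1+\sqrt{t^2+1}-\sqrt2)/\sqrt2$ has norm $-(t+1)^2$. Together with (c) this gives $-1\notin N_{2,c}$.
\item[(e)] Conclude with the Merkurjev--Scavia criterion \cite[Theorem 1.4]{MS1}: $\langle bc,b,c,bc\rangle$ is defined if and only if $-1\in N_{b,c}$.
\end{enumerate}
For (iii), the source is not \cite{MS1}. It is the undefined product $\langle 1-s,s,t,1-t\rangle$ built in the proof of \cite[Theorem 1.3]{MSNF} over the function field of the conic of $(s,t)$. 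You still have to identify that function field with $k(s,u,w)$ via the chart $t=w^2-su^2$.

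\textbf{Case (i).} The result of \cite{MSNF} you cite concerns function fields of conics, not number fields. So the transfer from $\Q$ to an odd-degree $F$ is left open, and the route you sketch fails at its local step. The local mod-$2$ cochain algebras are $A_3$-formal \cite{PQtwo}, so by your own Step~1 the product $\langle 34,2,17,34\rangle$ is defined over every completion of $\Q$. Equivalently, $-1$ is a local norm from $\Q(\sqrt2,\sqrt{17})$ everywhere. The non-definedness over $\Q$ is therefore purely global, and there is no local obstruction to reduce to.

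The paper instead combines the norm criterion with transitivity of norms. If $-1$ were a norm from $\Q(\sqrt2,\sqrt{17})\otimes_{\Q}F$ to $F$, then applying $\Nm_{F/\Q}$ would make $(-1)^{[F:\Q]}=-1$ a norm from $\Q(\sqrt2,\sqrt{17})$.

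Your corestriction instinct can also be made to work, globally and with no local step, because definedness of a fourfold product is an affine-linear condition.
\begin{enumerate}
\item[(a)] Fix $a_{12},a_{23},a_{34}$, and let $\omega\in H^2\oplus H^2$ be the resulting pair of triple obstructions.
\item[(b)] All other choices change $\omega$ by $L(x,y,w)=(a\cup y+x\cup c,\ b\cup w+y\cup d)$ with $x,y,w\in H^1$. So the product is defined if and only if $\omega\in\im L$.
\item[(c)] For classes restricted from $\Q$, the projection formula gives $\operatorname{cor}\circ L_F=L_{\Q}\circ\operatorname{cor}$.
\item[(d)] $\operatorname{cor}\circ\res$ is multiplication by the odd number $[F:\Q]$. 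Hence $\res\,\omega\in\im L_F$ forces $\omega\in\im L_{\Q}$.
\end{enumerate}
This would even be more general than the paper's argument, since it applies to any fourfold product and any odd-degree extension. But as written, the proposal neither carries it out nor recognises that the non-linearity you worry about is absent.
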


\begin{remark}
We note that, to the best of our knowledge, assertion (ii) in Proposition \ref{prop:nonA3-examples} is new. 
Assertion (i) follows from the Harpaz--Wittenberg example together with \cite[Corollary 3.9]{MS1},   
as we explain in the proof. 
Assertion (iii) is new in the presented form, 
although it is the $p=2$ specialisation of the construction in \cite[Theorem~1.3]{MSNF} 
as we explain in the proof of  Proposition \ref{prop:nonA3-examples} below. 
However, the three-variable rational-function-field formulation does not appear to be stated explicitly in \cite{MSNF}. 
\end{remark}

%%%%%%%%%%%%%%%%%%

\begin{remark}\label{rem:p2_obstruction_to_weak_A3}
For the proof of Proposition \ref{prop:nonA3-examples}, 
we use the following consequence of weak $A_3$-formality. 
If the minimal structure can be chosen with $m_3$
vanishing on the span of four classes $a,b,c,d\in H^1$, and if
$a \cup b = b \cup c = c \cup d=0$,  
then the fourfold Massey product $\langle a,b,c,d\rangle$ is defined,
see  for example \cite[proof of Proposition 2.17]{PQodd}.  % change to Prop 2.17 after {PQodd} is updated on the arxiv.
Thus, a single four-tuple violating this property obstructs weak $A_3$-formality. 
\end{remark} 

%%%%%%%%%%%%

\begin{notn}
We use the following notation from \cite[page 3502]{MS1}.
Let $F$ be a field of characteristic different from $2$. 
For $a \in F^{\times}$, we let $F_a$ be the quadratic \'etale $F$-algebra $F[x_a]/(x_a^2-a)$, 
and write $N_a$ for the image of the norm map $F_a^\times \to F^\times$. 
For  $b,c\in F^\times$, 
we write $F_{b,c} \coloneqq F[x_b,x_c]/(x_b^2-b,x_c^2-c)$  
for the associated degree-four finite \'etale $F$-algebra, and
$N_{b,c}  \coloneqq \Nm_{F_{b,c}/F}(F_{b,c}^{\times})\subseteq F^\times$ 
for its norm group.  
We identify $F^\times/F^{\times 2}$ with
$H^1(G_F,\F_2)$ and write $(b,c)$ for the corresponding quaternion symbol in $H^2(G_F,\F_2)\cong \Br(F)[2]$. 
\end{notn}

%%%%%%%%%%%%%%%%

\begin{lemma}\label{lemma:odd-degree}
Let $F$ be a field of characteristic different from $2$, and suppose
that the neighbouring symbols of
$\langle bc,b,c,bc\rangle$ vanish. 
Then: 
\begin{enumerate}
\item[\textnormal{(i)}] The Massey product $\langle bc,b,c,bc\rangle$ is defined if and only if $-1\in N_{b,c}$. 
\item[\textnormal{(ii)}] If $\langle bc,b,c,bc\rangle$ is not defined over $F$ and $E/F$ is a finite extension of odd degree, 
then $\langle bc,b,c,bc\rangle$ remains non-defined after base change to $E$.
\end{enumerate}
\end{lemma}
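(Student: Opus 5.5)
Part (i) is essentially the criterion of Merkurjev--Scavia, and I plan to import it rather than reprove it. By \cite[Corollary 3.9]{MS1} (or the surrounding discussion on page 3502), for a field of characteristic different from $2$ and classes $a,b,c,d$ with $(a,b)=(b,c)=(c,d)=0$, the fourfold Massey product $\langle a,b,c,d\rangle$ is defined if and only if a certain norm condition holds. I will specialise this to $a=d=bc$. The neighbouring symbols are then $(bc,b)=(-1,b)\cdot(c,b)$-type expressions, which vanish by hypothesis. The norm condition should collapse to the statement that $-1$, or more precisely the class of $a\cdot d=(bc)^2$ twisted by the relevant norms from $F_b$ and $F_c$, lies in $N_{b,c}$.

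If a direct citation in exactly this form is not available, I would argue through the embedding-problem description. A defining system for $\langle bc,b,c,bc\rangle$ corresponds to a lift of $G_F\to(\Z/2)^2$ (given by $b,c$) to the unipotent group $\mathbb U_5(\F_2)$ with the prescribed superdiagonal. Its solvability is then translated, via the exact sequences for the norm tori of $F_b$, $F_c$ and $F_{b,c}$, into a norm equation. The computation proceeds in three steps. Vanishing of $(b,c)$ gives $c\in N_b$. Vanishing of $(bc,b)$ and $(c,bc)$ lets me choose compatible norm representatives. The remaining obstruction to choosing both outer triple products $\langle bc,b,c\rangle$ and $\langle b,c,bc\rangle$ compatibly becomes $-1\in N_{b,c}$, where the sign $-1$ arises from $(bc,bc)=(bc,-1)$. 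I expect this bookkeeping of the sign $-1$ to be the main obstacle. I would first try to match the paper's statement with \cite[Proposition 5.3]{MS1}, where the Harpaz--Wittenberg example is analysed in exactly this shape.

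For (ii), I use that norm groups behave well under odd-degree extensions. Suppose $-1\in N_{b,c}(E)$, so $-1=\Nm_{E_{b,c}/E}(x)$. Applying $\Nm_{E/F}$ and using transitivity of norms, $\Nm_{E/F}(-1)=(-1)^{[E:F]}=-1$ becomes a norm from $F_{b,c}\otimes_F E$ to $F$. This factors through $\Nm_{F_{b,c}/F}$, so $-1\in N_{b,c}(F)$. The vanishing of the neighbouring symbols is preserved by base change. By (i), over $E$ the Massey product is defined if and only if $-1\in N_{b,c}(E)$, and we have just shown that this forces the product to be defined over $F$, a contradiction. The one point to check is that $E_{b,c}=F_{b,c}\otimes_F E$ as étale algebras, so that $\Nm_{E/F}\circ\Nm_{E_{b,c}/E}=\Nm_{F_{b,c}/F}\circ\Nm_{E_{b,c}/F_{b,c}}$; this holds by transitivity of norms for finite étale algebras.
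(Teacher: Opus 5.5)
Your proposal is correct and follows the paper. Part (i) is simply imported from Merkurjev--Scavia; the paper cites \cite[Theorem 1.4]{MS1}. Your fallback sketch is therefore not needed, and as written it is slightly off: defining systems correspond to lifts to $\mathbb{U}_5(\F_2)$ modulo its centre, while lifts to $\mathbb{U}_5(\F_2)$ itself encode vanishing. Part (ii) is exactly the paper's argument: transitivity of norms for $F_{b,c}\otimes_F E$, the identity $\Nm_{E/F}(-1)=(-1)^{[E:F]}=-1$, and preservation of the vanishing neighbouring symbols under restriction.
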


\begin{proof}
Assertion (i) is a special case of \cite[Theorem 1.4]{MS1}. %
For (ii), the first assertion gives  
$-1\notin \Nm_{F_{b,c}/F}(F_{b,c}^{\times})$. 
Suppose that $-1$ becomes a norm after extending scalars to $E$.  
Then there is an element 
$z\in (F_{b,c}\otimes_F E)^\times$  
with $\Nm_{F_{b,c}\otimes_F E/E}(z)=-1$. 
Compatibility and transitivity of norms 
for finite \'etale  algebras give 
\[
\Nm_{F_{b,c}/F}
 \bigl(\Nm_{F_{b,c}\otimes_F E/F_{b,c}}(z)\bigr)  =\Nm_{E/F}(-1)   =(-1)^{[E:F]} =-1. 
 \] 
The neighbouring symbols remain zero after base change 
since restriction preserves their vanishing.
This contradicts the assumption and proves (ii). 
\end{proof}

%%%%%%%%%%%%

\begin{proof}[Proof of Proposition \ref{prop:nonA3-examples}]
For (i), the example of Harpaz--Wittenberg in \cite[Example A.15]{GMT}, 
which is also recovered in \cite[Proposition 5.3]{MS1}, 
shows $\angles{bc,b,c,bc}$ for $b=2$ and $c=17$ 
is not defined over $\Q$ 
while the neighbouring cup-products vanish. 
Let $F/\Q$ be a finite extension of odd degree. 
By Lemma \ref{lemma:odd-degree}, $\angles{34,2,17,34}$ is then not defined over $F$ 
while the neighbouring cup-products vanish. 
This implies that $\Cb(G_F,\F_2)$ is not weakly $A_3$-formal. 

%%%%%%%%%%%%%

For (ii), we assume $\ell \equiv 3 \pmod8$ and write $F = \F_{\ell}(t)$. 
We set $b=2$, $c=t^2+1$, and $a=d=bc=2(t^2+1)$. 
Since both $-1$ and $2$ are non-squares in $\F_{\ell}$, 
the element $-2$ is a square.  
We choose $\alpha \in \F_{\ell}$ with  $\alpha^2=-2$.  
Then $(t^2+1)\cdot1^2 + 2 (\alpha t/2)^2=1$,
and hence $(2,t^2+1)=0$.  
Moreover, 
$2\cdot1^2+2\cdot1^2=2^2$, 
and $(t^2+1)\cdot1^2-1\cdot1^2=t^2$, 
so $(2,2)=0$ and $(t^2+1,t^2+1)=0$. 
This implies $(a,b)=(b,c)=(c,d)=0$. 
We claim that
$-1\notin N_{2,t^2+1}$. 
Set $L  \coloneqq F(\sqrt2,\sqrt{t^2+1})$.  
At every place $v$ of $F$, the local degree of $L/F$ is $1$ or $2$.  
Indeed, if $2$ is a square in $F_v$ there
is nothing to prove.  
Otherwise, $F_v(\sqrt2)/F_v$ is the unramified quadratic extension.  
Since $(2,t^2+1)_v=0$, the element $t^2+1$ is a norm
from that extension and therefore has even valuation.  
Its local square  class is consequently either trivial or the class of the non-square unit $2$, 
so adjoining both square roots still gives degree at most $2$.
%
%%%%%%
%
We may therefore use the Serre--Tate multinorm criterion in the form used in the proof of 
\cite[Lemma 5.2 on page 3530]{MS1}.  
Let $S_1$ be the set of places of $F$ which split
in $F(\sqrt2)$ and define
$\omega(q) \coloneqq \prod_{v\in S_1}(t^2+1,q)_v$. 
Then, with $c= t^2+1$, 
\begin{equation}\label{eq:ST-multinorm}
q\in N_2N_cN_{2c}
~ \text{if and only if} ~ \omega(q)=1. 
\end{equation}
We apply this to $q=t+1$. 
%
%%%%
%
The quadratic extension $F(\sqrt2)/F$ is the constant extension
$\F_{\ell^2}(t)/\F_{\ell}(t)$, so a finite place splits precisely
when its degree is even.  
The only place in $S_1$ at which
$(c,t+1)_v$ can be non-trivial is the degree-two place
$v_c=(t^2+1)$.  
Its residue field is $\F_{\ell^2} = \F_{\ell}(i)$, $i^2=-1$, and the
residue of $t+1$ is $1+i$.  
Now
$N_{\F_{\ell^2}/\F_{\ell}}(1+i)=2$.   
For $z\in\F_{\ell^2}^{\times}$, $z$ is a square if and only if its
norm to $\F_{\ell}$ is a square.  
Since $2$ is a non-square in
$\F_{\ell}$, the element $1+i$ is a non-square in $\F_{\ell^2}$.
Thus
$(c,t+1)_{v_c}=-1$, 
and therefore
$\omega(t+1)=-1$.  
By \eqref{eq:ST-multinorm},
$t+1\notin N_2N_cN_{2c}$.  
Now \cite[Lemma A.3(2)]{MS1} implies $(t+1)^2\notin N_{2,c}$. 
On the other hand, there is a very simple explicit norm giving the opposite sign. 
We write
$r = \sqrt2$, $s=\sqrt{t^2+1}$, 
and set
$\rho \coloneqq \frac{t+1+s-r}{r}\in L^\times$.  
Multiplication over the four choices of signs of $r$ and $s$ gives
\[
\prod_{\epsilon,\eta=\pm1}
(t+1+\eta s-\epsilon r)=-4(t+1)^2,
\]
whereas $N_{L/F}(r)=4$.  
Hence $N_{2,c}(\rho)=-(t+1)^2$. 
If $-1$ belonged to $N_{2,c}$, dividing $N_{2,c}(\rho)=-(t+1)^2$ 
by such a norm would imply that
$(t+1)^2\in N_{2,c}$, contradicting $(t+1)^2\notin N_{2,c}$. 
Thus,  $-1\notin N_{2,c}$, and (i) in Lemma \ref{lemma:odd-degree} shows that
\[
\langle 2(t^2+1),2,t^2+1,2(t^2+1)\rangle
\]
is not defined.  This proves (ii).

%%%%%%%%%%%%

For (iii), in the proof of \cite[Theorem 1.3]{MSNF},   
Merkurjev--Scavia construct the non-defined fourfold product
$\langle1-s,s,t,1-t\rangle$ over the function field of the
Severi--Brauer conic of $(s,t)$. 
%
%\cite[Theorem 1.3]{MSNF}.  
%
On the affine chart $t=w^2-su^2$,  
this function field is the fraction field 
$\mathrm{Frac}(k(s,t)[u,w]/(t-w^2+su^2)) \cong k(s,u,w)$, 
giving the explicit Massey product 
\[
\left\langle
1-s,\ s,\ w^2-su^2,\ 1-w^2+su^2
\right\rangle
\]
which is not defined. 
\end{proof}

%%%%%%%%%%%%%%%%%%%
%%
%%%%%%%%%%%%%%%

\section{The \texorpdfstring{$A_n$}{An}-local--global principle for number fields}\label{sec:numberfields-local-global}

Now we apply the results of Section \ref{sec:finite-embedding-comparison} 
to the Galois cochains of a number field and odd primes $p$. 
Let $K$ be a number field and let $v$ be a place of $K$. 
Let $G_K$ and $G_{K_v}$ denote the absolute Galois groups of $K$ and $K_v$ respectively. 
The choice of an embedding $\overline{K} \into \overline{K}_v$ induces 
an injective homomorphism $\iota_v \colon G_{K_v} \into G_K$ 
whose conjugacy class is independent of the initial choice. 
Let $M$ be a finite $G_K$-module. 
% 
%For a place $v$ of $K$, w
We write 
$\loc_{v} \colon \Hb(G_K,M) \to \Hb(G_{K_v},M)$ for the localisation homomorphism 
induced by restriction along $G_{K_v} \into G_K$. 
For $c \in H^i(G_K,\F_p)$, we also often write $c_v = \loc_v(c)$ to shorten the notation. 
Let  $V \subset H^1(G_{K},\F_p)$ be a finite-dimensional vector subspace, 
and let $W \coloneqq \loc_v(V) \subseteq H^1(G_{K_v},\F_p)$ be the image of $V$ under the localisation map $\loc_v$. 
Since localisation sends $R_V$ into $R_W$, it sends $K_m(V)$ into $K_m(W)$ for every $m$. 
Dualising gives compatible homomorphisms $P_m(W)\to P_m(V)$ and hence a homomorphism $\UA_n(W) \to \UA_n(V)$.
Then we get a diagram of embedding problems 
\begin{align}\label{eq:diagram_embedding_problems}
\xymatrix{
\UA_n(W) \ar[d] \ar@{->>}[r] & W^\vee \ar[d] & \ar[l]_{\chi_{W}} \ar[dl]^{\chi_{V,v}} G_{K_v} \ar[d] \\
\UA_n(V) \ar@{->>}[r] & V^\vee & \ar[l]^{\chi_{V}} G_{K}
}
\end{align}
in which the squares and triangles commute. 

\begin{lemma}
\label{lem:local-plans-general}
Let $v$ be a place of $K$, and assume that $H^i(G_{K_v},\F_p) =0 $ for $ i \ge 3$. 
Assume that $\Cb(G_{K_v},\Fp)$ is $A_n$-formal. 
Then, for every finite-dimensional vector subspace $V\subset H^1(G_K,\Fp)$, 
the localised embedding problem
\[
\mathbf E^{(n)}_{V,v} \colon G_{K_v} \xrightarrow{\chi_{V,v}} V^\vee \twoheadleftarrow \UA_n(V)
\]
has a solution.
\end{lemma}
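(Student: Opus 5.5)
The plan is to solve the local problem attached to $W \coloneqq \loc_v(V)$ and then push the solution down along the left vertical map of diagram \eqref{eq:diagram_embedding_problems}. Since $V$ is finite-dimensional, $W \subseteq H^1(G_{K_v},\Fp)$ is a finite-dimensional subspace. By hypothesis, $\Cb(G_{K_v},\Fp)$ is $A_n$-formal and $H^i(G_{K_v},\Fp)=0$ for $i\ge 3$. I will use these facts as follows.

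\textbf{Step 1: solve the problem for $W$.} Genuine $A_n$-formality implies weak $A_n$-formality: a minimal structure with $m_3=\dots=m_n=0$ restricts to zero on $A_W^{\otimes r}$ for every $r$. So condition (a) of Theorem \ref{thm:finite-lifting} holds for $G=G_{K_v}$ and the subspace $W$. That theorem gives a solution $\rho_W\colon G_{K_v}\to \UA_n(W)$ of $\mathbf E^{(n)}_W$, meaning that $\rho_W$ composed with the projection to $W^\vee$ equals $\chi_W$. Alternatively, one can invoke Corollary \ref{cor:weak-embedding} directly.

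\textbf{Step 2: push down to $V$.} Let $\lambda\colon \UA_n(W)\to\UA_n(V)$ be the homomorphism dual to $\loc_v\colon K_m(V)\to K_m(W)$, and set $\rho\coloneqq \lambda\circ\rho_W$. This is a continuous homomorphism between finite groups.

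\textbf{Step 3: check that $\rho$ solves $\mathbf E^{(n)}_{V,v}$.} Since the left square in \eqref{eq:diagram_embedding_problems} commutes, the composite of $\rho$ with $\UA_n(V)\to V^\vee$ equals the composite $G_{K_v}\xrightarrow{\chi_W}W^\vee\to V^\vee$. By the commuting triangle, this composite is $\chi_{V,v}$. Concretely, for $x\in V$ and $g\in G_{K_v}$, we have $\chi_W(g)(\loc_v x)=x(\iota_v g)=\chi_V(\iota_v g)(x)$.

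\textbf{Main obstacle.} The only point needing care is that $\lambda$ is a group homomorphism, that is, that the duals of $K_m(V)\to K_m(W)$ respect the products \eqref{eq:product_on_P}. This holds because the deconcatenation maps $\Delta_{i,j}$ of Lemma \ref{lem:deconcatenation} are natural with respect to the linear map $\loc_v^{\otimes m}$, which sends $R_V$ into $R_W$ since localisation is multiplicative on cohomology. Dualising this naturality gives multiplicativity of $P_\bullet(W)\to P_\bullet(V)$, hence an algebra map $\uA_n(W)\to\uA_n(V)$ and a group homomorphism $\UA_n(W)\to\UA_n(V)$.

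If $\loc_v$ is not injective on $V$, then $V^\vee\leftarrow W^\vee$ is injective rather than surjective, but this does not affect the argument: it only uses composition.
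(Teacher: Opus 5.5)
Your proposal is correct and is essentially the paper's proof. You use genuine $A_n$-formality to get condition (a) of Theorem \ref{thm:finite-lifting} for $W=\loc_v(V)$, solve $\mathbf E^{(n)}_W$ over $G_{K_v}$, and then compose with $\UA_n(W)\to\UA_n(V)$ using diagram \eqref{eq:diagram_embedding_problems}. Your checks that this map is a group homomorphism (via $\loc_v(R_V)\subseteq R_W$ and naturality of deconcatenation), and that the composite lies over $\chi_{V,v}$, spell out what the paper leaves implicit.
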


\begin{proof} 
Let $V\subset H^1(G_K,\Fp)$ be a finite-dimensional subspace, and let $W$ be its image under $\loc_v$. 
By Theorem \ref{thm:finite-lifting},  
the embedding problem 
$\mathbf E^{(n)}_W \colon G_{K_v}\xrightarrow{\chi_{W}} W^\vee  \twoheadleftarrow \Un(W)$ is solvable. 
We then see from diagram \eqref{eq:diagram_embedding_problems}, that this implies that $\mathbf E^{(n)}_{V,v}$ is solvable. 
\end{proof}

%%%%%%%%%%%%%%%%%

We now assume that $p$ is odd. We assume that  $K$ is a number field with $\mu_p\not\subset K$. 
Our next goal is to show a local--global principle for $A_n$-formality. 

\begin{proposition}
\label{prop:local-to-global}
Let $K$ be a number field, let $p$ be odd, assume
$\mu_p\not\subset K$, 
and let $n\ge3$.  
If $\Cb(G_{K_v},\Fp)$ is $A_n$-formal for every completion $K_v$ of $K$, 
then $\Cb(G_K,\Fp)$ is weakly $A_n$-formal. 
\end{proposition}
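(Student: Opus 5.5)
By Corollary \ref{cor:weak-embedding}, it suffices to show that for every finite-dimensional $V\subset H^1(G_K,\Fp)$ the finite embedding problem $\mathbf E^{(n)}_V \colon G_K\xrightarrow{\chi_V} V^\vee\twoheadleftarrow \UA_n(V)$ has a solution. We may apply that corollary because $p$ is odd, so $H^i(G_K,\Fp)=0$ for $i\ge3$; likewise $H^i(G_{K_v},\Fp)=0$ for $i\ge3$ at every place $v$. At the complex places this is trivial, and at the real places it holds because $p$ is odd and $G_{K_v}$ has order $2$.

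The plan is to run a local--global argument in three steps.

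\textbf{Local solvability.} Since each $\Cb(G_{K_v},\Fp)$ is $A_n$-formal, Lemma \ref{lem:local-plans-general} gives, for every place $v$, a local solution $\rho_v\colon G_{K_v}\to\UA_n(V)$ of $\mathbf E^{(n)}_{V,v}$. Its composite to $V^\vee$ is $\chi_V\circ\iota_v$.

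\textbf{Reduction to a global lifting theorem.} The group $\UA_n(V)$ is a finite $p$-group by Lemma \ref{lemma:nilpotent-algebra}, and $\chi_V$ is surjective by Lemma \ref{lemma:chi_V_continuous_surjective}. We are therefore asking for a global lift of a surjection $G_K\to V^\vee$ through the surjection of finite $p$-groups $\UA_n(V)\to V^\vee$, given local lifts at all places. This is the setting of the lifting theorem \cite[Theorem 3.2]{MMRT} of Maire--Min\'a\v c--Ramakrishna--T\^an. Under the hypothesis $\mu_p\not\subset K$, and $p$ odd, it produces a global homomorphism lifting a given one through a surjection of finite $p$-groups, provided local lifts exist everywhere. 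Only finitely many places matter: the ramified ones for $\chi_V$, the places above $p$, and the archimedean places. At the remaining places $\chi_V$ is unramified, and an unramified local lift always exists by lifting the Frobenius image.

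\textbf{Devissage.} Alternatively, or if the cited theorem is stated only for central kernels of order $p$, I would filter $N_{n,V}=\ker(\UA_n(V)\to V^\vee)$ by the degree filtration $1+\bigoplus_{m\ge k}P_m(V)$. Each successive quotient $P_k(V)$ is central in the corresponding quotient of $\UA_n(V)$, since products land in higher degree, and is an elementary abelian group with trivial action. I would then lift one step at a time. At each step the obstruction lies in $H^2(G_K,P_k(V))$, and its image in $\bigoplus_v H^2(G_{K_v},P_k(V))$ is controlled by the local solutions. The kernel of that localisation map is $\Sha^2(K,\Fp)\otimes P_k(V)$. By Poitou--Tate this is dual to $\Sha^1(K,\mu_p)$, which vanishes when $\mu_p\not\subset K$.

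\textbf{Main obstacle.} The hard point is compatibility. At each step, the lift already constructed must be adjusted by a global class in $H^1(G_K,P_{k-1}(V))$ so that its localisations agree with liftable local data, that is, with quotients of the $\rho_v$ up to the local ambiguity. This requires a Grunwald--Wang type approximation for $p$-torsion classes, possibly after allowing extra auxiliary ramification. That freedom is exactly what the MMRT lifting theorem provides, via Chebotarev and the Gras--Munnier criterion. I expect to invoke \cite[Theorem 3.2]{MMRT} directly rather than redo this step. The remaining work is to verify its hypotheses, namely finiteness of the $p$-groups and existence of all local lifts, which the first two steps supply.
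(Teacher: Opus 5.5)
Your proposal is correct and matches the paper's proof. The paper likewise takes local solutions $\rho_v$ from Lemma \ref{lem:local-plans-general} at the finitely many primes ramified in the extension cut out by $\chi_V$. It then applies \cite[Theorem 3.2]{MMRT} with $U=\UA_n(V)$ and $N=N_{n,V}$ to obtain a global solution of $\mathbf E^{(n)}_V$, and concludes with Corollary \ref{cor:weak-embedding}. Your d\'evissage paragraph is not needed: the cited lifting theorem already handles the compatibility step that, as you note, you leave incomplete there.
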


\begin{proof}
Let $V\subset H^1(G_K,\Fp)$ be a finite-dimensional vector subspace. 
Let $F_V/K$ 
be the elementary abelian $p$-extension determined by $\chi_V \colon G_K \to V^\vee$ 
such that 
$\Gal(F_V/K)\cong V^\vee$.  
Let $S$ be the finite set of finite primes ramified in $F_V/K$.  
For each $v\in S$, 
Lemma~\ref{lem:local-plans-general} gives a continuous  homomorphism %local plan 
$\rho_v \colon G_{K_v} \to \UA_n(V)$ 
lifting $\chi_{V,v}$.  
The group $\UA_n(V)$ is a finite $p$-group and
$N_{n,V}=\ker\bigl(\UA_n(V)\to V^\vee \bigr)$ 
is a normal subgroup.  
We may therefore apply the general lifting theorem of Maire--Min\'a\v c--Ramakrishna--T\^an \cite[Theorem 3.2]{MMRT} 
with $U=\UA_n(V)$, $N=N_{n,V}$, and $U/N \cong V^\vee$. 
It produces a finite Galois extension $L/K$ containing $F_V$ with
$\Gal(L/K)\simeq\UA_n(V)$ 
such that $G_K\to\Gal(L/K)$ is a solution of $\mathbf E^{(n)}_V$. 
Since $V$ was arbitrary, 
Corollary \ref{cor:weak-embedding} implies  weak $A_n$-formality for $\Cb(G_{K},\Fp)$. 
\end{proof}

%%%%%%%%%%%%%%%
%
For the converse, we need to lift embedding problems 
while preserving the cup-product relations. 
We now show that this can be achieved under our cyclotomic hypothesis. % this can be done.
For a number field $K$ and a finite $G_K$-module $M$, 
we denote by 
$\Sha^i(K,M) =   \ker (H^i(K,M)\to\prod_v H^i(K_v,M) )$  
the $i$-th Tate--Shafarevich group, where the product is taken over all places of $K$. 

\begin{lemma}
\label{lem:Sha2}
Let $K$ be a number field, $p$ be odd and $\mu_p\not\subset K$.  
Then $\Sha^2(K,\Fp) = 0$. 
\end{lemma}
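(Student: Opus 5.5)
The plan is to use Poitou--Tate duality to identify $\Sha^2(K,\Fp)$ with the Pontryagin dual of $\Sha^1(K,\mu_p)$, where $\mu_p=\Fp(1)$ is the Cartier dual of the trivial module $\Fp$. By \cite{NSW}, for any finite $G_K$-module $M$ there is a perfect pairing $\Sha^2(K,M)\times\Sha^1(K,M')\to\Q/\Z$ with $M'=\Hom(M,\mu)$. For $M=\Fp$ we have $M'=\mu_p$, so it suffices to show $\Sha^1(K,\mu_p)=0$.

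To compute $\Sha^1(K,\mu_p)$, let $L=K(\mu_p)$ and $\Delta=\Gal(L/K)$. Its order divides $p-1$, so it is prime to $p$, and it is nontrivial because $\mu_p\not\subset K$. The inflation--restriction sequence gives
\[
0\to H^1(\Delta,\mu_p)\to H^1(K,\mu_p)\to H^1(L,\mu_p)^{\Delta},
\]
and $H^1(\Delta,\mu_p)=0$ since $|\Delta|$ is prime to $p$. Hence restriction to $L$ is injective on $H^1(K,\mu_p)$. The same argument applies locally at every place, so a class in $\Sha^1(K,\mu_p)$ restricts to a class in $\Sha^1(L,\mu_p)$, and this restriction is injective on $\Sha^1(K,\mu_p)$.

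Over $L$ the module $\mu_p$ is trivial, so $H^1(L,\mu_p)=\Hom(G_L,\Fp)$. A homomorphism that is locally trivial everywhere cuts out a cyclic extension of degree dividing $p$ in which every place splits completely. By Chebotarev, such an extension is trivial, so $\Sha^1(L,\Fp)=0$. Therefore $\Sha^1(K,\mu_p)=0$, and by duality $\Sha^2(K,\Fp)=0$.

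The only delicate step is the exact form of the duality and the prime-to-$p$ descent; the Grunwald--Wang special case never arises because we work with $p$ odd and with exponent $p$ alone. Alternatively, one can cite \cite[Theorem 9.1.9]{NSW}, which states that $\Sha^1(K,\mu_p)=0$ holds except in the special case, and that case requires $p=2$.
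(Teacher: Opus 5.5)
Your proposal is correct and follows essentially the same route as the paper: Poitou--Tate duality reduces the claim to $\Sha^1(K,\mu_p)=0$. That vanishing then follows from Chebotarev over $K(\mu_p)$ together with inflation--restriction for the prime-to-$p$ group $\Delta$. One small remark: to land in $\Sha^1(K(\mu_p),\mu_p)$ you only need that restriction commutes with localisation, so the ``same argument applies locally'' step is not needed.
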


\begin{proof}
Poitou--Tate duality, see e.\,g.\,\cite[Theorem 8.6.7]{NSW}, 
identifies $\Sha^2(K,\Fp)$ with the dual of $\Sha^1(K,\mu_p)$.  
We recall why the latter vanishes.  % $K'=K(\mu_p)$.  
If a class in $H^1(G_K,\mu_p)$ is locally trivial, 
its restriction to $K(\mu_p)$ is a homomorphism to the now constant group $\mu_p$ which is trivial on every decomposition group.  
A nonzero such homomorphism determines a nontrivial cyclic extension of $K(\mu_p)$, 
and Chebotarev supplies a prime with nontrivial Frobenius, contradicting local triviality.  
Thus, the class restricts to zero over $K(\mu_p)$.  
Inflation--restriction and the nontrivial prime-to-$p$ cyclotomic action then force the original class to vanish.
\end{proof}

%%%%%%%%%%%%%

Next, we need to show that we can lift a subspace in local cohomology to global cohomology 
such that the local and global kernel of the cup-product are preserved. 
To do so, 
we use an approximation result from the work of Maire--Min\'a\v c--Ramakrishna--T\^an \cite[Proposition 3.1]{MMRT}
which says that, given finitely many local classes and a finite $p$-extension $F/K$, 
one can realise the local data by a global class after allowing one new tame ramified prime, 
and this new prime can be chosen to split completely in $F/K$.

\begin{proposition}\label{prop:relation-approx} 
Let $K$ be a number field, let $p$ be odd, assume $\mu_p\not\subset K$, and let $v$ be a finite place.  
Let $W\subset H^1(G_{K_v},\Fp)$ 
be finite-dimensional.  
Then there exists a finite-dimensional subspace
$V\subset H^1(G_K,\Fp)$ 
such that localisation induces an isomorphism 
$\loc_v \colon  V \xrightarrow{\cong} W$ 
and, under the tensor-square of this isomorphism,
$R_V \xrightarrow{\cong} R_W$. 
For every $n \ge 2$, the induced natural map 
$\UA_n(W)\xrightarrow{\cong}\UA_n(V)$ is then an isomorphism. 
\end{proposition}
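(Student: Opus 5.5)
Choose a basis $w_1,\dots,w_d$ of $W$ and realise it globally. By \cite[Proposition 3.1]{MMRT}, applied with the local data $w_i$ at $v$ and the trivial class at a finite auxiliary set of places, there are global classes $a_1,\dots,a_d\in H^1(G_K,\Fp)$ with $\loc_v(a_i)=w_i$. Each $a_i$ is allowed one new tame prime $q_i$ of ramification, and $q_i$ is chosen to split completely in the field cut out by the previously constructed classes. Set $V=\Fp a_1+\dots+\Fp a_d$. Since the $w_i$ are independent, $\loc_v\colon V\to W$ is an isomorphism. Hence $\loc_v\otimes\loc_v\colon V^{\otimes2}\to W^{\otimes 2}$ is an isomorphism, and it maps $R_V$ into $R_W$ because localisation is a ring map.

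The main step is the reverse inclusion: if $x=\sum \lambda_{ij}a_i\otimes a_j$ has $\sum\lambda_{ij}w_i\cup w_j=0$ in $H^2(G_{K_v},\Fp)$, then $\sum\lambda_{ij}a_i\cup a_j=0$ in $H^2(G_K,\Fp)$. By Lemma \ref{lem:Sha2}, $\Sha^2(K,\Fp)=0$, so it suffices to check that this class vanishes at every place $u$. At $u=v$ it vanishes by hypothesis. At archimedean places $H^2(G_{K_u},\Fp)=0$ because $p$ is odd. At a place $u\ne v$ where all $a_i$ are unramified, the $a_{i,u}$ lie in the unramified line $H^1_{nr}$, and $H^1_{nr}\cup H^1_{nr}=0$, so the class vanishes.

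What remains are the finitely many places where some $a_i$ ramifies, namely the original ramification of the chosen classes and the new tame primes $q_i$. This is where I expect the main obstacle. I would use the freedom in \cite[Proposition 3.1]{MMRT} to prescribe local behaviour at a finite set $T$ of places. First I would add all such auxiliary places (primes above $p$ and other bad places) to $T$ and demand $a_{i,u}=0$ there. Then I would handle the new primes by choosing each $q_i$ to split completely in $F_{V'}(\mu_p)$, where $V'$ is the span of the earlier $a_j$. At $q_i$ the earlier $a_j$ are then locally trivial, and $a_i\cup a_i=0$ for $p$ odd, since cup products of degree-one classes are alternating. So every local cup product at $q_i$ vanishes. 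At the earlier primes $q_j$ ($j<i$) I would impose $a_{i,q_j}=0$ as part of the prescribed local data. With these choices, every local component of $\sum\lambda_{ij}a_i\cup a_j$ away from $v$ is zero, and the Hasse principle gives the global vanishing. This proves $R_V\cong R_W$ under the tensor square of $\loc_v$.

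Finally, by Lemma \ref{lemma:K_VW}-type reasoning, the isomorphism $V^{\otimes m}\cong W^{\otimes m}$ carries each $V^{\otimes j}\otimes R_V\otimes V^{\otimes(m-2-j)}$ onto $W^{\otimes j}\otimes R_W\otimes W^{\otimes(m-2-j)}$. Taking intersections gives $\Krel_m(V)\cong \Krel_m(W)$ compatibly with the deconcatenations $\Delta_{i,j}$. Dualising, $P_m(W)\cong P_m(V)$ as graded algebras, so $\uA_n(W)\cong\uA_n(V)$, and the induced map $\UA_n(W)\to\UA_n(V)$ is a group isomorphism for all $n\ge2$.
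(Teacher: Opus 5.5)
Your proposal is correct and follows essentially the same route as the paper. Both arguments realise a basis of $W$ inductively via \cite[Proposition 3.1]{MMRT}, with each new tame prime $q_i$ split in the field of the earlier classes and with the later classes prescribed trivial at the earlier $q_j$, so that at most one class is locally nonzero at each $q_s$; both then use $\Sha^2(K,\F_p)=0$ to globalise the vanishing of the cup-product relations, and your final paragraph spells out the compatibility of $\Krel_m$, the deconcatenations and $\UA_n$ that the paper leaves as ``functorial''.
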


\begin{proof}
Choose a basis $w_1,\dots,w_d$ of $W$.  
We construct global classes $a_1,\dots,a_d$ inductively, 
together with distinct tame primes $q_1,\dots,q_d$ such that
\begin{enumerate}[label=\textnormal{(\roman*)}]
\item $(a_i)_v = w_i$,
\item $a_i$ is unramified outside $\{v,q_1,\dots,q_i\}$,
\item $(a_i)_{q_j}=0$ for $j<i$,
\item $q_i$ splits completely in the elementary $p$-extension determined by $a_1,\dots,a_{i-1}$.
\end{enumerate}
Suppose $a_1,\dots,a_{i-1}$ have been constructed.  
Then we apply \cite[Proposition 3.1]{MMRT} to the finite set 
$X_i=\{v,q_1,\dots,q_{i-1}\}$, 
with prescribed local value $w_i$ at $v$ and $0$ at each $q_j$, 
and with the finite $p$-extension determined by $a_1,\dots,a_{i-1}$.  
By \cite[Proposition 3.1]{MMRT}, we can find $a_i$ and a new tame prime $q_i$ satisfying (i)--(iv). 
We set $V \coloneqq \langle a_1,\dots,a_d\rangle \subset H^1(G_K,\F_p)$.  
Localisation at $v$ is an isomorphism $\phi \coloneqq \loc_v|_{V} \colon V \xrightarrow{\cong} W$ 
since the $w_i$ are linearly independent. 
We have the inclusion $(\phi \otimes \phi)(R_V) \subseteq R_W$.  
It remains to show that this is an equality. 
We claim that for every pair $i,j$, 
the cup-product $a_i \cup a_j$ is locally zero away from $v$.  
At a prime $q_s$, at most one of the localisations $(a_i)_{q_s}$ is nonzero, 
since earlier classes vanish there because $q_s$ splits in the extension they determine,  
while later classes were constructed to vanish at $q_s$. 
Since $p$ is odd, 
$a_s \cup a_s = 0$ 
by graded commutativity.  
At every other finite place away from $v$, 
all $a_i$ are unramified, and the cup-product of two unramified $p$-characters is zero 
since the maximal unramified pro-$p$ quotient of the corresponding Galois group  has cohomological dimension one.  
Now let 
$r \in V\otimes V$. 
By the above arguments, 
we have $\loc_q(\mu(r)) = 0$ for all primes $q\ne v$. 
At the archimedean places, the localisation is also zero, since $p$ is odd. 
Thus, if $\loc_v(\mu(r)) = 0$, 
then $\mu(r)$ lies in $\Sha^2(K,\Fp)$.  
Lemma \ref{lem:Sha2} then implies $\mu(r) = 0$ in $H^2(G_K,\Fp)$. 
This shows $(\phi \otimes \phi)^{-1}(R_W) \subseteq R_V$. 
Hence, $(\phi \otimes \phi)|_{R_V}  \colon R_V \xrightarrow{\cong} R_W$ is an isomorphism, 
and the global and local kernels agree. 
The assertion for $\UA_n$ follows functorially from the definition.
\end{proof}

%%%%%%%%%%%%%%%

We can now prove the local--global principle for weak $A_n$-formality. 

\begin{theorem}\label{thm:LG-equivalence}
Let $K$ be a number field, let $p$ be odd, assume
$\mu_p\not\subset K$, 
and let $n\ge3$.  
Then the following are equivalent:
\begin{enumerate}[label=\textnormal{(\roman*)}]
\item $\Cb(G_K,\Fp)$ is weakly $A_n$-formal, 
\item for every place $v$ of $K$, 
$\Cb(G_{K_v},\Fp)$ is $A_n$-formal.
\end{enumerate}
\end{theorem}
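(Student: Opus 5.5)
The implication (ii)$\Rightarrow$(i) is exactly Proposition \ref{prop:local-to-global}, so it remains to prove (i)$\Rightarrow$(ii). We fix a place $v$ of $K$ and aim to show that $\Cb(G_{K_v},\Fp)$ is $A_n$-formal. First we dispose of the archimedean places. Since $p$ is odd, $H^i(G_{K_v},\Fp)=0$ for $i\ge1$, so the minimal structure is trivial and formality is automatic. For finite $v$, the local group has $H^i(G_{K_v},\Fp)=0$ for $i\ge3$ and $H^1(G_{K_v},\Fp)$ is finite-dimensional. By Corollary \ref{cor:weak-embedding}, genuine $A_n$-formality of $\Cb(G_{K_v},\Fp)$ is therefore equivalent to solvability of the single finite embedding problem $\mathbf E^{(n)}_{W}$ with $W=H^1(G_{K_v},\Fp)$.

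To solve this local problem, apply Proposition \ref{prop:relation-approx} to $W=H^1(G_{K_v},\Fp)$. This produces a finite-dimensional $V\subset H^1(G_K,\Fp)$ with $\loc_v\colon V\xrightarrow{\cong}W$ and $R_V\cong R_W$, and hence an isomorphism $\UA_n(W)\xrightarrow{\cong}\UA_n(V)$ compatible with the quotients to $W^\vee\cong V^\vee$. By (i) and Corollary \ref{cor:weak-embedding}, the global problem $\mathbf E^{(n)}_V$ has a solution $\rho\colon G_K\to\UA_n(V)$. Its restriction $\rho\circ\iota_v\colon G_{K_v}\to\UA_n(V)$ lifts $\chi_{V,v}$, by commutativity of diagram \eqref{eq:diagram_embedding_problems}. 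Composing with the inverse of $\UA_n(W)\cong\UA_n(V)$ gives a homomorphism $G_{K_v}\to\UA_n(W)$. Since $\chi_{V,v}$ corresponds to $\chi_W$ under $V^\vee\cong W^\vee$, this homomorphism lifts $\chi_W$ and so solves $\mathbf E^{(n)}_W$. Corollary \ref{cor:weak-embedding} then gives $A_n$-formality of $\Cb(G_{K_v},\Fp)$.

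The main point requiring care is the compatibility check in the last step. We need the isomorphism $\UA_n(W)\to\UA_n(V)$ of Proposition \ref{prop:relation-approx} to be the map induced by $\loc_v$, as in diagram \eqref{eq:diagram_embedding_problems}, and to cover the isomorphism $W^\vee\to V^\vee$ dual to $\loc_v|_V$. This is what lets the lift of $\chi_{V,v}$ be transported to a lift of $\chi_W$. It should follow from functoriality of the construction of $K_m$ and $P_m$, together with the equality $K_m(V)\cong K_m(W)$ obtained by intersecting the identified $R$-conditions. The arithmetic input, namely Proposition \ref{prop:relation-approx} with Lemma \ref{lem:Sha2}, has already been established, so no further number theory should be needed.
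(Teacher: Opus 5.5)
Your proposal is correct and follows essentially the same route as the paper. For (ii)$\Rightarrow$(i) you cite Proposition \ref{prop:local-to-global}; for (i)$\Rightarrow$(ii) you dispose of the archimedean places, transport a global solution of $\mathbf E^{(n)}_V$ for the subspace $V$ of Proposition \ref{prop:relation-approx} to a solution of $\mathbf E^{(n)}_W$ with $W=H^1(G_{K_v},\Fp)$, and conclude with Corollary \ref{cor:weak-embedding}. The differences are only cosmetic: you argue directly rather than by contradiction, and you make explicit that $\UA_n(W)\cong\UA_n(V)$ is compatible with the quotients to $W^\vee\cong V^\vee$, which the paper leaves implicit in the word ``natural''.
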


\begin{proof}
Proposition \ref{prop:local-to-global} shows that (ii) implies (i). 
Now we prove that (i) implies (ii). 
At an archimedean place, 
$H^i(G_{K_v},\F_p)=0$ for $i\ge 1$ since $p$ is odd. 
Hence we can assume that $v$ is finite. 
Suppose that $\Cb(G_{K_v},\Fp)$ is not $A_n$-formal.  
Since $v$ is finite, $H^1(G_{K_v},\F_p)$ is finite-dimensional, 
and since $p$ is odd, $H^i(G_{K_v},\F_p) = 0$ for $i \ge 3$.  
By Corollary \ref{cor:weak-embedding} applied with $W=H^1(G_{K_v},\F_p)$, 
the embedding problem 
\[
\mathbf E^{(n)}_W \colon  G_{K_v} \longrightarrow W^\vee \twoheadleftarrow \UA_n(W)  
\]
has no solution.
Then we apply Proposition \ref{prop:relation-approx} with $W=H^1(G_{K_v},\F_p)$
to obtain a finite-dimensional vector subspace $V\subset H^1(G_K,\Fp)$ with
$V \cong W$, $R_V \cong R_W$, and $\UA_n(V) \cong \UA_n(W)$.  
If $\Cb(G_K,\Fp)$ is weakly $A_n$-formal, 
the embedding problem $\mathbf E^{(n)}_V$ has a solution by Corollary \ref{cor:weak-embedding}.  
Restricting the global solution to $G_{K_v}$ 
gives a solution with target $\UA_n(V)$. 
Composing with the inverse of the natural isomorphism $\UA_n(W) \xrightarrow{\cong} \UA_n(V)$ 
from Proposition \ref{prop:relation-approx} gives a solution of $\mathbf E^{(n)}_W$, a contradiction.  
Hence (i) implies (ii).
\end{proof}

\begin{remark}
We note that for the proof of Theorem \ref{thm:LG-equivalence} 
it is crucial that we can lift cohomology classes 
preserving their cup-product relations using Proposition \ref{prop:relation-approx}. 
Lifting a basis of $H^1(G_{K_v},\Fp)$ is not enough, 
since  the global cup-product kernel must coincide with the local one 
to lift the embedding problem from $V$ to $W$. 
\end{remark}

%%%%%%%%%%%

\begin{remark}\label{rem:failure_at_p=2_local-global}
We now explain why our argument for the local-global principle does not apply when $p=2$. 
First, the proof of Proposition \ref{prop:local-to-global} 
uses the lifting theorem of Maire--Min\'a\v c--Ramakrishna--T\^an \cite[Theorem 3.2]{MMRT}, 
and the assumption $p$ odd and $\mu_p \not\subset K$ are crucial for that theorem. 
Second, the proof of Proposition \ref{prop:relation-approx} uses 
$a \cup a = 0$, which is automatic for odd $p$ but false in general in mod-$2$ Galois cohomology. 
For example, $a \cup a = 0$ fails for certain classes of pro-$2$ Demushkin groups, see \cite{PQtwo}. 
Finally, in Sections \ref{sec:weak_vs_genuine_An_general} and \ref{sec:finite-embedding-comparison} 
we assumed $H^i(G,\F_p) = 0$ for $i \ge 3$. 
This assumption is implied by $\cd_p(G) \le 2$. 
This assumption may not be satisfied at $p=2$. 
For example, we have $\cd_2(\Q) >2$. 
Thus, our proof of Theorem \ref{thm:LG-equivalence} 
does not extend to $p=2$.  
Therefore, the fact that $\Cb(G_{K_v},\F_2)$ is $A_3$-formal for every place $v$ by \cite[Theorem 1.2]{PQtwo} 
does not imply that $\Cb(G_K,\F_2)$ is weakly $A_3$-formal.  
In fact, for $n=3$ and $p=2$, 
the analogous local-to-global implication is false in general by Proposition \ref{prop:nonA3-examples}. 
\end{remark}

%%%%%%%%%%%%%%%

\section{Weak \texorpdfstring{$A_3$}{A3}-formality for number fields}\label{sec:number-fields-weak}

From now on, we consider the case $n=3$. 
For the cases where number fields are weakly $A_3$-formal, we provide two proofs. 
%
%%%%%%%%%%%
%
Both proofs use the determination of $A_3$-formality for pro-$p$ Demushkin groups via 
concrete computations in \cite{PQodd}. 
The first proof is, in addition, based on the local--global principle for weak $A_n$-formality.  

\begin{theorem}
\label{thm:numberfield-absolute-trim}
Let $K$ be a number field, let $p$ be an odd prime number, and assume $\mu_p\not\subset K$.  
For $p=3$, we assume that, for every place $v$ of $K$, 
if  $\mu_3 \subset K_v$ then $\mu_9 \subset K_v$. 
Then $\Cb(G_K,\F_p)$ is weakly $A_3$-formal.
\end{theorem}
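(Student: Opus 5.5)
The plan is to deduce the statement from the local--global principle, Theorem \ref{thm:LG-equivalence}, with $n=3$. Since $p$ is odd and $\mu_p\not\subset K$, the implication (ii)$\Rightarrow$(i) of that theorem (that is, Proposition \ref{prop:local-to-global}) shows that it suffices to verify that $\Cb(G_{K_v},\F_p)$ is $A_3$-formal for every place $v$ of $K$. The verification splits into three cases according to the type of the local Galois group.

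\emph{Archimedean places.} Since $p$ is odd, $H^i(G_{K_v},\F_p)=0$ for $i\ge1$. Hence $m_3$ vanishes for degree reasons.

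\emph{Finite places with $\mu_p\not\subset K_v$.} By local Tate duality, $H^2(G_{K_v},\F_p)$ is dual to $H^0(G_{K_v},\mu_p)=0$. So $H^2=0$, and $m_3\colon (H^1)^{\otimes3}\to H^2$ is zero. This covers in particular all places with $v\nmid p$ and $q_v\not\equiv 1 \pmod p$.

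\emph{Finite places with $\mu_p\subset K_v$.} The maximal pro-$p$ quotient $G_{K_v}(p)$ is either a Demushkin group (Labute's theorem) or, in the tame case, has a two-generator presentation of the same type. Inflation $H^\bullet(G_{K_v}(p),\F_p)\to H^\bullet(G_{K_v},\F_p)$ is an isomorphism, since $\mu_p\subset K_v$ and $\cd_p\le2$ (both sides satisfy Poincaré duality in dimension two). I would then check that this inflation is a quasi-isomorphism of cochain dgas, so that $A_3$-formality can be computed on the Demushkin group. There I invoke the computation of \cite{PQodd}: for odd $p$, the canonical $A_3$-class of an odd-prime Demushkin group vanishes, except in the case $p=3$ with invariant $q=3$, i.e.\ $\mu_3\subset K_v$ but $\mu_9\not\subset K_v$ \cite[Theorem 1.2]{PQodd}. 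The hypothesis at $p=3$ excludes exactly this case.

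The main obstacle is this Demushkin step. One has to correctly match the invariant $q$ of $G_{K_v}(p)$, namely the largest $p$-power with $\mu_q\subset K_v$, with the hypothesis of \cite[Theorem 1.2]{PQodd}. One also has to treat the tame places $v\nmid p$ uniformly with the wild ones; for these, $G_{K_v}(p)$ is the two-generator Demushkin group $\langle \sigma,\tau\mid \sigma\tau\sigma^{-1}=\tau^{q_v}\rangle$. I would first check that \cite{PQodd} covers all Demushkin groups with $q\neq2$, including $H^1$ of dimension $2$. If it does not, I would handle the tame case by hand. There $H^1=\langle a,b\rangle$ and $R_W$ is spanned by $a\otimes a$ and $b\otimes b$, so $\Krel_3(W)$ is spanned by $a^{\otimes3}$ and $b^{\otimes3}$. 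The obstruction $\ob_{3,W}$ from Proposition \ref{prop:global-two-step} is then given by Bockstein-type triple products $\langle a,a,a\rangle$ and $\langle b,b,b\rangle$. These can be killed modulo $\im D_{3,W}$ unless $q=3$, using that $\langle x,x,x\rangle$ is governed by the lift of $x$ to $\Z/p^2$ and the $\Z/9$-structure.

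As a cross-check, a second route is available through Pál--Schlank \cite{PS}. Here $N_{3,V}$ is elementary abelian (Remark \ref{rem:kernel_is_elementary_abelian}) with trivial $G_K$-action. The Hasse principle for embedding problems with abelian kernel then reduces the global solvability of $\mathbf E^{(3)}_V$ to local solvability, which is provided by Lemma \ref{lem:local-plans-general}, together with the vanishing of a Sha-type obstruction. That obstruction is controlled by $\Sha^2(K,\F_p)=0$ (Lemma \ref{lem:Sha2}). Finally, Corollary \ref{cor:weak-embedding} gives weak $A_3$-formality.
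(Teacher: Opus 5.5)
Your main line is correct and is essentially the paper's first proof. You reduce, via Proposition~\ref{prop:local-to-global}, to local $A_3$-formality. Archimedean places and finite places with $\mu_p\not\subset K_v$ are trivial because the target $H^2$ of $m_3$ vanishes. At places with $\mu_p\subset K_v$ you pass to the Demushkin quotient $G_{K_v}(p)$ by an inflation quasi-isomorphism and apply \cite[Theorem 1.2]{PQodd}, whose only exception $q=3$ is excluded by the hypothesis at $p=3$. The paper applies that theorem uniformly at tame and wild places, so your tame-case contingency is not needed. Your P\'al--Schlank cross-check corresponds to the paper's second proof.

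Two of your auxiliary claims are wrong, however, and should not be relied on.

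\emph{The tame fallback.} Here $a\cup b=-b\cup a$ spans $H^2$. So $R_W$ is the three-dimensional space spanned by $a\otimes a$, $b\otimes b$ and $a\otimes b+b\otimes a$, not just by $a\otimes a$ and $b\otimes b$. Consequently $\Krel_3(W)$ is the four-dimensional space of $S_3$-invariant tensors. It also contains $a\otimes a\otimes b+a\otimes b\otimes a+b\otimes a\otimes a$ and its analogue with $a,b$ swapped, so the obstruction is not captured by $\langle a,a,a\rangle$ and $\langle b,b,b\rangle$ alone.

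\emph{The cross-check.} The conjugation action on $N_{3,V}$ is not trivial. Conjugating $1+y_2+y_3$ by a lift of $x\in P_1(V)$ gives $1+y_2+y_3+(xy_2-y_2x)$ with $xy_2-y_2x\in P_3(V)$, and this is nonzero in general. For example, take $a\cup b=0$, $x(a)=1$, $x(b)=0$, $y_2(a\otimes b)=1$, $y_2(a\otimes a)=0$, and evaluate on $a\otimes a\otimes b\in\Krel_3(V)$. Hence the Hasse obstruction lies in $\Sha^2(K,M)\cong\Sha^1(K,M')^\vee$ for a nontrivial unipotent module $M$, and Lemma~\ref{lem:Sha2} does not apply directly. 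Nor can you simply d\'evisser along a filtration with trivial graded pieces, since $\Sha^1$ and $\Sha^2$ are not exact functors. This is exactly why the paper's second proof uses Lemma~\ref{lem:cyclotomic-vanishing}. That lemma handles $p$-group actions by passing to $L_0K(\mu_p)$ and using that the prime-to-$p$ part $C$ acts on $M'$ by a nontrivial cyclotomic scalar.
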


\begin{proof}[First proof of Theorem \ref{thm:numberfield-absolute-trim}]
By Proposition \ref{prop:local-to-global}, 
it suffices to show that $\Cb(G_{K_v},\F_p)$ is $A_3$-formal for every place $v$ of $K$. 
At an archimedean place, 
$H^i(G_{K_v},\F_p)=0$ for $i\ge 1$ 
since $p$ is odd.  
%\Gereon{add an argument here!} 
%
Suppose that $K_v$ is nonarchimedean.  
If $\mu_p\not\subset K_v$, local Tate duality gives
$H^2(G_{K_v},\F_p)=0$.   
In both cases, the obstruction target vanishes. 
Hence $\ob_{3,W} = 0$ for every subspace $W \subseteq H^1(G_{K_v},\F_p)$, and in particular  $\ob_{3,H^1} = 0$. 
Proposition \ref{prop:global-two-step} therefore implies that the local cochain algebra is $A_3$-formal. 
If $\mu_p\subset K_v$, the maximal pro-$p$ quotient $G_{K_v}(p)$ of $G_{K_v}$ is a Demushkin group 
by the work of Demushkin \cite{Demushkin_odd} (see also for example \cite[Proposition 7.5.9]{NSW}). 
By the assumption on $K$, the $q$-invariant  of $G_{K_v}(p)$ is 
$q = p^f$ with $f\ge 1$ when $p\ge5$ and $f \ge 2$ when $p=3$, 
and hence $q \ne 3$.   
By \cite[Theorem 1.2]{PQodd}, 
we deduce that $\Cb(G_{K_v}(p),\F_p)$ is $A_3$-formal. 
Since the quotient map $G_{K_v} \to G_{K_v}(p)$ induces a quasi-isomorphism 
$\Cb(G_{K_v}(p),\F_p) \to \Cb(G_{K_v},\F_p)$ (see for example \cite[Proposition 2.2 on page 309]{CMOSU} or \cite[Remark 1.4]{MPQT}),  
this implies that 
$\Cb(G_{K_v},\F_p)$ is $A_3$-formal. 
\end{proof}

%%%%%%%%%%%

For $p=3$, we now show how we can characterise fields which satisfy the assumption of Theorem \ref{thm:numberfield-absolute-trim}.  
\begin{lemma}\label{lemma:criterion_p=3}
Let $K$ be a number field with $\mu_3\not\subset K$.  
The following
conditions are equivalent:
\begin{enumerate}
\item There exists a finite place $v$ of $K$ such that $\mu_3\subset K_v$ and $\mu_9\not\subset K_v$. 
\item One has $K(\mu_3)\neq K(\mu_9)$. 
\end{enumerate}
If these conditions hold, then there are infinitely many such places $v$. 
\end{lemma}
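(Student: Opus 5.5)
The plan is to translate both conditions into statements about the Galois group of $K(\mu_9)/K$ and then apply Chebotarev. Put $L_1=K(\mu_3)$ and $L_2=K(\mu_9)$. Since $\mu_3\not\subset K$, we have $[L_1:K]=2$, and $L_2/K$ is abelian with $\Gal(L_2/K)$ embedding into $(\Z/9)^\times\cong\Z/6$. Thus $[L_2:L_1]\in\{1,3\}$. For a finite place $v$ of $K$ choose a place $w$ of $L_2$ above it; then $\mu_3\subset K_v$ (resp.\ $\mu_9\subset K_v$) if and only if the decomposition group $D_w\subset\Gal(L_2/K)$ is contained in $\Gal(L_2/L_1)$ (resp.\ is trivial). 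This holds because $K_v(\mu_9)$ is the completion of $L_2$ at $w$, and $K_v(\mu_3)$ is the completion of $L_1$ at the place below $w$.

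For (1)$\Rightarrow$(2): suppose (1) holds and take such a $v$. Then $D_w\subseteq\Gal(L_2/L_1)$ and $D_w\neq 1$. In particular $\Gal(L_2/L_1)\neq 1$, that is, $L_1\neq L_2$.

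For (2)$\Rightarrow$(1), and to get infinitely many places: assume $L_1\neq L_2$, so $\Gal(L_2/L_1)\cong\Z/3$. Pick a generator $\sigma$. By Chebotarev there are infinitely many finite places $v$ of $K$, unramified in $L_2$ and prime to $3$, whose Frobenius is $\sigma$. For each such $v$ the decomposition group is $\langle\sigma\rangle=\Gal(L_2/L_1)$, which is nontrivial and contained in $\Gal(L_2/L_1)$. Hence $\mu_3\subset K_v$ but $\mu_9\not\subset K_v$. This proves (2)$\Rightarrow$(1) and also the final assertion: whenever (1) holds, (2) holds, and so there are infinitely many such $v$.

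The only step that needs care is the local translation for places above $3$, where $L_2/K$ may ramify and $D_w$ need not be cyclic. The argument above only uses that the decomposition group identifies $\Gal(K_v(\mu_9)/K_v)$ as a subgroup of $\Gal(L_2/K)$, and this holds for ramified places too. Archimedean places are excluded because (1) only concerns finite places; real places never contain $\mu_3$ anyway.
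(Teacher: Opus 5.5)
Your proof is correct and follows essentially the same route as the paper: (2)$\Rightarrow$(1) and the infinitude claim both come from applying Chebotarev to a generator $\sigma$ of $\Gal(K(\mu_9)/K(\mu_3))\cong\Z/3$, exactly as in the paper. The only difference is cosmetic. You treat both directions uniformly through the decomposition group $D_w\subseteq\Gal(K(\mu_9)/K)$. The paper instead proves (1)$\Rightarrow$(2) contrapositively, by embedding $K(\mu_3)=K(\mu_9)$ into $K_v$, and rules out $\mu_9\subset K_v$ by noting that the Frobenius at an unramified place would then be trivial.
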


\begin{proof}
Let $\zeta_n$ denote a primitive $n$-th root of unity.  
Since
$\zeta_3=\zeta_9^3$, we have $K(\mu_3) \subseteq K(\mu_9)$. 
First suppose that $K(\mu_3) = K(\mu_9)$.   
If $\mu_3\subset K_v$, then the inclusion
of $K$ into $K_v$, together with a primitive third root of unity in $K_v$,
gives a $K$-embedding
$K(\mu_3)\into K_v$.  
Since $K(\mu_3) = K(\mu_9)$, the image of this embedding contains all $9$-th
roots of unity.  
Thus, $\mu_9\subset K_v$.  

Now we assume  $K(\mu_3) \neq K(\mu_9)$.  
Since $X^2+X+1$ has no root in $K$, it is irreducible, 
and we have $[K(\mu_3):K]=2$.  
Moreover, $K(\mu_9)/K(\mu_3)$ is a nontrivial extension of degree dividing $3$, 
and hence $[K(\mu_9):K(\mu_3)] = 3$.  
Choose a nonidentity element
$\sigma\in\Gal(K(\mu_9)/K(\mu_3))$.  
The element $\sigma$ acts trivially on $K(\mu_3)$, 
and hence on $\mu_3$, 
but acts nontrivially on $K(\mu_9)$, and hence on $\mu_9$. 
The extension $K(\mu_9)/K$ is Galois and abelian 
since it is the compositum of $K$ with
the abelian cyclotomic extension $\Q(\zeta_9)/\Q$. 
By Chebotarev's density theorem (see for example \cite[Theorem 9.1.3]{NSW}), 
after excluding the finitely many primes above $3$, there are infinitely many finite places $v$, 
unramified in $K(\mu_9)/K$, with Frobenius $\sigma$.

Because the restriction of $\sigma$ to $K(\mu_3)$ is the identity, 
the Frobenius of $v$ in $K(\mu_3)/K$ is trivial.  
Therefore, $v$ splits completely in $K(\mu_3)/K$.  
For any place $w$ of $K(\mu_3)$ above $v$, this gives
$K(\mu_3)_w=K_v$, 
and hence $\mu_3\subset K_v$.
On the other hand, $\sigma$ is nontrivial in $\Gal(K(\mu_9)/K)$, 
so $v$ does not split completely in $K(\mu_9)/K$.  
If $K_v$ contained $\mu_9$, then it would contain a $K$-isomorphic copy of $K(\mu_9)$. 
At an unramified place this would force the corresponding local degree in $K(\mu_9)/K$ to be one, or
equivalently force $\Frob_v(K(\mu_9)/K)$ to be the identity.  
This contradicts $\Frob_v(K(\mu_9)/K) = \sigma \neq 1$.  
Thus $\mu_9\not\subset K_v$.
\end{proof}

%%%%%%%%%%%

We now give an example of a number field $K$ which satisfies the assumption of Theorem \ref{thm:numberfield-absolute-trim}  for $p=3$. 

\begin{example}\label{example:p=3_weak_class_non_empty}
Let $\zeta_9$ be a primitive $9$-th root of unity, and let $K \coloneqq \Q(\zeta_9 + \zeta_9^{-1})$ 
be the maximal real subfield of $\Q(\zeta_9)$. 
The cyclotomic degree formula implies $[\Q(\zeta_9): \Q ] = 6$ 
and $[\Q(\zeta_9 + \zeta_9^{-1}) : \Q] = 3$.  
Since $K \subset \R$, we have $\mu_3 \not\subset K$. 
However, both extensions $K(\zeta_3)/K$ and $\Q(\zeta_9) /K$ have degree $2$, and $K(\zeta_3) \subseteq \Q(\zeta_9)$. 
Thus, $K(\mu_3) = K(\zeta_3) = \Q(\zeta_9) = K(\mu_9)$. 
By Lemma \ref{lemma:criterion_p=3}, this shows that $K$ 
satisfies the assumption of Theorem \ref{thm:numberfield-absolute-trim} for $p=3$.  
\end{example}

%%%%%%%%%%%

The class of number fields $K$ with $\mu_3 \not\subset K$ 
which have a place $v$ such that $\mu_3 \subset K_v$ 
but $\mu_9 \not\subset K_v$  
is also non-empty. 

\begin{example}\label{example:p=3_not_weak_class_non_empty}
We have $\mu_3 \not\subset \Q$, $\mu_3 \subset \Q_{\ell}$ but $\mu_9 \not\subset \Q_{\ell}$ 
for $\ell = 7, 13, 31, \ldots$. 
\end{example}

%%%%%%%%%%%%%%%%%

Now we consider the remaining case for $p=3$ 
and show that weak $A_3$-formality fails. 
Example \ref{example:p=3_not_weak_class_non_empty} shows that the following theorem applies 
to a nontrivial class of number fields. 

\begin{theorem}
\label{thm:numberfield-absolute-trim_fails_for_p=3}
Let $K$ be a number field, and assume $\mu_3 \not\subset K$. 
Assume that  $K$ has a place $v$ such that $\mu_3 \subset K_v$ 
but $\mu_9 \not\subset K_v$. 
Then $\Cb(G_K,\F_3)$ is not weakly $A_3$-formal. 
\end{theorem}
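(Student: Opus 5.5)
The plan is to apply the local--global principle of Theorem \ref{thm:LG-equivalence} with $p=3$ and $n=3$. The hypotheses match: $p=3$ is odd and $\mu_3\not\subset K$. Direction (i)$\Rightarrow$(ii) of that theorem says that weak $A_3$-formality of $\Cb(G_K,\F_3)$ forces $\Cb(G_{K_v},\F_3)$ to be $A_3$-formal at every place $v$. It therefore suffices to exhibit one place $v$ at which the local cochain algebra is not $A_3$-formal, and the given place with $\mu_3\subset K_v$ and $\mu_9\not\subset K_v$ is the natural candidate.

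The local non-formality comes from the Demushkin computation. The place $v$ is necessarily finite, because archimedean completions are $\R$ or $\C$ and $\mu_3\subset\R$ fails, while for $\C$ we would also have $\mu_9$. Since $\mu_3\subset K_v$, the maximal pro-$3$ quotient $G_{K_v}(3)$ is a Demushkin group by \cite{Demushkin_odd} (cf.\ \cite[Proposition 7.5.9]{NSW}). Its $q$-invariant is the largest $3$-power $q$ with $\mu_q\subset K_v$, and the hypothesis $\mu_9\not\subset K_v$ gives exactly $q=3$. This is the exceptional case of \cite[Theorem 1.2]{PQodd}, where the canonical $A_3$-class is nonzero, so $\Cb(G_{K_v}(3),\F_3)$ is not $A_3$-formal. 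Inflation along $G_{K_v}\to G_{K_v}(3)$ is a quasi-isomorphism of dgas (\cite[Proposition 2.2]{CMOSU}, \cite[Remark 1.4]{MPQT}), and $A_3$-formality is invariant under quasi-isomorphism. Hence $\Cb(G_{K_v},\F_3)$ is not $A_3$-formal either.

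Unwinding this gives a self-contained argument through the proof of Theorem \ref{thm:LG-equivalence}. Take $W=H^1(G_{K_v},\F_3)$. By Corollary \ref{cor:weak-embedding}, $\mathbf E^{(3)}_W$ has no solution. Proposition \ref{prop:relation-approx} then produces a finite-dimensional $V\subset H^1(G_K,\F_3)$ with $\loc_v\colon V\cong W$, $R_V\cong R_W$ and $\UA_3(V)\cong\UA_3(W)$. If $\mathbf E^{(3)}_V$ had a solution, its restriction to $G_{K_v}$ would solve $\mathbf E^{(3)}_W$, a contradiction. So $\mathbf E^{(3)}_V$ is unsolvable, and Corollary \ref{cor:weak-embedding} shows that $\Cb(G_K,\F_3)$ is not weakly $A_3$-formal.

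The main point needing care is the identification $q=3$ and its match with the exceptional case of \cite[Theorem 1.2]{PQodd}, together with the transfer of non-formality from $G_{K_v}(3)$ to $G_{K_v}$. Everything else is a direct citation of earlier results.
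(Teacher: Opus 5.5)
Your proposal is correct and is essentially the paper's proof. You identify $G_{K_v}(3)$ as a Demushkin group with $q=3$, invoke the P\'al--Quick computation together with the inflation quasi-isomorphism to see that $\Cb(G_{K_v},\F_3)$ is not $A_3$-formal, and conclude via direction (i)$\Rightarrow$(ii) of Theorem \ref{thm:LG-equivalence}, whose proof your unwinding paragraph simply reproduces. The differences are cosmetic: the paper cites \cite[Theorem 6.12]{PQodd} for the $q=3$ case, and it leaves implicit your observation that $v$ must be a finite place.
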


\begin{proof}
Let $v$ be a place of $K$ such that $\mu_3 \subset K_v$ 
but $\mu_9 \not\subset K_v$. 
Then the maximal pro-$3$ quotient $G_{K_v}(3)$ of $G_{K_v}$ is a pro-$3$ Demushkin group 
with $q$-invariant $q=3$. 
By \cite[Theorem 6.12]{PQodd}, $\Cb(G_{K_v}(3),\F_3)$ is not $A_3$-formal, 
and hence $\Cb(G_{K_v},\F_3)$ is not $A_3$-formal. 
By Theorem \ref{thm:LG-equivalence}, 
this implies that $\Cb(G_K,\F_3)$ is not weakly $A_3$-formal. 
\end{proof}

%%%%%%%%%%%%

Now we provide a second proof of Theorem \ref{thm:numberfield-absolute-trim} 
which uses a Hasse principle for finite embedding problems of number fields. 
Let $K$ be a number field, 
and let 
$\mathbf E \colon ~ G_K \to A \twoheadleftarrow B$ 
be a finite embedding problem for its absolute Galois group $G_K$. 
We recall that the kernel of $\mathbf E$ 
is defined to be the kernel of $B \twoheadrightarrow A$. 
When $M = \ker(\mathbf E)$ is abelian, conjugation induces an $A$-action on $M$, 
and hence a $G_K$-action through the given homomorphism $G_K \to A$.
We write
$\MD = \Hom(M,\overline K^\times)$ for the Cartier dual. 
For the elementary abelian $p$-kernels below this is simply $\Hom(M,\mu_p)$. 
%

%%%%%%%%%%%%%%

\begin{lemma}\label{lem:cyclotomic-vanishing}
Let $p$ be odd, $\mu_p\not\subset K$, and let $M$ be an elementary
abelian $p$-group whose $G_K$-action has $p$-group image.  
Then $\Sha^1(K,\MD) = 0$.  
\end{lemma}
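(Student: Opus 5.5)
The plan is to reduce to the case of a trivial Galois action after passing to a small field extension. Let $\rho\colon G_K\to \Aut(M)$ be the action, and let $L/K$ be the finite Galois extension cut out by $\ker\rho$. By hypothesis $\Gal(L/K)$ is a $p$-group. Put $L'=L(\mu_p)$ and $K'=K(\mu_p)$. The degree $[K':K]$ divides $p-1$ and is $>1$ because $\mu_p\not\subset K$. Since $[L:K]$ is a power of $p$, $L$ and $K'$ are linearly disjoint over $K$. Hence $\Gal(L'/K)\cong \Gal(L/K)\times\Gal(K'/K)$, a $p$-group times a nontrivial cyclic group $\Delta$ of order prime to $p$. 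The group $G_{L'}$ acts trivially on both $M$ and $\mu_p$, so $\MD|_{G_{L'}}\cong (\Z/p)^r$ as a trivial module.

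\textbf{Step 1: restriction to $L'$.} Let $x\in\Sha^1(K,\MD)$. Its restriction $y\in H^1(G_{L'},\MD)=\Hom(G_{L'},\MD)$ is locally trivial at every place of $L'$, since it is the restriction of a locally trivial class. A homomorphism to a finite abelian group that vanishes on every decomposition group is trivial on all Frobenius elements. By Chebotarev it is therefore zero, exactly as in the proof of Lemma \ref{lem:Sha2}. So $x$ lies in the image of inflation from $H^1(\Gal(L'/K),(\MD)^{G_{L'}})=H^1(\Gal(L'/K),\MD)$.

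\textbf{Step 2: killing the finite-group cohomology.} We show that $H^1(\Gal(L'/K),\MD)$ vanishes, or at least that the relevant classes are forced to vanish. Write $\Gamma=\Gal(L'/K)=P\times\Delta$. Since $|\Delta|$ is prime to $p$, inflation--restriction (Hochschild--Serre) for $P\trianglelefteq\Gamma$ gives
\[
H^1(\Gamma,\MD)\cong H^1(P,(\MD)^{\Delta})\oplus\bigl(\text{terms }H^1(\Delta,\cdot)=0\bigr),
\]
and more precisely $H^1(\Gamma,\MD)=H^1(P,\MD)^{\Delta}$. Now $\Delta$ acts trivially on $M$, since it is the Galois group of $L'/L$ and $G_L$ acts trivially on $M$. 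It acts on $\mu_p$ through a nontrivial character $\omega$ of order prime to $p$. So $\MD=\Hom(M,\mu_p)\cong M^\vee\otimes\omega$, with $\Delta$ acting by the nontrivial scalar character $\omega$. Because $\Delta$ commutes with $P$, its action on $H^1(P,\MD)$ is also multiplication by $\omega$. Taking $\Delta$-invariants of a module on which $\Delta$ acts by a nontrivial scalar character of order prime to $p$ gives zero, since averaging over $\Delta$ kills it. Hence $H^1(\Gamma,\MD)=0$, and so $x=0$.

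\textbf{Main obstacle.} The delicate point is Step 2, specifically checking that $\Delta$ genuinely acts through the scalar $\omega$ on all of $H^1(P,\MD)$. This requires the product decomposition $\Gamma=P\times\Delta$, and so the linear disjointness of $L$ and $K(\mu_p)$. That disjointness holds because their degrees over $K$ are coprime, and the intersection is Galois over $K$. If $P$ and $\Delta$ did not commute, I would instead use the coprime-order splitting to identify $H^1(\Gamma,\MD)$ with the $\Delta$-invariants via the transfer, and then repeat the scalar-character argument.
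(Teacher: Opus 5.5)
Your proof is correct and follows the paper's argument. You pass to $L(\mu_p)$ with Galois group $P\times\Delta$, kill the restricted class by Chebotarev, and show $H^1(P\times\Delta,M')=0$ using that $\Delta$ acts trivially on $M$ and by a nontrivial cyclotomic scalar on $M'$. The only cosmetic difference is in the last step: the paper runs Hochschild--Serre with the prime-to-$p$ factor $\Delta$ as the normal subgroup, so all $H^j(\Delta,M')$ vanish, whereas you take $\Delta$-invariants of $H^1(P,M')$. Both work because the product is direct.
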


\begin{proof}
Choose a finite Galois $p$-extension $L_0/K$ killing the action on $M$ and
put $L = L_0K(\mu_p)$.  
The two extensions have relatively prime degrees,
which implies 
$\Gal(L/K)=P\times C$,  
where $P$ is a $p$-group and
$1\ne C\subset \F_p^\times$.  
The group $C$ acts trivially on $M$. 
Indeed, the image of $G_K$ in $\Aut(M)$ is a $p$-group, 
and $C$ has order prime to $p$. 
Hence the restriction of the action to $C$ is trivial. 
Then $C$ acts on $\MD$ by a nontrivial cyclotomic scalar. 
This gives $(\MD)^C=0$ and, since $|C|$ is invertible in $\Fp$, 
$H^j(C,\MD)=0$ for all $j\ge0$.  
The Hochschild--Serre spectral sequence gives
\begin{align}\label{eq:14.2_4.5}
H^1(\Gal(L/K),\MD)=0.
\end{align}
If $\alpha\in\Sha^1(K,\MD)$, then over $L$ the coefficient module is
trivial and
$\res_L(\alpha)$ is a continuous homomorphism $G_L\to \MD$ trivial
on every decomposition group.  
A nonzero such homomorphism would cut out a
nontrivial finite abelian extension of $L$ with a prime having nontrivial
Frobenius, by Chebotarev, 
contradicting local triviality.  
Thus
$\res_L(\alpha)=0$, and inflation--restriction together with \eqref{eq:14.2_4.5}  
gives $\alpha=0$.
\end{proof}

%%%%%%%%%

\begin{theorem}
\label{thm:direct-hasse}
Let $p$ be odd, $\mu_p\not\subset K$, and 
let $\mathbf E$ be a finite embedding problem whose kernel $M$ is elementary abelian of $p$-power order
and whose induced $G_K$-action has $p$-group image. 
If $\mathbf E$ is solvable over every completion of $K$, then it is solvable over $K$.
\end{theorem}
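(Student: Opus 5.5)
We plan to reduce Theorem \ref{thm:direct-hasse} to the P\'al--Schlank Hasse principle for finite embedding problems with abelian kernel \cite{PS}. That principle says the following: for a finite embedding problem $\mathbf E\colon G_K\to A\twoheadleftarrow B$ with abelian kernel $M$ that is solvable over every completion $K_v$, the remaining obstruction to global solvability is controlled by the Tate--Shafarevich group $\Sha^1(K,\MD)$ of the Cartier dual. In particular, if $\Sha^1(K,\MD)=0$, local solvability everywhere implies global solvability. Granting this, the proof is short, because Lemma \ref{lem:cyclotomic-vanishing} gives exactly the needed vanishing.

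The steps are as follows.

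\begin{enumerate}
\item Since $M$ is abelian, conjugation in $B$ makes $M$ an $A$-module, and hence a finite $G_K$-module via $G_K\to A$. Its Cartier dual is $\MD=\Hom(M,\mu_p)$, because $M$ is killed by $p$.
\item By hypothesis the image of $G_K$ in $\Aut(M)$ is a $p$-group, and $p$ is odd with $\mu_p\not\subset K$. Lemma \ref{lem:cyclotomic-vanishing} therefore gives $\Sha^1(K,\MD)=0$.
\item Local solvability at every place, together with \cite{PS}, then yields a global solution.
\end{enumerate}

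If one prefers not to cite the full strength of \cite{PS}, we would argue via Poitou--Tate directly, using the standard mechanism for abelian kernels. The set of solutions lifting the given $\phi\colon G_K\to A$ corresponds to trivialisations of the class $\epsilon=\phi^*[B]\in H^2(G_K,M)$ of the pulled-back extension. Local solvability at $v$ means that $\loc_v(\epsilon)=0$ for every $v$, so $\epsilon\in\Sha^2(K,M)$. Poitou--Tate duality \cite[Theorem 8.6.7]{NSW} identifies $\Sha^2(K,M)$ with the dual of $\Sha^1(K,\MD)$, which vanishes by Lemma \ref{lem:cyclotomic-vanishing}. Hence $\epsilon=0$, and there is a continuous lift $G_K\to B$. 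It is a solution in the weak sense used in this paper, so surjectivity is not required.

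The main obstacle is the bookkeeping in identifying the obstruction to a weak solution with the pullback class $\epsilon$ in $H^2(G_K,M)$, with the correct twisted module structure. We also need to check that "solvable over $K_v$" means exactly $\loc_v(\epsilon)=0$. Both facts are standard for split-free extensions with abelian kernel: a lift exists if and only if the pulled-back extension $G_K\times_A B\to G_K$ splits. At archimedean places nothing beyond the hypothesis is needed, since it already includes solvability there.
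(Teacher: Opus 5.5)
Your proposal is correct and follows the paper's proof. The paper likewise cites \cite[Theorem 1.1 and Proposition 2.7]{PS} to reduce the question to the vanishing of $\Sha^1(K,\MD)$, and then invokes Lemma \ref{lem:cyclotomic-vanishing}; your fallback argument, which places the pulled-back class $\epsilon$ in $\Sha^2(K,M)$ and applies Poitou--Tate duality, is also valid for weak solutions with abelian kernel and simply makes explicit what \cite{PS} supplies in this case.
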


\begin{proof}
By \cite[Theorem 1.1 and Proposition 2.7]{PS}, 
the only obstruction to the lifting of the solutions of the local embedding problems is $\Sha^1(K,\MD)$, 
which vanishes by Lemma \ref{lem:cyclotomic-vanishing}.  
\end{proof}

Now we can prove Theorem \ref{thm:numberfield-absolute-trim} again 
using Theorem \ref{thm:direct-hasse}. 

%%%%%%%%%%%%%

\begin{proof}[Second proof of Theorem \ref{thm:numberfield-absolute-trim}]
Let $V \subset H^1(G_K,\F_p)$ be a finite-dimensional vector subspace. 
For every place $v$ of $K$, the local $A_3$-formality established in the first proof, 
together with Lemma \ref{lem:local-plans-general}, shows that the localised embedding problem 
$\mathbf E^{(3)}_{V,v}$ is solvable. 
The kernel of $\UA_3(V) \twoheadrightarrow V^\vee$ 
is elementary abelian by Remark \ref{rem:kernel_is_elementary_abelian}, 
and its induced $G_K$-action factors through the finite $p$-group $V^\vee$.  
Theorem \ref{thm:direct-hasse}, applied to $\mathbf E^{(3)}_V$, 
therefore gives a global solution.  
Since $V$ was arbitrary, Proposition \ref{prop:two-step-comparison_finite_embedding} 
implies that $\Cb(G_{K},\Fp)$ is weakly $A_3$-formal. 
\end{proof}

%%%%%%%%%%%

\begin{remark}\label{rem:failure_at_p=2_weak_A_3}
We now explain why our argument to determine weak $A_3$-formality of $\Cb(G_K,\F_p)$ does not apply when $p=2$. 
The first proof of Theorem \ref{thm:numberfield-absolute-trim} uses the local-global principle of Theorem \ref{thm:LG-equivalence}. 
As we explained in Remark \ref{rem:failure_at_p=2_local-global}, 
our proof of Proposition \ref{prop:local-to-global} does not apply when $p=2$ for several reasons. 
The second proof of Theorem \ref{thm:numberfield-absolute-trim} uses the local-global principle for the existence 
of solutions of embedding problems for number fields of \cite{PS}. 
The key point is then that $\Sha^1(K,\MD)$ is trivial. 
However, the proof of Lemma \ref{lem:cyclotomic-vanishing} does not apply when $p=2$ 
since $\mu_2 \subset K$. % which implies that 
Hence the vanishing argument used in Lemma \ref{lem:cyclotomic-vanishing}, 
and therefore this proof of the abelian local-global principle, does not extend to $p=2$. 
\end{remark}

%%%%%%%%%%%%%%%%
%
%%%%%%%%%%%%%%%%

\section{Failure of genuine \texorpdfstring{$A_3$}{A3}-formality for number fields}\label{sec:nonformality}

We now prove that, under the cyclotomic hypothesis $\mu_p\not\subset K$, 
the Galois cochain algebra is never genuinely $A_3$-formal. 
The key arithmetic idea is a very particular prescribed ramification theorem for cyclic degree-$p$ extensions, 
due to a result of Gras \cite[Chapter V, Corollary 2.4.2]{GrasCFT}. 
Our use of this result is inspired by the work of Maire--Min\'a\v c--Ramakrishna--T\^an in \cite{MMRT}.  
However, as explained in the introduction, we use the theorem in a different way.  

\begin{notn}
Throughout this section $p$ is odd and $\mu_p\not\subset K$.  
We write $K' \coloneqq K(\mu_p)$, and $\Delta \coloneqq \Gal(K'/K)$. 
The mod-$p$ cyclotomic character embeds $\Delta$ as a nontrivial subgroup
of $\F_p^\times$.  
By a \emph{$p$-extension} of $K$ we mean a
finite Galois extension whose Galois group is a $p$-group.
\end{notn}

%
%%%%%%%%%%%%
%
%%%%%%%%%
%
We recall the following terminology. 

\begin{definition}
\label{def:governing-field}
Let $T$ be a finite set of finite primes of $K$. 
Set 
\[
\mathcal V^T
\coloneqq 
\left\{ x\in K^\times: \operatorname{ord}_{\qq}(x) \equiv0 \pmod p  \text{ for every } \qq \notin T  \right\}.
\]
The
\emph{governing field} attached to $(K,T)$ is defined by adjoining to $K(\mu_p)$ 
the $p$th roots of representatives of
the image of $\mathcal V^T$ in $K'^\times/(K'^\times)^p$, i.e., 
\[
\operatorname{Gov}_{K,T} \coloneqq K'\bigl(\sqrt[p]{\mathcal V^T}\bigr).
\]
\end{definition}

%%%%%%%%%%%%%%

\begin{remark}
The group $\mathcal V^T$ is the group denoted $V^T$ in \cite[Section 2.1]{MMRT}.  
Since the notation conflicts with
our notation $V\subset H^1(G_K,\F_p)$, 
we use the symbol $\mathcal V^T$. 
The quotient of $\mathcal V^T$ relevant to Kummer theory is finite: it is
controlled by the $T$-unit group and the $p$-torsion in the ideal class
group.  Thus $\operatorname{Gov}_{K,T}/K'$ is a finite extension.
\end{remark}

%%%%%%%%%%%%%%%%%%

\begin{lemma}
\label{lem:governing-structure}
The extension $\operatorname{Gov}_{K,T}/K'$ is finite elementary
abelian of exponent $p$, and $\operatorname{Gov}_{K,T}/K$ is Galois.
Moreover, 
\begin{enumerate}[label=\textnormal{(\roman*)}]
\item $\operatorname{Gov}_{K,T}/K'$ is unramified outside the primes of
$K'$ above $T$ and $p$,
\item conjugation by $\Delta$ on
$\Gal(\operatorname{Gov}_{K,T}/K')$ acts via multiplication by a scalar through the
cyclotomic character, and in particular has no nonzero trivial subquotient.
\end{enumerate}
\end{lemma}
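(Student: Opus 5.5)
The plan is to identify $\Gov_{K,T}$ with a Kummer extension of $K'$ and read off all the assertions from Kummer theory for $K'/K$. Let $B\subset K'^\times/(K'^\times)^p$ be the subgroup generated by the image of $\mathcal V^T$. First we check that $B$ is finite. The image of $\mathcal V^T$ in $K^\times/K^{\times p}$ sits in an exact sequence with the $T$-units modulo $p$-th powers on one side and the $p$-torsion of the $T$-class group on the other, as noted in the preceding remark, and both are finite. By Kummer theory over $K'\ni\mu_p$, $\Gov_{K,T}=K'(\sqrt[p]{B})$ is then a finite abelian extension of exponent $p$ with
\[
\Gal(\Gov_{K,T}/K')\cong\Hom(B,\mu_p).
\]

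For the Galois property, $B$ is generated by elements of $K$, so $B$ is stable under $\Delta$ (indeed fixed pointwise). Hence for any $\sigma\in G_K$ the field $\sigma(\Gov_{K,T})=K'(\sqrt[p]{\sigma B})$ equals $\Gov_{K,T}$. Since $K'/K$ is Galois, this shows that $\Gov_{K,T}/K$ is Galois.

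For (i), let $\mathfrak Q$ be a prime of $K'$ lying neither above $T$ nor above $p$, and let $x\in\mathcal V^T$. Then $\operatorname{ord}_{\mathfrak q}(x)\equiv0\pmod p$ for the prime $\mathfrak q$ of $K$ below $\mathfrak Q$. Since $K'/K$ is unramified at $\mathfrak q\nmid p$, we also get $\operatorname{ord}_{\mathfrak Q}(x)\equiv 0\pmod p$. Multiplying by a suitable $p$-th power of a local uniformiser, we may assume $x$ is a local unit at $\mathfrak Q$. Then $X^p-x$ is separable modulo $\mathfrak Q$ because $\mathfrak Q\nmid p$, so $K'(\sqrt[p]{x})$ is unramified at $\mathfrak Q$, and hence so is the compositum.

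For (ii), the Kummer pairing $\Gal(\Gov_{K,T}/K')\times B\to\mu_p$, $(\tau,b)\mapsto \tau(\beta)/\beta$ with $\beta^p=b$, is $G_K$-equivariant, where $G_K$ acts on $\Gal(\Gov_{K,T}/K')$ by conjugation. Concretely, for $\tilde\delta$ a lift of $\delta\in\Delta$ we have
\[
\langle \tilde\delta\tau\tilde\delta^{-1},\delta b\rangle=\delta\langle\tau,b\rangle .
\]
Since $\delta b=b$ and $\delta$ acts on $\mu_p$ by $\zeta\mapsto\zeta^{\omega(\delta)}$ with $\omega$ the cyclotomic character, nondegeneracy gives $\tilde\delta\tau\tilde\delta^{-1}=\tau^{\omega(\delta)}$. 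Conjugation by an element of $\Gal(\Gov_{K,T}/K')$ is trivial because this group is abelian, so the action is well defined on $\Delta$ and is scalar multiplication by $\omega(\delta)$.

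Because $\omega$ is injective on $\Delta\neq1$, some $\delta$ acts as a scalar $\neq1$. A scalar acts by the same scalar on every subquotient, so no nonzero subquotient is trivial.

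The main point requiring care is the finiteness of $B$ and the equivariance sign in (ii); everything else is standard Kummer theory. I expect the argument to be short, with finiteness taken from the preceding remark.
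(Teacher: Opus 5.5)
Your proof is correct. The paper's proof is essentially by reference. It says the facts are listed after the definition in \cite[Section 2.1]{MMRT}, that (i) follows directly from the valuation condition defining $\mathcal V^T$ (this is your local computation at $\mathfrak Q\nmid p$), and that (ii) follows from \cite[Chapter I, Theorem 6.2]{GrasCFT}. You instead derive everything explicitly from Kummer theory:
\begin{enumerate}[label=\textnormal{(\alph*)}]
\item finiteness comes from the exact sequence relating the image of $\mathcal V^T$ in $K^\times/K^{\times p}$ to the $T$-units modulo $p$-th powers and the $p$-torsion of the $T$-class group;
\item the Galois property over $K$ comes from the radical consisting of classes of elements of $K$, hence being $G_K$-stable;
\item the scalar action in (ii) comes from the $G_K$-equivariance of the Kummer pairing: since $\Delta$ fixes the radical, this forces $\tilde\delta\tau\tilde\delta^{-1}=\tau^{\omega(\delta)}$.
\end{enumerate}
This is the standard Kummer-theoretic mechanism behind the cited results, so nothing is lost. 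Your version is self-contained and shows exactly where $\mu_p\not\subset K$ enters the ``no nonzero trivial subquotient'' clause: $\Delta\neq1$ and $\omega$ is injective on $\Delta$. A cosmetic remark: in (i) you do not need $K'/K$ to be unramified at $\mathfrak q$, since $\operatorname{ord}_{\mathfrak Q}(x)=e\cdot\operatorname{ord}_{\mathfrak q}(x)$ is divisible by $p$ whatever the ramification index $e$ is.
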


\begin{proof}
These facts are listed immediately after the definition of
$\operatorname{Gov}_{K,T}$ in \cite[Section 2.1]{MMRT}. 
Assertion (i) 
follows directly from the valuation condition defining $\mathcal V^T$.   
Assertion (ii) follows from \cite[Chapter I, Theorem 6.2]{GrasCFT}. 
\end{proof}

%%%%%%%%%%%%

%
Let $q\notin T$ be a finite prime with $q\nmid p$.  
Choose a prime $\mathfrak q'$ of $K'$ above $q$.  
The extension $\operatorname{Gov}_{K,T}/K'$ is unramified at $\mathfrak q'$, so it has
a Frobenius element
\[
\sigma_{q,\mathfrak q'}
=\operatorname{Frob}_{\mathfrak q'}
\in \Gal(\operatorname{Gov}_{K,T}/K').
\]
Changing $\mathfrak q'$ changes this element by multiplication by a unit in $\F_p^\times$, 
see \cite[Remark 2.1]{MMRT}.  
We therefore just write $\sigma_q$. 
We recall the following theorem from \cite[Chapter V, Corollary 2.4.2]{GrasCFT}  
(see also \cite[Theorem 1.1]{HMRGras} and \cite[Theorem 2.1]{MMRT}). 

\begin{theorem}[Gras]
\label{thm:gras-condition} 
Let $T$ be a finite set of finite primes of $K$, and let
$S=\{\qq_1,\dots,\qq_s\}$ 
be a nonempty finite set of primes disjoint from $T$ with norm $N\qq_i \equiv 1 \pmod p$ for every $i$.  
There exists a cyclic extension $L/K$ of degree $p$ which is
ramified at exactly the primes of $S$ and in which every prime of $T$
splits completely if and only if 
there exist $a_i \in \F_p^\times$, $i = 1, \ldots, s$, 
such that 
\begin{align}\label{eq:gras-condition}
\sum_{i=1}^s a_i\sigma_{q_i}=0
~ \text{in} ~ \Gal(\operatorname{Gov}_{K,T}/K'). 
\end{align}
\end{theorem}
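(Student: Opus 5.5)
The plan is to derive Theorem \ref{thm:gras-condition} from the Poitou--Tate exact sequence for Selmer groups with local conditions, in the Greenberg--Wiles style. Kummer theory over $K'$ then identifies the dual Selmer group with the Kummer group defining $\operatorname{Gov}_{K,T}$.

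\textbf{Step 1 (local conditions).} A cyclic degree-$p$ extension $L/K$ is the same as a nonzero class $\chi\in H^1(G_K,\F_p)$, up to $\F_p^\times$.

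Set $\Sigma=S\cup T\cup\{v\mid p\}\cup\{v\mid\infty\}$. Define a Selmer group $H^1_{\mathcal L}(K,\F_p)$ by the following conditions:
\begin{itemize}
\item at $\qq\in T$, $\chi_\qq=0$;
\item at $v\mid p$ and at all $v\notin\Sigma$, $\chi_v$ is unramified;
\item at $\qq_i\in S$, $\chi_{\qq_i}$ is unrestricted.
\end{itemize}
The archimedean conditions are vacuous since $p$ is odd. If $T$ contains primes above $p$, the stronger split condition there is imposed instead.

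Since $N\qq_i\equiv1\pmod p$ and $\qq_i\nmid p$, the inertia quotient $H^1(I_{\qq_i},\F_p)^{G}$ is one-dimensional, generated by the tame character. The desired extension exists if and only if the map
\[
\phi\colon H^1_{\mathcal L}(K,\F_p)\longrightarrow \bigoplus_{i=1}^s H^1(I_{\qq_i},\F_p)\cong\F_p^s
\]
has a vector with all coordinates nonzero in its image.

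\textbf{Step 2 (dual description of $\im\phi$).} Let $\mathcal L^0$ be the same conditions but with $\chi_{\qq_i}$ required to be unramified. Poitou--Tate duality (e.g.\ \cite[Theorem 8.6.7]{NSW}) gives the exact sequence
\[
0\to H^1_{\mathcal L^0}\to H^1_{\mathcal L}\to\bigoplus_i H^1(K_{\qq_i},\F_p)/H^1_{\mathrm{ur}}\to H^1_{\mathcal L^{0,\perp}}(K,\mu_p)^\vee .
\]
Under local Tate duality, $\im\phi$ is therefore the annihilator of the image of the dual Selmer group $H^1_{\mathcal L^{0,\perp}}(K,\mu_p)$ in $\bigoplus_i H^1_{\mathrm{ur}}(K_{\qq_i},\mu_p)$.

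The dual conditions are as follows:
\begin{itemize}
\item at $T$, no condition;
\item at $\qq_i$ and at $v\notin \Sigma$, unramified;
\item at $v\mid p$ not in $T$, the annihilator of the unramified classes.
\end{itemize}
Passing to $K'$, using $|\Delta|$ prime to $p$, and taking the appropriate $\Delta$-eigenspace via Lemma \ref{lem:governing-structure}(ii), I would identify this dual Selmer group with the image of $\mathcal V^T$ in $K'^\times/(K'^\times)^p$. The key check is that unramifiedness outside $T$ corresponds exactly to $\operatorname{ord}_\qq(x)\equiv0\pmod p$. Handling the conditions above $p$ here is the main obstacle; this is the point where the precise form of $\mathcal V^T$ in \cite[Section 2.1]{MMRT} must be matched.

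\textbf{Step 3 (Frobenius interpretation).} For $x$ in this Kummer group, the local pairing of $x$ with the tame character at $\qq_i$ vanishes exactly when $x$ is a $p$-th power in $K_{\qq_i}$. By Kummer theory this is measured by the Kummer pairing $\langle x,\sigma_{q_i}\rangle$ of $x$ against $\operatorname{Frob}_{\qq_i'}\in\Gal(\operatorname{Gov}_{K,T}/K')$, up to the $\F_p^\times$-ambiguity of $\sigma_{q_i}$.

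Hence $b=(b_i)\in\im\phi$ if and only if $\sum_i b_i\langle x,\sigma_{q_i}\rangle=0$ for all $x\in\mathcal V^T$. By nondegeneracy of the Kummer pairing between $\mathcal V^T$ and $\Gal(\operatorname{Gov}_{K,T}/K')$, this is equivalent to
\[
\sum_i b_i\sigma_{q_i}=0 .
\]
A $b$ with all $b_i\ne0$ is then exactly a solution $a_i=b_i\in\F_p^\times$ of \eqref{eq:gras-condition}. The corresponding $\chi$ is ramified exactly at $S$ and split at $T$, which proves the equivalence.
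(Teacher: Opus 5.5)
The paper does not prove this statement. It imports it as a black box from Gras's class field theory book (Chapter~V, Corollary~2.4.2), where it comes out of ray class groups with $T$-splitting conditions. For the formulation in terms of $\Gov_{K,T}$ it refers to Hajir--Maire--Ramakrishna and to Maire--Min\'a\v c--Ramakrishna--T\^an. Your proposal is correct, and it is essentially the Poitou--Tate (Greenberg--Wiles style) proof of Hajir--Maire--Ramakrishna. You realise the extensions as a Selmer group. The sequence for $\mathcal L^0\subset\mathcal L$ identifies the image of the inertia map at $S$ with the annihilator of the dual Selmer group. You recognise that dual Selmer group as $\mathcal V^T/K^{\times p}$ and turn the local pairings at $S$ into Kummer pairings with Frobenius elements. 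Compared with the bare citation, this makes visible why $\mathcal V^T$ is cut out by a valuation condition outside $T$ only. It also shows why the criterion asks for all $a_i\neq 0$: the coordinate $b_i$ is exactly the ramification of $\chi$ at $\qq_i$.

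Three points should be tightened.

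\emph{The step above $p$.} The step you flag as the main obstacle is routine. For $v\mid p$ with $v\notin T$, local reciprocity maps $U_v$ onto the inertia subgroup of $G_{K_v}^{\mathrm{ab}}$. So an unramified $\chi$ satisfies $\chi(\mathrm{rec}(x))=\operatorname{ord}_v(x)\,\chi(\mathrm{Frob}_v)$. Hence the annihilator of the unramified line in $H^1(G_{K_v},\mu_p)\cong K_v^\times/K_v^{\times p}$ is $U_vK_v^{\times p}/K_v^{\times p}$, i.e.\ the condition $\operatorname{ord}_v(x)\equiv 0\pmod p$. This is the same condition as at tame primes, which is why $\mathcal V^T$ imposes nothing special above $p$.

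\emph{The eigenspace detour.} No $\Delta$-eigenspace argument is needed to identify the dual Selmer group. Hilbert~90 gives $H^1(G_K,\mu_p)\cong K^\times/K^{\times p}$ directly, so the dual Selmer group is literally $\mathcal V^T/K^{\times p}$. The field $K'$ enters only through the Kummer pairing with $\Gal(\Gov_{K,T}/K')$. Its nondegeneracy uses that $K^\times/K^{\times p}\to K'^\times/K'^{\times p}$ is injective, since $[K':K]$ is prime to $p$.

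\emph{The constants in Step~3.} Consider the two maps $x\mapsto\langle \loc_{\qq_i}x,t_i\rangle$ and $x\mapsto\langle x,\sigma_{q_i}\rangle$. Both factor through the line $H^1_{\mathrm{ur}}(G_{K_{\qq_i}},\mu_p)$ and are nonzero on it; this uses $K'_{\qq_i'}=K_{\qq_i}$, which holds because $N\qq_i\equiv1\pmod p$. So they differ by a constant $c_i\in\F_p^\times$ independent of $x$. The resulting relation therefore has $a_i=c_ib_i$ rather than $a_i=b_i$, which does not affect whether all coefficients are nonzero.
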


%%%%%%%%%%%%%%%%%%

\begin{remark}
The important point in \eqref{eq:gras-condition} is not merely linear dependence but 
every coefficient $a_i$ must be nonzero.  
A relation supported on a proper subset of $S$ corresponds to
allowing the cyclic extension to be ramified on that smaller subset.
Thus, for our purposes, 
the theorem converts the problem  
of constructing a cyclic $p$-extension ramified exactly at $S$ and split at $T$ 
into a finite-dimensional linear condition on Frobenius elements in one
fixed Kummer extension.  
A new proof generalising the 
tame Gras--Munnier theorem is given by Hajir--Maire--Ramakrishna 
in \cite[Theorem 1.1]{HMRGras}, 
which also provides the formulation of the criterion we use 
in Theorem \ref{thm:gras-condition} (see also \cite[Theorem 2.1]{MMRT}).  
\end{remark}

For our argument below, $S$ consists of a single prime.  

\begin{corollary}
\label{cor:gras-one-prime}
Let $v\notin T$ be a finite prime with $v\nmid p$.  
Suppose that $v$ splits completely in
$\Gov_{K,T}/K$.  
Then there exists a cyclic extension
$L/K$ of degree $p$ which is ramified exactly at $v$ and in which every
prime of $T$ splits completely. 
\end{corollary}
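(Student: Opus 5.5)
We apply Theorem \ref{thm:gras-condition} with $S=\{v\}$, so that $s=1$ and the criterion \eqref{eq:gras-condition} asks for some $a_1\in\F_p^\times$ with $a_1\sigma_v=0$ in $\Gal(\Gov_{K,T}/K')$. The plan is to show that $\sigma_v=0$ itself, and then take $a_1=1$. Two hypotheses of Theorem \ref{thm:gras-condition} must be checked first. The set $S$ is nonempty and disjoint from $T$ because $v\notin T$. We also need $N v\equiv 1\pmod p$. Since $v$ splits completely in $\Gov_{K,T}/K$, and $K'\subseteq\Gov_{K,T}$, the prime $v$ splits completely in $K'=K(\mu_p)/K$. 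As $v\nmid p$, the extension $K'/K$ is unramified at $v$, and its Frobenius at $v$ acts on $\mu_p$ by $\zeta\mapsto\zeta^{Nv}$. Complete splitting makes this Frobenius trivial, so $\zeta^{Nv}=\zeta$ for all $\zeta\in\mu_p$, which gives $Nv\equiv1\pmod p$.

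Next, $\sigma_v$ is defined. Lemma \ref{lem:governing-structure}(i) says $\Gov_{K,T}/K'$ is unramified outside $T$ and $p$; since $v\notin T$ and $v\nmid p$, every prime $\mathfrak q'$ of $K'$ above $v$ is unramified in $\Gov_{K,T}$, so $\sigma_v=\Frob_{\mathfrak q'}$ makes sense. Because $v$ splits completely in $\Gov_{K,T}/K$, it splits completely in the subextension $\Gov_{K,T}/K'$ at every prime above it. The decomposition group of $\mathfrak q'$ in $\Gal(\Gov_{K,T}/K')$ is therefore trivial, and hence $\sigma_v=\Frob_{\mathfrak q'}=0$ (writing the elementary abelian group additively).

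With $\sigma_v=0$, the relation $1\cdot\sigma_v=0$ holds with the nonzero coefficient $a_1=1$, so \eqref{eq:gras-condition} is satisfied. Theorem \ref{thm:gras-condition} then gives a cyclic extension $L/K$ of degree $p$ ramified exactly at $v$, in which every prime of $T$ splits completely. The argument is essentially bookkeeping, and I do not expect a genuine obstacle. The only points needing care are the verification $Nv\equiv1\pmod p$ and the observation that complete splitting over $K$ passes to the intermediate extension $\Gov_{K,T}/K'$.
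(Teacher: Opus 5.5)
Your proof is correct and follows the paper's argument. You apply the singleton case $S=\{v\}$ of Gras's criterion, observe that complete splitting of $v$ in $\Gov_{K,T}/K$ forces $\sigma_v=0$, and obtain $Nv\equiv 1 \pmod p$ from $K'\subseteq \Gov_{K,T}$; you also spell out the passage of complete splitting to the intermediate extension $\Gov_{K,T}/K'$, which the paper treats as immediate.
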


\begin{proof}
For $S=\{v\}$, condition \eqref{eq:gras-condition} is
$a\sigma_v=0$ with $a \in \F_p^\times$, 
hence simply $\sigma_v=0$, 
i.e., $v$ splits completely in
$\Gov_{K,T}/K$. 
This implies $Nv \equiv 1 \pmod p$ 
since $\Gov_{K,T}$ contains $K'=K(\mu_p)$. 
Thus, all hypotheses of Theorem  \ref{thm:gras-condition} are satisfied, 
and the assertion is the singleton case of the theorem. 
\end{proof}

%%%%%%%%%%%%%%%%%%

For the application of Corollary \ref{cor:gras-one-prime} in Proposition \ref{prop:fresh-ramified-character}, 
we need the following lemma. 

\begin{lemma}
\label{lem:governing-disjointness}
Let $F/K$ be a finite Galois $p$-extension.  
Then $FK'$ and $\operatorname{Gov}_{K,T}$ are linearly disjoint over $K'$, i.e., 
$FK'\cap\operatorname{Gov}_{K,T}=K'$. 
\end{lemma}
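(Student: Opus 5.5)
Set $E \coloneqq FK' \cap \Gov_{K,T}$. The plan is to show that $\Gal(E/K')$ is a common quotient of two Galois groups on which $\Delta$ acts in incompatible ways, and so must be trivial.

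First, $E/K$ is Galois, being the intersection of two Galois extensions of $K$: $FK'$ is the compositum of the Galois extensions $F$ and $K'$, and $\Gov_{K,T}/K$ is Galois by Lemma \ref{lem:governing-structure}. Since $[F:K]$ is a power of $p$ and $[K':K]$ divides $p-1$, the fields $F$ and $K'$ are linearly disjoint over $K$. Hence
\[
\Gal(FK'/K) \cong \Gal(F/K) \times \Delta ,
\]
and restriction identifies $\Gal(FK'/K')$ with $\Gal(F/K)$, compatibly with the conjugation action of $\Delta$. In this direct product, conjugation by a lift of $\delta \in \Delta$ acts trivially on the factor $\Gal(F/K)$. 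It therefore acts trivially on $\Gal(FK'/K')$, and hence on its quotient $\Gal(E/K')$.

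On the other hand, $\Gal(E/K')$ is also a quotient of $\Gal(\Gov_{K,T}/K')$, and the quotient map is $\Delta$-equivariant because $E/K$ is Galois. This quotient is a finite elementary abelian $p$-group, so it is a nonzero $\F_p[\Delta]$-module quotient whenever $E \neq K'$. By Lemma \ref{lem:governing-structure}(ii), $\Delta$ acts on $\Gal(\Gov_{K,T}/K')$ through the cyclotomic character, which is a nontrivial scalar because $\mu_p \not\subset K$. A nontrivial scalar has no nonzero trivial quotient. Combining this with the previous paragraph, we get $\Gal(E/K') = 0$, and therefore $E = K'$.

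The one step needing care is the action of $\Delta$ on a Galois group over $K'$. This action is defined by conjugation via lifts to $\Gal(\cdot/K)$. It is well defined because the groups involved are abelian: $\Gal(E/K')$ is abelian as a quotient of the abelian group $\Gal(\Gov_{K,T}/K')$. For the $FK'$ side we do not even need abelianness, since in the direct product the action by the chosen lifts of $\Delta$ is literally trivial. The argument should also record that a nontrivial scalar action of $\Delta$ descends to quotients, so that no nonzero quotient can carry the trivial action.
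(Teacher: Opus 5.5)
Your proof is correct and follows essentially the same route as the paper: the decomposition $\Gal(FK'/K)\cong\Gal(F/K)\times\Delta$ forces a trivial $\Delta$-action on $\Gal(E/K')$, while Lemma \ref{lem:governing-structure}(ii) forces a nontrivial cyclotomic scalar action, so $E=K'$. Your remark that the lifts $(1,\delta)$ act literally trivially on the possibly nonabelian $\Gal(FK'/K')$, and that the action becomes lift-independent on the abelian quotient $\Gal(E/K')$, makes explicit a point the paper leaves implicit.
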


\begin{proof}  
This is \cite[Lemma 2.2]{MMRT}.  
We give the argument since it explains where $\mu_p\not\subset K$ is used.
The degrees $[F:K]$ and $[K':K]$ are relatively prime, so
$F\cap K'=K$.  Since both extensions are Galois,
\[
\Gal(FK'/K) \cong \Gal(F/K)\times\Delta.
\]
Therefore, $\Delta$ acts trivially by conjugation on
$\Gal(FK'/K')$ and on all of its quotients.
Let 
$I \coloneqq FK'\cap\operatorname{Gov}_{K,T}$. 
If $I\ne K'$, then $\Gal(I/K')$ is a nonzero elementary
abelian $p$-group.  
As a quotient of
$\Gal(FK'/K')$, it has trivial $\Delta$-action.  
As a quotient of $\Gal(\operatorname{Gov}_{K,T}/K')$, it has
cyclotomic scalar $\Delta$-action by
Lemma~\ref{lem:governing-structure}.  
Since $\mu_p\not\subset K$, this scalar character is nontrivial.  
A nonzero $\F_p$-space cannot
simultaneously carry the trivial action and a nontrivial scalar action.
Hence $I=K'$.
\end{proof}

%%%%%%%%%%%%%%%%%%%%%%%%

\begin{proposition}
\label{prop:fresh-ramified-character}
Let $T$ be a finite set of finite primes containing all primes above $p$,
let $F/K$ be a finite Galois $p$-extension, and let
$g \in \Gal(F/K)$.  
Then there exist a finite prime
$v\notin T$ and a class 
$c\in H^1(G_K,\F_p)$ 
such that:
\begin{enumerate}[label=\textnormal{(\roman*)}]
\item $v\nmid p$ and $\mu_p\subset K_v$,
\item the Frobenius conjugacy class of $v$ in $F/K$ is the conjugacy
class of $g$,
\item the cyclic degree-$p$ extension determined by $c$ is ramified exactly
at $v$,
\item $\operatorname{loc}_{q}(c)=0$ for every $q \in T$. 
\end{enumerate}
If $F_0\subset F$ is a Galois subextension on which $g$ restricts to the
identity, then $v$ splits completely in $F_0/K$.
\end{proposition}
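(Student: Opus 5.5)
Fix $T$, $F/K$ and $g$. We want a prime $v$ whose Frobenius in $F/K$ is conjugate to $g$ and which splits completely in $\Gov_{K,T}/K$. Corollary \ref{cor:gras-one-prime} then supplies the class $c$.

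First consider the compositum $M \coloneqq F\cdot\Gov_{K,T}$. Since $\Gov_{K,T}\supseteq K'$, it contains $FK'$, and $M/K$ is a finite Galois extension. Lemma \ref{lem:governing-disjointness} gives $FK'\cap\Gov_{K,T}=K'$. Hence
\[
\Gal(M/K')\cong\Gal(FK'/K')\times\Gal(\Gov_{K,T}/K').
\]
Because $[F:K]$ is a power of $p$ and $[K':K]$ is prime to $p$, we have $F\cap K'=K$. Therefore restriction identifies $\Gal(FK'/K')$ with $\Gal(F/K)$. Let $\tau\in\Gal(M/K')\subset\Gal(M/K)$ be the element that restricts to $g$ on $F$ and to the identity on $\Gov_{K,T}$.

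Next apply Chebotarev's density theorem to $M/K$. It gives infinitely many finite primes $v$ of $K$, unramified in $M$, with Frobenius class containing $\tau$. We discard the finitely many primes in $T$, the primes above $p$, and the primes ramified in $M$.

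Now read off the properties of such a $v$.
\begin{itemize}
\item The Frobenius of $v$ in $\Gov_{K,T}/K$ is the restriction of $\tau$, which is trivial. So $v$ splits completely in $\Gov_{K,T}$, and in particular in $K'$. This gives $\mu_p\subset K_v$ and proves (i).
\item The Frobenius class of $v$ in $F/K$ is the class of $\tau|_F=g$. This proves (ii).
\item Corollary \ref{cor:gras-one-prime} applies because $v\notin T$, $v\nmid p$, and $v$ splits in $\Gov_{K,T}$. It produces a cyclic degree-$p$ extension $L/K$ ramified exactly at $v$ in which every prime of $T$ splits completely.
\end{itemize}
Let $c\in H^1(G_K,\F_p)$ be a nonzero class cutting out $L$. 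Then (iii) holds by construction. Condition (iv) holds because complete splitting of $q\in T$ in $L$ means $G_{K_q}$ lies in $\ker c$, so $\loc_q(c)=0$.

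For the final assertion, note that Frobenius elements are compatible with restriction to Galois subextensions. So the Frobenius of $v$ in $F_0/K$ is conjugate to $g|_{F_0}=\id$, and hence $v$ splits completely in $F_0$.

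The main obstacle is the simultaneous prescription of Frobenius in $F$ and triviality in the governing field. This is exactly what the linear disjointness of Lemma \ref{lem:governing-disjointness} provides, and it is where $\mu_p\not\subset K$ is used. The point needing care is the product decomposition above: one must check that $\tau$ genuinely lies in $\Gal(M/K)$, which holds because both components lie over $K'$ and restrict to the identity there. Everything else is routine.
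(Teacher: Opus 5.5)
Your proposal is correct and follows essentially the same route as the paper. Both arguments lift $g$ to $\Gal(FK'/K')$ using $F\cap K'=K$, and use the linear disjointness of Lemma \ref{lem:governing-disjointness} to pick an element of the compositum that is $g$ on $F$ and trivial on $\Gov_{K,T}$; they then apply Chebotarev and invoke Corollary \ref{cor:gras-one-prime}, and your explicit derivation of (iv) from the complete splitting of the primes of $T$ in $L$ only spells out a step the paper leaves implicit.
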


\begin{proof}
Since $F\cap K'=K$, there is a unique
$\widetilde g \in \Gal(FK'/K')$ 
whose restriction to $F$ is $g$.  
By Lemma \ref{lem:governing-disjointness}, restriction gives
\[
\Gal \bigl(FK'\operatorname{Gov}_{K,T}/K'\bigr)
\cong
\Gal(FK'/K')
\times
\Gal(\operatorname{Gov}_{K,T}/K').
\]
Hence the pair $(\widetilde g,1)$ defines an element $\gamma$ of this
Galois group.  
We regard $\gamma$ as an element of the Galois group of the
same compositum over $K$.  
By Chebotarev's density theorem (see for example \cite[Theorem 9.1.3]{NSW}), 
there are infinitely many primes outside $T$ and above residue characteristic
prime to $p$ whose Frobenius conjugacy class is the conjugacy class of
$\gamma$.   
We choose one and call it $v$.
%
%%%
%
The restriction of $\gamma$ to $K'$ is trivial, so $v$ splits completely
in $K'/K$ and therefore $\mu_p\subset K_v$.  Its restriction to $F$ is
conjugate to $g$, proving (ii).  
Its restriction to $\Gov_{K,T}$ is trivial, and its restriction to $K'$ is trivial. 
Hence $v$ splits completely in $\Gov_{K,T}/K$.   
Thus, $v$ satisfies the assumption of Corollary \ref{cor:gras-one-prime}, 
which gives a cyclic extension $L/K$ of degree $p$, ramified
exactly at $v$, in which every prime of $T$ splits completely.  
The associated character
$c \colon G_K \to \Gal(L/K) \cong \Z/p\Z$   
has properties (iii) and (iv).
Finally, if $g$ restricts trivially to $F_0$, then every conjugate of $g$
does too, so the Frobenius of $v$ in $F_0/K$ is trivial.  Hence $v$
splits completely in $F_0$.
\end{proof}

%%%%%%%%%%%%%%%%%%%%%%%%%%%

The next lemma shows that we can find a cohomology class which is orthogonal to a chosen subspace. 

\begin{lemma}
\label{lem:fresh-cup-orthogonality}
Let $V\subset H^1(G_K,\F_p)$ be a finite-dimensional vector subspace, 
and let $T$ contain the primes above $p$ and every finite prime at which some element
of $V$ is ramified.  
In Proposition \ref{prop:fresh-ramified-character}, suppose
that $g \in \Gal(F/K)$ 
restricts trivially to the elementary abelian $p$-extension determined by $V$.  
Then
$c\cup V = 0$.
\end{lemma}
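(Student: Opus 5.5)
We need to show that $c\cup a=0$ in $H^2(G_K,\F_p)$ for every $a\in V$. By Lemma \ref{lem:Sha2}, $\Sha^2(K,\F_p)=0$, so it suffices to show that $\loc_w(c\cup a)=c_w\cup a_w=0$ for every place $w$ of $K$. We will check this place by place, splitting into three cases: archimedean places, places in $T$ together with all places other than $v$, and the place $v$ itself.

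\emph{Archimedean places.} Since $p$ is odd, $H^2(G_{K_w},\F_p)=0$ and there is nothing to prove.

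\emph{Finite places $w\in T$.} Here $c_w=0$ by property (iv) of Proposition \ref{prop:fresh-ramified-character}, so the cup product vanishes.

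\emph{Finite places $w\notin T\cup\{v\}$.} By (iii), $c$ is unramified at $w$. Since $T$ contains every prime at which some element of $V$ ramifies, $a$ is unramified at $w$ as well. Both local classes are then inflated from $G_{K_w}/I_w\cong\hat{\Z}$, whose $\F_p$-cohomological dimension is one, so $c_w\cup a_w=0$. This is the same argument as in the proof of Proposition \ref{prop:relation-approx}.

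\emph{The place $v$.} This is the only place where something must be used, and we expect it to be the main point. We apply the final clause of Proposition \ref{prop:fresh-ramified-character} with $F_0$ the elementary abelian $p$-extension $F_V$ cut out by $V$. This field is Galois over $K$, and by hypothesis $g$ restricts trivially to it, so $v$ splits completely in $F_V/K$. Hence every $a\in V$ satisfies $a_v=0$: its kernel contains the decomposition group at $v$. Therefore $c_v\cup a_v=0$, even though $c_v$ is ramified.

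One point needs care. The conclusion of that clause requires $F_V\subseteq F$, which is implicit in the statement that $g$ restricts trivially to the extension determined by $V$. We will note this explicitly. Combining the four cases, $c\cup a$ lies in $\Sha^2(K,\F_p)=0$ for every $a\in V$, and by bilinearity $c\cup V=0$.
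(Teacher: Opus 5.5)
Your proposal is correct and follows essentially the same route as the paper: a place-by-place check ($a_v=0$ because $v$ splits completely in the extension cut out by $V$, $c_q=0$ for $q\in T$, both classes unramified at all remaining finite places, archimedean places trivial since $p$ is odd), followed by the vanishing of $\Sha^2(K,\F_p)$. The only difference is cosmetic: you cite Lemma \ref{lem:Sha2} directly, whereas the paper goes through Poitou--Tate duality and Lemma \ref{lem:cyclotomic-vanishing} with $M=\F_p$. Your remark that $F_V\subseteq F$ is implicitly required is fair, and it holds in both places where the lemma is applied.
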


\begin{proof}
The final assertion of Proposition \ref{prop:fresh-ramified-character} implies
that $v$ splits completely in the elementary extension cut out by $V$. 
Thus, 
$\operatorname{loc}_v(V)=0$. 
Fix $a\in V$.  We show that every localisation of $a \cup c$ is zero.  
At $v$, this follows from $a_v=0$.  
At a prime $q \in T$, it follows from $c_{q}=0$.  
At every remaining finite prime, both $a$ and $c$ are unramified.  
Their cup product is zero because both classes
inflate from the maximal unramified pro-$p$ quotient, which has $p$-cohomological dimension one.  
Since $p$ is odd, archimedean places contribute nothing.
Thus, $a \cup c$ lies in $\Sha^2(K,\F_p)$.  
Poitou--Tate duality identifies this group with the dual of $\Sha^1(K,\mu_p)$.  
The latter vanishes by Lemma~\ref{lem:cyclotomic-vanishing}, applied with the trivial
module $M=\F_p$.  
Therefore, we get  $a\cup c=0$ globally. 
Since $p$ is odd, graded commutativity implies $c \cup a = 0$.  
Hence $c \cup V = 0$. 
\end{proof}

%%%%%%%%%%%%%%%%%

%
Now let $V\subset H^1(G_K,\F_p)$ be a finite-dimensional vector subspace, 
and let $\mathbf E^{(3)}_V \colon   G_K \xrightarrow{\chi_V}  V^\vee \twoheadleftarrow \UA_3(V)$ 
be the associated embedding problem. % studied in Section \ref{sec:finite-problem}. 
For a solution
$\rho \colon G_K \to \UA_3(V)$, 
we write 
$\brr \colon G_K \to \UA_2(V)$ 
for the composition with the canonical surjection 
$\UA_3(V) \twoheadrightarrow \UA_2(V)$ 
and call $\brr$ the \emph{quadratic quotient} of $\rho$.  
By Lemma \ref{lemma:quadratic_lift_exists}, 
there is a cochain $b_{\rho} \in \Ch^1(G_K, R_V^\vee)$ 
such that  
\[
\brr(\sigma)
= 1 + \chi_V(\sigma) + b_\rho(\sigma). 
\]
\begin{remark}\label{rem:b_rho_hom}
We note that $b_{\rho}$ restricts to a homomorphism on $\ker\chi_V$ 
since $\delta b_{\rho} = - \chi_V \cup \chi_V$ and hence $\delta b_{\rho}|_{\ker\chi_V} = 0$. 
Moreover, for $\sigma \in \ker(\chi_V)$, 
we have $\brr (\sigma) = 1 + b_\rho(\sigma)$. 
Thus, $\brr|_{\ker(\chi_V)} = 1$ if and only if $b_\rho |_{\ker(\chi_V)} = 0$.   
\end{remark}
We now show  that a solution whose quadratic quotient is nontrivial on $\ker \chi_V$ 
cannot extend through every finite-dimensional enlargement. 

\begin{proposition}
\label{prop:killing-lemma}
Let $V\subset H^1(G_K,\F_p)$ be a finite-dimensional vector subspace and let
$\rho \in\Sol(\mathbf E^{(3)}_V)$.  
If $\brr|_{\ker\chi_V}\ne1$, 
then there is a class $c \in H^1(G_K,\F_p)$ such that 
$V \subsetneq W \coloneqq V + \F_p c$ 
and 
$\rho \notin \im  (\Sol(\mathbf E^{(3)}_W) \to \Sol(\mathbf E^{(3)}_V))$. 
\end{proposition}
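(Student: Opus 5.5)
The plan is to turn the nontriviality of $\brr$ on $\ker\chi_V$ into a nonzero unramified local class at a fresh prime $v$. We then choose $c$ ramified only at $v$, so that any lift over $W=V+\F_p c$ would force the cup product of that unramified class with $c_v$ to vanish. The contradiction comes from nondegeneracy of the local Tate pairing.

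\emph{Step 1: a nonzero homomorphism on $\ker\chi_V$.} By Remark \ref{rem:b_rho_hom}, $b_\rho|_{\ker\chi_V}$ is a nonzero continuous homomorphism $\ker\chi_V\to P_2(V)=R_V^\vee$. Choose $\sigma\in\ker\chi_V$ and $r\in R_V$ with $b_\rho(\sigma)(r)\ne0$.

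Let $F/K$ be the finite Galois $p$-extension cut out by $\brr$; its Galois group is the image of $\brr$ in the $p$-group $\UA_2(V)$. Then $F$ contains the elementary extension $F_V$ cut out by $V$. Let $g$ be the image of $\sigma$ in $\Gal(F/K)$, so $g$ restricts trivially to $F_V$.

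Let $T$ consist of the primes above $p$ together with all finite primes ramified in $F$. This includes every prime at which some element of $V$ ramifies.

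\emph{Step 2: the fresh prime and the new class.} Apply Proposition \ref{prop:fresh-ramified-character} to $(T,F,g)$ to obtain $v\notin T$ and $c$ with the following properties: $\mu_p\subset K_v$, $\Frob_v$ is conjugate to $g$ in $\Gal(F/K)$, $c$ is ramified exactly at $v$, and $c_q=0$ for $q\in T$.

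By Lemma \ref{lem:fresh-cup-orthogonality}, $c\cup V=0$, and $\loc_v(V)=0$. Since $c$ is ramified and every element of $V$ is unramified at $v$, we have $c\notin V$, so $V\subsetneq W$. Because $c\cup V=V\cup c=0$, we get $V\otimes c\subset R_W$ and $c\otimes V\subset R_W$. Hence $r\otimes c\in (R_W\otimes W)\cap(W\otimes R_W)=\Krel_3(W)$.

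\emph{Step 3: the lifting equation at $v$.} Suppose $\rho_W\in\Sol(\mathbf E^{(3)}_W)$ restricts to $\rho$. Write $\rho_W=1+\chi_W+b_2^W+b_3^W$ as in Lemma \ref{lemma:nic_lift_cochain}. Compatibility gives $b_2^W(\cdot)(r)=b_\rho(\cdot)(r)$. Evaluate \eqref{eq:MC} with $r=3$ on $r\otimes c$, using $\Delta_{2,1}(r\otimes c)=r\otimes c$ and $\Delta_{1,2}(r\otimes c)=\sum_i x_i\otimes(y_i\otimes c)$ where $r=\sum_i x_i\otimes y_i$. This shows that
\[
b_\rho(\cdot)(r)\cup c+\sum_i x_i\cup b_2^W(\cdot)(y_i\otimes c)
\]
is a coboundary.

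Now restrict to a decomposition group $G_{K_v}$, chosen (by conjugating the embedding) so that its Frobenius maps to $g$ itself. Each $x_i$ restricts to the zero cochain on $G_{K_v}$, because $\loc_v(V)=0$ and $\Zh^1=H^1$ for $\F_p$-coefficients. Also $\chi_V|_{G_{K_v}}=0$, so $G_{K_v}\subset\ker\chi_V$ and $\beta\coloneqq b_\rho(\cdot)(r)|_{G_{K_v}}$ is a homomorphism. This homomorphism is unramified, since $\brr$ factors through $\Gal(F/K)$ and $v\notin T$. Its value on $\Frob_v$ is $b_\rho(\sigma)(r)\ne0$, so $\beta$ is a nonzero unramified class. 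The displayed identity then forces $\beta\cup c_v=0$ in $H^2(G_{K_v},\F_p)$.

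\emph{Step 4: contradiction from local duality.} Since $\mu_p\subset K_v$ and $v\nmid p$, local Tate duality makes $H^1\times H^1\to H^2\cong\F_p$ perfect. The unramified line is its own annihilator, so a nonzero unramified class pairs nontrivially with any ramified class. As $c_v$ is ramified, $\beta\cup c_v\ne0$, a contradiction. Hence $\rho$ does not lie in the image of $\Sol(\mathbf E^{(3)}_W)$.

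\emph{Main obstacle.} The delicate step is Step 3's identification of $\beta(\Frob_v)$ with $b_\rho(\sigma)(r)$. Proposition \ref{prop:fresh-ramified-character} only prescribes the Frobenius up to conjugacy, and conjugating $\sigma$ by elements outside $\ker\chi_V$ changes $b_\rho(\sigma)$ through the unipotent action of $V^\vee$ on $P_2(V)$. I would first handle this by choosing the decomposition group (equivalently, the prime of $F$ above $v$) so that its Frobenius is exactly $g$. Failing that, I would note that $\brr(\ker\chi_V)\subset N_{2,V}$ is central in $\UA_2(V)$, because products into degree $3$ vanish there. Then $b_\rho$ on $\ker\chi_V$ is conjugation invariant, and the value on $\Frob_v$ is independent of the choice.
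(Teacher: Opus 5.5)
Your proposal is correct and follows essentially the same route as the paper: a fresh prime $v$ with Frobenius $g$ from Proposition \ref{prop:fresh-ramified-character}, the tensor $r\otimes c\in\Krel_3(W)$, localisation at $v$ of the degree-three lifting equation, and nondegeneracy of the local Tate pairing. The differences are cosmetic: you cut out $F$ by $\brr$ rather than $\rho$, and you put its ramified primes into $T$. Your conjugacy worry is settled exactly by your second remark, since $1+P_2(V)$ is central in $\UA_2(V)$, so $V^\vee$ does not act nontrivially on $P_2(V)$ there.
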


\begin{proof}
Let $b_{\rho} \in \Ch^1(G_K, R_V^\vee)$ be the quadratic coordinate of $\rho$. 
Since we assume $\brr|_{\ker\chi_V} \ne 1$, 
we have $b_{\rho}|_{\ker\chi_V} \ne 0$ by Remark \ref{rem:b_rho_hom}. 
Hence we can choose $\sigma \in \ker \chi_V$ with $b_\rho(\sigma)\ne0$. 
We then choose 
$r \in R_V$ such that  $b_\rho(\sigma)(r) \ne 0$.   
Let $F/K$ be the finite Galois $p$-extension such that $\Gal(F/K) = \im \rho$. 
We identify $\Gal(F/K)$ with the image of $\rho$,  and put $g=\rho(\sigma)$.  
Let $T$ contain the primes above $p$ and all finite primes at which some element of $V$ is ramified.  
%
%%%%
%
Now we apply Proposition \ref{prop:fresh-ramified-character} to $(F,g,T)$.
Let $v\notin T$ and $c\in H^1(G_K,\F_p)$ be the resulting finite prime and class, respectively. 
The image of $g$ in $V^\vee$ is $\chi_V(\sigma)=0$. 
Therefore, $g$ restricts  trivially to the elementary abelian extension determined by $V$, 
and the chosen prime $v$ splits completely in that extension.  
Hence $\loc_v(V)=0$ and $c \cup V = 0$ 
by Lemma \ref{lem:fresh-cup-orthogonality}.  
Moreover, $c \notin V$, since $\loc_v(V) = 0$ whereas $c_v$ is ramified and nonzero. 
%
%%%
%
The prime $v$ is unramified in $F/K$, and its Frobenius in
$\Gal(F/K)$ is conjugate to $g$. 
Since $v$ splits completely in the extension determined by $V$, 
we have $\chi_V|_{G_{K_v}} = 0$. 
By Remark \ref{rem:b_rho_hom}, 
this implies that  
the restriction $b_\rho(r)|_{G_{K_v}}$ is a $1$-cocycle in $\Ch^1(G_{K_v},\F_p)$.  
To simplify the notation, we write 
$d_v \coloneqq b_\rho(r)|_{G_{K_v}} \in H^1(G_{K_v},\F_p)$.   
Since $v$ is unramified in $F/K$, 
the character $d_v$ is unramified. 
The element $g = \rho(\sigma)$ has zero degree one coordinate, and conjugating it by an element of $\UA_3(V)$ 
changes its degree two coordinate only by commutators of total degree at least three. 
Thus, the image of a Frobenius element at $v$ has quadratic coordinate $b_\rho(\sigma)$,  
and hence $d_v(\Frob_v) = b_\rho(\sigma)(r) \neq 0$.
Thus, $d_v$ is a nonzero unramified character.
%
%%%%%%%%%
%
By (i) in Proposition \ref{prop:fresh-ramified-character}, 
we have $v\nmid p$ and $\mu_p\subset K_v$.  
By local Tate duality, the cup-product is non-degenerate and makes
$H^1(G_{K_v},\F_p)$ a two-dimensional symplectic $\F_p$-space.
Its unramified line is isotropic, hence equals its own orthogonal
complement.  
The class $d_v$ is a nonzero unramified class, whereas $c_v$
is ramified and therefore does not lie on the unramified line.  
This shows $d_v \cup c_v \ne 0$.  

%
%%%%%%%%%%%%%%

Now we set $W \coloneqq V+\F_p c$  and write
$r = \sum_i s_i\otimes t_i$ for $s_i, t_i\in V$. 
Since $c \cup V=0$, 
each $t_i\otimes c$ belongs to $R_W$ by graded-commutativity as $p$ is odd, 
and hence $T_r \coloneqq r\otimes c = \sum_i s_i\otimes t_i\otimes c \in W \otimes R_W$. 
Since $r \in R_V \subset R_W$, 
we have $r \otimes c \in R_W \otimes W$. 
This shows 
$T_r \in (R_W\otimes W)\cap(W\otimes R_W) = K_3(W)$. 
Now suppose that there is a solution $\rho_W$ of $\mathbf E^{(3)}_W$ 
which maps to $\rho$.  
We write $\rho_W = 1+\chi_W+b_{2,W}+b_{3,W}$ 
with $b_{i,W} \in \Ch^1(G_K,P_i(W))$ for $i=2,3$.  
By Lemma \ref{lemma:nic_lift_cochain}, 
we have $\delta b_{3,W} = \omega(b_{2,W}) \coloneqq -\chi_W \cup b_{2,W} - b_{2,W} \cup \chi_W \in \Zh^2(G_K,P_3(W))$.  
Evaluating this identity on $T_r\in K_3(W)$ and taking cohomology gives 
\[ 
[\omega(b_{2,W}) (T_r)] = 0 ~ \text{in} ~ H^2(G_K,\F_p). 
\]
Hence we have $\loc_v([\omega(b_{2,W}) (T_r)]) = 0 $ in $H^2(G_{K_v},\F_p)$ as well. 
Now we compute the class $\loc_v([\omega(b_{2,W}) (T_r)])$ in a different way. 
Since $T_r = \sum_i s_i\otimes t_i\otimes c \in W \otimes R_W$,  
we get 
$(\chi_W \cup b_{2,W} )(T_r) = \sum_i s_i \cup b_{2,W}(t_i\otimes c)$. 
Since $s_i \in V$ and $\loc_v(V)=0$, 
the localisation of $(\chi_W \cup b_{2,W} )(T_r)$ at $v$ is zero.  
As $T_r = \sum_i s_i\otimes t_i\otimes c \in R_W \otimes W$,   
we get 
$(b_{2,W} \cup \chi_W )(T_r) = b_{2,W}(r)  \cup c$. 
Since $\rho_W$ maps to $\rho$ by assumption, 
the restriction of $b_{2,W}$ to $R_V$ is $b_\rho$. 
Thus, we get 
$\loc_v([\omega(b_{2,W}) (T_r)]) = - b_\rho(r)|_{G_{K_v}} \cup c_v 
= - d_v \cup c_v$.    
Thus, the existence of $\rho_W$ would force the cohomology class  $d_v \cup c_v$ to vanish.  
This contradicts our previous computation of $d_v \cup c_v \ne 0$. 
Hence $\rho$ does not lift to a solution of $\mathbf E^{(3)}_W$. 
\end{proof}

%%%%%%%%%%

Proposition \ref{prop:killing-lemma}  shows that a solution which lifts through every finite 
enlargement must have a special quadratic quotient.  
The next two lemmas rule out that remaining possibility after a suitable  two-dimensional choice of $V$.

%%%%%%%%%%%%%%%%%

\begin{lemma}
\label{lem:isotropic-pair}
There exist linearly independent
$a,b\in H^1(G_K,\F_p)$  
with $a \cup b=0$.
\end{lemma}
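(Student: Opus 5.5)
The plan is to build $a$ and $b$ directly from the arithmetic tools of this section, Proposition \ref{prop:fresh-ramified-character} and Lemma \ref{lem:fresh-cup-orthogonality}, applied to the one-dimensional space $V=\F_p a$.

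First I need a single nonzero class $a\in H^1(G_K,\F_p)$. The group $H^1(G_K,\F_p)$ is infinite-dimensional; for instance, Corollary \ref{cor:gras-one-prime} with $T$ equal to the primes above $p$ gives one. To apply it, choose by Chebotarev a prime $v\nmid p$ that splits completely in the finite extension $\Gov_{K,T}/K$. The corollary then produces a cyclic degree-$p$ extension ramified exactly at $v$, and its character is the class $a\neq 0$.

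Next, set $V=\F_p a$. Let $T$ consist of the primes above $p$ together with the finitely many primes at which $a$ is ramified. Let $F/K$ be the degree-$p$ extension cut out by $a$, and take $g=1\in\Gal(F/K)$. Since $g$ restricts trivially to the extension determined by $V$, Proposition \ref{prop:fresh-ramified-character} produces a prime $v'\notin T$ and a class $b=c$ that is ramified exactly at $v'$ and satisfies $\loc_q(b)=0$ for $q\in T$. Lemma \ref{lem:fresh-cup-orthogonality} then gives $b\cup V=0$, so in particular $a\cup b=0$, and by graded commutativity also $b\cup a=0$.

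It remains to check linear independence. The prime $v'$ splits completely in $F$, so $\loc_{v'}(a)=0$. On the other hand, $b$ is ramified at $v'$, so $\loc_{v'}(b)\neq 0$. If $b=\lambda a$ for some $\lambda\in\F_p$, then localising at $v'$ would give $\loc_{v'}(b)=0$, a contradiction. Hence $a$ and $b$ are linearly independent.

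I expect no real obstacle here. The only subtle input is the vanishing of $\Sha^2(K,\F_p)$, which is used inside Lemma \ref{lem:fresh-cup-orthogonality} and requires $\mu_p\not\subset K$, as the section assumes throughout. If one prefers to avoid the Gras machinery for the first class $a$, any nonzero character will do, for example one coming from a $\Z_p$-extension such as the cyclotomic one.
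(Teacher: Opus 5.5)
Your proposal is correct and follows the paper's proof essentially step for step. The paper obtains $a$ from Proposition~\ref{prop:fresh-ramified-character} with $F=K$ and $g=1$, which is exactly your direct Chebotarev choice of a prime splitting completely in $\Gov_{K,T}/K$ followed by Corollary~\ref{cor:gras-one-prime}. It then applies the proposition again with $F=K(a)$ and $g=1$, invokes Lemma~\ref{lem:fresh-cup-orthogonality} for $V=\F_p a$, and deduces linear independence from $a_{v'}=0$ while $b$ is ramified at $v'$, just as you do. Your side remark is also valid: any nonzero $a$, for instance the first layer of the cyclotomic $\Z_p$-extension, would serve for the first step.
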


\begin{proof}
We apply Proposition \ref{prop:fresh-ramified-character} first with the trivial
$p$-extension $F=K$, $g=1$, and $T$ equal to the set of primes above $p$.  
This gives a nonzero character $a$, ramified at one tame prime.  
We enlarge  $T$ to contain the primes above $p$ and every prime at which $a$ is ramified. 
Now we apply Proposition \ref{prop:fresh-ramified-character} again with
$F=K(a)$ and $g=1$.  
The resulting character $b$ is ramified at a new
prime $v$ which splits completely in $K(a)$, and it is trivial at the
primes of $T$.  
Now we apply Lemma \ref{lem:fresh-cup-orthogonality} to $V = \F_p a$ 
and get $b \cup a = 0$ and hence $a \cup b = 0$ by graded commutativity as $p$ is odd.  
The classes are linearly independent since $b$ is ramified at $v$ while $a_v = 0$.
\end{proof}

%%%%%%%%%%%%%%%%%

\begin{lemma}
\label{lem:quadratic-nonsplit}
Suppose $V$ contains linearly independent classes $a,b$ such that
$a \cup b=0$.  
Then the natural surjection
$\UA_2(V)\twoheadrightarrow V^\vee$  
has no group-theoretic section.
\end{lemma}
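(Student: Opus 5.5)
The plan is to show that any group-theoretic section would force two commuting elements of $\UA_2(V)$ to have a nontrivial commutator. By definition $\UA_2(V)=1+\bigl(V^\vee\oplus P_2(V)\bigr)$ with $P_2(V)=R_V^\vee$. The only nonzero product in $\uA_2(V)$ is $P_1(V)\times P_1(V)\to P_2(V)$, given by $(xx')(r)=(x\otimes x')(r)$ for $r\in R_V\subset V\otimes V$, as in \eqref{eq:product_on_P}.

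First I will record the commutator formula. For $g=1+x+y$ and $h=1+x'+y'$ with $x,x'\in V^\vee$ and $y,y'\in P_2(V)$, we have $gh=1+x+x'+y+y'+xx'$ and $hg=1+x+x'+y+y'+x'x$. Since $P_2(V)$ is central and squares to zero, $gh=hg$ holds if and only if $xx'=x'x$ in $P_2(V)$, i.e.
\[
(x\otimes x'-x'\otimes x)(r)=0 \quad\text{for all } r\in R_V .
\]
I will not need an explicit formula for $[g,h]$; it suffices that commuting forces $xx'=x'x$.

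Now suppose $s\colon V^\vee\to\UA_2(V)$ is a group-theoretic section. Then $s(x)=1+x+\varphi(x)$ for some map $\varphi\colon V^\vee\to P_2(V)$. Since $V^\vee$ is abelian, $s(x)$ and $s(x')$ commute for all $x,x'$, so $xx'=x'x$ for all $x,x'\in V^\vee$. Continuity of $s$ plays no role here.

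To get a contradiction, I use the hypothesis on $a$ and $b$. Since $a\cup b=0$, we have $a\otimes b\in R_V=\ker(\mu_V)$. Because $a,b$ are linearly independent, I choose $x,x'\in V^\vee=\Hom_{\F_p}(V,\F_p)$ with $x(a)=1$, $x(b)=0$, $x'(a)=0$, $x'(b)=1$. To do this, extend $\{a,b\}$ to a basis of $V$; this needs only Zorn's lemma if $V$ is infinite-dimensional. Then
\[
(x\otimes x'-x'\otimes x)(a\otimes b)=x(a)x'(b)-x'(a)x(b)=1\neq 0,
\]
which contradicts $xx'=x'x$. Hence no section exists.

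There is no serious obstacle here. The only points needing care are that $P_2(V)$ is central in $\UA_2(V)$, which holds because all products of total degree $>2$ vanish, and that $a\otimes b$ itself lies in $R_V$. The latter is immediate from $a\cup b=0$, without any symmetrisation, so the odd-prime hypothesis is not needed for this step.
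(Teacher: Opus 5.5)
Your proposal is correct and follows essentially the same route as the paper. Under a section, lifts of the functionals $x,x'$ dual to $a,b$ would have to commute, but their degree-two discrepancy $xx'-x'x$ evaluates to $1$ on $a\otimes b\in R_V$. Your direct comparison of $gh$ with $hg$ is just a slightly more explicit version of the paper's commutator-coordinate argument, which relies on the centrality of $P_2(V)$ in $\UA_2(V)$.
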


\begin{proof}
Since $a,b$ are linearly independent, 
we can choose $x,y\in V^\vee$ with
$x(a) = y(b) = 1$, 
and 
$x(b) = y(a) = 0$. 
Since $a \cup b=0$, 
the tensor $a\otimes b$ belongs to $R_V$. 
Let $\widetilde x = 1 + x+x_2$ and $\widetilde y = 1+y+y_2$ be arbitrary lifts to $\UA_2(V)$ 
with $x_2,y_2 \in P_2(V)$. 
Since the subgroup $P_2(V)$ is central in $\UA_2(V)$, the degree two coordinate of the commutator 
$[\widetilde x,\widetilde y]$ is $xy-yx$. %\in P_2(V)$. 
Evaluating this coordinate on $a\otimes b$ gives
$ (xy-yx)(a\otimes b)  = x(a)y(b)-y(a)x(b) = 1$.  
Thus, $[\widetilde x,\widetilde y] \ne 1$.   
Since $V^\vee$ is abelian, this shows that there cannot exist a section $V^\vee \to \UA_2(V)$. 
\end{proof}

%%%%%%%%%%%%%%%%%

Now we are ready to prove our main result. 

\begin{theorem}
\label{thm:numberfield-nonformal}
Let $K$ be a number field, let $p$ be an odd prime number, and assume
$\mu_p\not\subset K$.  
Then
$\Cb(G_K,\F_p)$ is not $A_3$-formal.  
\end{theorem}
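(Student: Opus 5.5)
We argue by contradiction, using the characterisation of genuine $A_3$-formality in Theorem \ref{thm:compatible-defining-systems}. Since $p$ is odd, $H^i(G_K,\F_p)=0$ for $i\ge3$, so that theorem applies with $n=3$. Suppose $\Cb(G_K,\F_p)$ is $A_3$-formal. Then there is a compatible family $(\rho_V)_{V}\in\varprojlim_{V\in\mathcal F(H^1)}\Sol(\mathbf E^{(3)}_V)$. The key feature of such a family is this: for every $V\subset W$ in $\mathcal F(H^1)$, the component $\rho_V$ lies in the image of $\Sol(\mathbf E^{(3)}_W)\to\Sol(\mathbf E^{(3)}_V)$, since it is the image of $\rho_W$.

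By Lemma \ref{lem:isotropic-pair}, we choose linearly independent $a,b\in H^1(G_K,\F_p)$ with $a\cup b=0$, and we set $V\coloneqq\F_p a+\F_p b$. Let $\rho\coloneqq\rho_V$, with quadratic quotient $\brr\colon G_K\to\UA_2(V)$. There are two cases, according to whether $\brr$ is trivial on $\ker\chi_V$.

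\emph{Case 1: $\brr|_{\ker\chi_V}\neq1$.} Proposition \ref{prop:killing-lemma} gives a class $c$ with $W\coloneqq V+\F_p c\in\mathcal F(H^1)$ such that $\rho$ is not in the image of $\Sol(\mathbf E^{(3)}_W)$. But $\rho=\rho_V=r_{WV}\circ\rho_W$ is such an image, which is a contradiction.

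\emph{Case 2: $\brr|_{\ker\chi_V}=1$.} Then $\brr$ factors through $G_K/\ker\chi_V$. By Lemma \ref{lemma:chi_V_continuous_surjective}, $\chi_V$ is surjective, so this quotient is identified with $V^\vee$ via $\chi_V$. Hence $\brr=s\circ\chi_V$ for a homomorphism $s\colon V^\vee\to\UA_2(V)$. Composing $s$ with the projection $\UA_2(V)\to V^\vee$ recovers $\chi_V$, so this composite is the identity on $V^\vee$. Thus $s$ is a group-theoretic section of $\UA_2(V)\twoheadrightarrow V^\vee$. This contradicts Lemma \ref{lem:quadratic-nonsplit}, because $V$ contains the isotropic independent pair $a,b$.

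Both cases are contradictory, so $\varprojlim_V\Sol(\mathbf E^{(3)}_V)=\varnothing$, and Theorem \ref{thm:compatible-defining-systems} shows that $\Cb(G_K,\F_p)$ is not $A_3$-formal.

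The main difficulty has already been isolated in Proposition \ref{prop:killing-lemma}. At this stage the only points to check carefully are that the transition maps of the projective system are given by composition with $r_{WV}$, so that compatibility really places $\rho_V$ in the image, and that the factorisation in Case 2 is legitimate. The latter needs $\ker\chi_V$ to be the kernel of a surjection onto the finite group $V^\vee$; the resulting homomorphism $s$ is then automatically continuous, since $V^\vee$ is finite and discrete.
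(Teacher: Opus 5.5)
Your proposal is correct and is the paper's proof. It uses the same isotropic two-dimensional $V$ from Lemma \ref{lem:isotropic-pair} and the same dichotomy on whether $\brr|_{\ker\chi_V}$ is trivial. The trivial case contradicts Lemma \ref{lem:quadratic-nonsplit}, the nontrivial case contradicts Proposition \ref{prop:killing-lemma}, and both are fed into Theorem \ref{thm:compatible-defining-systems}; the only differences are the order of the cases and your explicit remark that $s$ is continuous.
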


%%%%%%%%%%%%%%%

\begin{proof}
By Lemma \ref{lem:isotropic-pair}, 
we can choose linearly independent $a,b\in H^1(G_K,\F_p)$ with $a \cup b=0$,  
and we set 
$V \coloneqq  \F_p a + \F_p b$.  
Suppose that $\Cb(G_K,\F_p)$ were $A_3$-formal. 
By Theorem  \ref{thm:compatible-defining-systems}, 
there would be a compatible family of solutions 
\[
(\rho_W)_{W\in\mathcal F(H^1(G_K,\F_p))}
\in
\varprojlim_W\Sol(\mathbf E^{(3)}_W). 
\]
Let $\rho = \rho_V$ and let $\brr \colon G_K \to \UA_2(V)$ be its quadratic quotient.  
%
%%%%%
%
If $\brr|_{\ker\chi_V}=1$, then $\brr$ factors uniquely through 
$G_K/\ker(\chi_V)$. 
Since $\chi_V$ is surjective by Lemma \ref{lemma:chi_V_continuous_surjective}, 
$\chi_V$ induces an isomorphism  $G_K/\ker(\chi_V) \cong V^\vee$. 
Thus, there exists a group homomorphism
$s \colon V^\vee \to \UA_2(V)$ 
with $s \circ \chi_V = \brr$. 
Composition with the quotient map $q \colon \UA_2(V) \to V^\vee$ 
gives  $q \circ s \circ \chi_V = q \circ \brr = \chi_V$. 
Since $\chi_V$ is surjective by Lemma \ref{lemma:chi_V_continuous_surjective}, 
this implies $q \circ s  = \id_{V^\vee}$. 
This contradicts Lemma \ref{lem:quadratic-nonsplit}.  
Therefore, we must have 
$\brr|_{\ker\chi_V} \ne 1$. 
Proposition \ref{prop:killing-lemma} gives a finite-dimensional
$W \subset H^1(G_K,\F_p)$ with $V \subset W$ 
such that $\rho$ is not in the image of
$\Sol(\mathbf E^{(3)}_W)\longrightarrow\Sol(\mathbf E^{(3)}_V)$.  
But the $W$-component $\rho_W$ of the projective family maps to
$\rho_V = \rho$, 
contradicting the choice of $W$.  
Thus, $\Cb(G_K,\F_p)$ is not $A_3$-formal.
\end{proof}

%%%%%%%%%%%%%%

\end{document}